\begin{document}

%\selectlanguage{french}

%\xyoption{all}

\setlength{\parindent}{5mm}
\renewcommand{\leq}{\leqslant}
\renewcommand{\geq}{\geqslant}
\newcommand{\N}{\mathbb{N}}
\newcommand{\sph}{\mathbb{S}}
\newcommand{\Z}{\mathbb{Z}}
\newcommand{\R}{\mathbb{R}}
\newcommand{\C}{\mathbb{C}}
\newcommand{\F}{\mathbb{F}}
\newcommand{\g}{\mathfrak{g}}
\newcommand{\h}{\mathfrak{h}}
\newcommand{\K}{\mathbb{K}}
\newcommand{\RN}{\mathbb{R}^{2n}}
\newcommand{\ci}{c^{\infty}}
\newcommand{\derive}[2]{\frac{\partial{#1}}{\partial{#2}}}
\renewcommand{\S}{\mathbb{S}}
\renewcommand{\H}{\mathbb{H}}
\newcommand{\eps}{\varepsilon}
\newcommand{\chzrel}{c_{\mathrm{LR}}}

\theoremstyle{plain}
\newtheorem{theo}{Theorem}%[subsection]
\newtheorem{prop}[theo]{Proposition}
\newtheorem{lemma}[theo]{Lemma}
\newtheorem{definition}[theo]{Definition}
\newtheorem*{notation*}{Notation}
\newtheorem*{notations*}{Notations}
\newtheorem{corol}[theo]{Corollary}
\newtheorem{conj}[theo]{Conjecture}

\newenvironment{demo}[1][]{\addvspace{8mm} \emph{Proof #1.
    ~~}}{~~~$\Box$\bigskip}

\newlength{\espaceavantspecialthm}
\newlength{\espaceapresspecialthm}
\setlength{\espaceavantspecialthm}{\topsep} \setlength{\espaceapresspecialthm}{\topsep}

\newenvironment{example}[1][]{% \refstepcounter{theo} %ref=> ca labelise bien!
\vskip \espaceavantspecialthm \noindent \textsc{Example%~\thetheo
#1.} }%
{\vskip \espaceapresspecialthm}

\newenvironment{question}[1][]{% \refstepcounter{theo} %ref=> ca labelise bien!
\vskip \espaceavantspecialthm \noindent \textsc{Question%~\thetheo
#1.} }%
{\vskip \espaceapresspecialthm}

\newenvironment{remark}[1][]{\refstepcounter{theo} %ref=> ca labelise bien!
\vskip \espaceavantspecialthm \noindent \textsc{Remark~\thetheo
#1.} }%
{\vskip \espaceapresspecialthm}

\def\bb#1{\mathbb{#1}} \def\m#1{\mathcal{#1}}

\def\momeg{(M,\omega)}
\def\co{\colon\thinspace}
\def\Homeo{\mathrm{Homeo}}
\def\Hameo{\mathrm{Hameo}}
\def\Diffeo{\mathrm{Diffeo}}
\def\Symp{\mathrm{Symp}}
\def\Sympeo{\mathrm{Sympeo}}
\def\Id{\mathrm{Id}}
\newcommand{\norm}[1]{||#1||}
\def\Ham{\mathrm{Ham}}
\def\lagham#1{\mathcal{L}^\mathrm{Ham}({#1})}
\def\Hamtilde{\widetilde{\mathrm{Ham}}}
\def\cOlag#1{\mathrm{Sympeo}({#1})}
\def\Crit{\mathrm{Crit}}
\def\Spec{\mathrm{Spec}}
\def\osc{\mathrm{osc}}
\def\Cal{\mathrm{Cal}}

\title[Coisotropic $C^0$--rigidity]{Coisotropic rigidity and $C^0$--symplectic geometry}
% \title[On coisotropic rigidity]{On coisotropic rigidity}
\author{Vincent Humili\`ere, R\'emi Leclercq, Sobhan Seyfaddini}
\date{\today}

\address{VH: Institut de Math\'ematiques de Jussieu, Universit\'e Pierre et Marie Curie, 4 place Jussieu, 75005 Paris, France}
\email{vincent.humiliere@imj-prg.fr}

\address{RL: Universit\'e Paris-Sud, D\'epartement de Math\'ematiques, Bat. 425, 91405 Orsay Cedex, France}
\email{remi.leclercq@math.u-psud.fr}

\address{SS: D\'epartement de Math\'ematiques et Applications de l'\'Ecole Normale Sup\'erieure, 45 rue d'Ulm, F 75230 Paris cedex 05}
\email{sobhan.seyfaddini@ens.fr}

\subjclass[2010]{Primary 53D40; Secondary 37J05} % voir classification - http://www.ams.org/msc/
\keywords{symplectic manifolds, coisotropic submanifolds, characteristic foliation, $C^0$--symplectic topology, spectral invariants}

\maketitle

\begin{abstract}
We prove that symplectic homeomorphisms, in the sense of the celebrated Gromov--Eliashberg Theorem, preserve coisotropic submanifolds and their characteristic foliations.  This result generalizes the  Gromov--Eliashberg Theorem and demonstrates that previous rigidity results (on Lagrangians by Laudenbach--Sikorav, and on characteristics of hypersurfaces by Opshtein) are manifestations of a single rigidity phenomenon. 
To prove the above, we establish a $C^0$--dynamical property of coisotropic submanifolds which generalizes a foundational theorem in $C^0$--Hamiltonian dynamics: Uniqueness of generators for continuous analogs of Hamiltonian flows.  % Uniqueness of continuous generating Hamiltonians.
\end{abstract}

\section{Introduction and main results}\label{sec:introduction}

 A submanifold $C$ of a symplectic manifold $(M, \omega)$ is called coisotropic if for all $p\in C$, $(T_pC)^\omega\subset T_pC$ where $(T_pC)^\omega$ denotes the symplectic orthogonal of $T_pC$. For instance, hypersurfaces and Lagrangians are coisotropic. A coisotropic submanifold carries a natural foliation $\m F$ which integrates the distribution $(TC)^\omega$; $\m F$ is called the characteristic foliation of $C$. Coisotropic submanifolds and their characteristic foliations have been studied extensively in symplectic topology.  The various rigidity properties that they exhibit have been of particular interest. For example, in \cite{Ginzburg1} Ginzburg initiated a program for studying rigidity of coisotropic intersections. In this paper, we prove that coisotropic submanifolds, along with their characteristic foliations, are $C^0$--rigid in the spirit of the Gromov--Eliashberg Theorem.

This celebrated theorem states that a diffeomorphism which is a $C^0$--limit of symplectomorphisms is symplectic.  Motivated by this, symplectic homeomorphisms are defined as $C^0$--limits of symplectomorphisms (see Definition \ref{def:sympeo} and Remark \ref{remark:radical-definition}). Area preserving homeomorphisms, and their products, are examples of symplectic homeomorphisms.  Here is our main result.  %The main result of this paper is the following.

\begin{theo}\label{theo:smooth-C0-coiso-is-coiso}
Let $C$ be a smooth  coisotropic submanifold of a symplectic manifold $(M,\omega)$. Let $U$ be an open subset of $M$ and $\theta\co U \rightarrow V$ be a symplectic homeomorphism. If $\theta (C\cap U)$ is smooth, then it is coisotropic.  Furthermore, $\theta$ maps the characteristic foliation of $C\cap U$ to that of $\theta (C\cap U)$.
\end{theo}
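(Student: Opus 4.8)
The plan is to translate both conclusions into a single $C^0$--dynamical property of characteristics and to transport that property across $\theta$ using the fact that a symplectomorphism conjugates $h^t_H$ into $h^t_{H\circ\phi}$. Everything is local, so I would pass to a Darboux chart near a fixed point $p\in C\cap U$, noting at the outset that invariance of domain forces $\theta(C\cap U)$ to have the same dimension as $C$. The smooth dictionary I would record first is this: if $H$ is smooth and $H|_C=0$, then $dH$ annihilates $TC$, so $\omega(X_H,\cdot)=dH$ gives $X_H|_C\in(TC)^\omega\subset TC$; hence the flow of such an $H$ preserves $C$ and moves each point inside its leaf of $\m F$. Dually --- and this is the criterion I need at the target --- a smooth submanifold $N$ is coisotropic \emph{if and only if} the Hamiltonian flow of every smooth $H$ with $H|_N=0$ preserves $N$ (equivalently $X_H$ is tangent to $N$ along $N$); the ``only if'' is the computation just made, and the ``if'' follows by realizing any $w\in(T_qN)^\omega$ as $X_H(q)$ for a suitable $H$ vanishing on $N$.

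The core of the proof is the following $C^0$--dynamical property, which I would establish as a separate result and which is the announced generalization of uniqueness of generators: \emph{if $(\mu_t)$ is a continuous Hamiltonian flow whose continuous generator $F$ satisfies $F|_C=0$, then $\mu_t(C)=C$ for all $t$ and $\mu_t$ preserves every leaf of $\m F$.} Granting this, the coisotropy of $C':=\theta(C\cap U)$ follows by conjugation. Write $\theta=\lim_k\phi_k$ with $\phi_k$ symplectic and fix a smooth $H$ with $H|_{C'}=0$. Since $\phi_k$ is symplectic, $\phi_k^{-1}\circ h^t_H\circ\phi_k=h^t_{H\circ\phi_k}$; as $k\to\infty$ the generators $H\circ\phi_k$ converge uniformly to $F:=H\circ\theta$ and the flows converge uniformly to $\theta^{-1}\circ h^t_H\circ\theta$. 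Thus $\theta^{-1}\circ h^t_H\circ\theta$ is a continuous Hamiltonian flow generated by $F$, and $F|_C=0$ because $\theta(C\cap U)=C'$. The dynamical property yields $\theta^{-1}\circ h^t_H\circ\theta(C)=C$, i.e. $h^t_H(C')=C'$, so $X_H$ is tangent to $C'$. Since $H$ was an arbitrary smooth function vanishing on $C'$, the criterion above shows that $C'$ is coisotropic.

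For the foliation statement I would run the same conjugation in both directions and track orbits rather than just the set $C$. Fixing $p\in C\cap U$ and writing $q=\theta(p)$: conjugating a smooth $H$ with $H|_C=0$ by $\phi_k$ produces in the limit the flow $\theta\circ h^t_H\circ\theta^{-1}=h^t_{H\circ\theta^{-1}}$, whose generator vanishes on $C'$; applying the dynamical property to the (now coisotropic) $C'$ shows its orbit through $q$, namely $\theta(h^t_H(p))$, stays in the leaf of $C'$ through $q$, while $h^t_H(p)$ sweeps out the leaf of $\m F$ through $p$. Hence $\theta(\text{leaf through }p)$ is contained in the characteristic of $C'$ through $q$. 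Conjugating in the opposite direction with an $H$ vanishing on $C'$ gives the reverse inclusion, and letting the characteristic direction vary yields that $\theta$ carries the characteristic foliation of $C\cap U$ onto that of $C'$.

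The main obstacle is the dynamical property of the second paragraph, whose smooth analog is trivial but whose $C^0$ version is delicate: the smooth Hamiltonians $F_k\to F$ approximating a continuous generator need not vanish on $C$, so their flows need not preserve $C$, and I cannot conclude directly by taking limits. I expect to control this using spectral invariants, bounding the amount by which $\mu_t$ can push a piece of $C$ off itself by a quantity governed by the oscillation of $F$ transverse to $C$, which vanishes since $F|_C=0$; this is the mechanism underlying the uniqueness-of-generators theorem, here refined so as to be sensitive to $C$. Upgrading ``preserves $C$'' to ``preserves each leaf of $\m F$'' is the most delicate point, as it requires a localized, leafwise form of these energy estimates rather than a global one.
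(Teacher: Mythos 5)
Your proposal is correct and follows essentially the same route as the paper: reduce to the $C^0$--dynamical property that a continuous generator vanishing on $C$ flows along the characteristics (the paper's Theorem \ref{theo:coiso-unique}, proved separately via spectral invariants), transport it across $\theta$ using $\theta^{-1}\phi_H^t\theta=\phi_{H\circ\theta}^t$, and conclude with the dynamical characterization of coisotropic submanifolds and of their leaves as local unions of orbits of Hamiltonians vanishing on the submanifold (the paper's Lemma \ref{lemm:charact-smooth-coiso}). The only details you elide --- extending $H\circ\theta$ by zero so that the closedness hypothesis of Theorem \ref{theo:coiso-unique} applies in the Darboux chart, and the symmetric double inclusion for the foliations --- are handled exactly as you indicate.
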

An important feature of the above theorem is its locality: $C$ is not assumed to be necessarily closed and $\theta$ is not necessarily globally defined.  Here is an immediate, but surprising, consequence of Theorem \ref{theo:smooth-C0-coiso-is-coiso}.
\begin{corol}
If the image of a coisotropic submanifold via a symplectic homeomorphism is smooth, then so is the image of its characteristic foliation.
\end{corol}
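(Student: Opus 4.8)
The plan is to read the corollary as an almost immediate restatement of the second assertion of Theorem \ref{theo:smooth-C0-coiso-is-coiso}. Let $C$ be a coisotropic submanifold of $(M,\omega)$, let $\theta\co U \to V$ be a symplectic homeomorphism, and suppose $\theta(C\cap U)$ is smooth. By the theorem, $\theta(C\cap U)$ is coisotropic, and moreover $\theta$ carries the characteristic foliation $\m F$ of $C\cap U$ onto the characteristic foliation $\m F'$ of $\theta(C\cap U)$. In particular, the image $\theta(\m F)$, whose smoothness is the object of the corollary, coincides exactly with $\m F'$.

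The only remaining point is to observe that $\m F'$ is automatically smooth. I would recall that for any smooth coisotropic submanifold $C'$, the characteristic distribution $p \mapsto (T_pC')^\omega$ is a smooth subbundle of $TM|_{C'}$, being the fiberwise symplectic orthogonal of the smooth subbundle $TC'$; coisotropy is precisely the inclusion $(TC')^\omega \subset TC'$, which is the integrability condition. Hence, by Frobenius, this distribution integrates to a smooth foliation, namely $\m F'$ itself. Applying this to $C' = \theta(C\cap U)$, which is smooth and coisotropic by the theorem, shows that $\m F'$ is smooth, and combining with $\theta(\m F) = \m F'$ yields the claim.

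The content of the corollary is thus entirely front-loaded into the theorem, and there is no genuine obstacle beyond it. What is worth emphasizing is only why the statement is surprising: a priori $\theta$ is merely a homeomorphism, so the image under $\theta$ of the smooth foliation $\m F$ has no reason to be smooth. It is smooth precisely because that image is forced to agree with an intrinsically defined smooth object, the characteristic foliation of the smooth coisotropic submanifold $\theta(C\cap U)$.
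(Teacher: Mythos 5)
Your proof is correct and matches the paper's intent exactly: the paper offers no separate argument, presenting the corollary as an immediate consequence of Theorem \ref{theo:smooth-C0-coiso-is-coiso}, which is precisely how you derive it. Your added observation that the characteristic foliation of the smooth coisotropic image is itself smooth (via smoothness of the distribution $(TC')^\omega$ and Frobenius) is the right, if implicit, final step.
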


Theorem \ref{theo:smooth-C0-coiso-is-coiso} uncovers a link between two previous rigidity results and demonstrates that they are in fact extreme cases of a single rigidity phenomenon.

One extreme case, where $C$ is a hypersurface, was established by Opshtein \cite{opshtein}.  Clearly, in this case, the interesting part is the assertion on rigidity of characteristics, as the first assertion is trivially true.
 
Lagrangians constitute the other extreme case. When $C$ is Lagrangian, its characteristic foliation consists of one leaf, $C$ itself. In this case the theorem reads: \textit{If $\theta$ is a symplectic homeomorphism and $\theta(C)$ is smooth, then $\theta(C)$ is Lagrangian.} 
In \cite{LS94}, Laudenbach--Sikorav proved a similar result: \textit{Let $L$ be a closed manifold and $\iota_k$ denote a sequence of Lagrangian embeddings $L \rightarrow (M,\omega)$ which $C^0$--converges to an embedding $\iota$. If $\iota(L)$ is smooth, then (under some technical assumptions) $\iota(L)$ is Lagrangian.} 
On one hand, their result only requires convergence of embeddings while Theorem \ref{theo:smooth-C0-coiso-is-coiso} requires convergence of symplectomorphisms. On the other hand, Theorem \ref{theo:smooth-C0-coiso-is-coiso} is local: It does not require the Lagrangian nor the symplectic manifold to be closed. 

 The above discussion raises the following question.
\begin{question}
 What can one say about $C^0$--limits of coisotropic embeddings and their characteristic foliations?
\end{question}

We would like to point out that Theorem \ref{theo:smooth-C0-coiso-is-coiso} is a coisotropic generalization of the Gromov--Eliashberg Theorem. Indeed, it implies that if the graph of a symplectic homeomorphism is smooth, then it is Lagrangian. \\

As we shall see, the proof of Theorem \ref{theo:smooth-C0-coiso-is-coiso} relies on dynamical properties of coisotropic submanifolds. In particular, we use $C^0$--Hamiltonian dynamics as defined by M\"uller and Oh \cite{muller-oh}. To the best of our knowledge, this is one of the first extrinsic applications of this recent, yet promising, theory.

Following \cite{muller-oh}, we call a path of homeomorphisms $\phi^t$  a hameotopy if there exists a sequence of smooth Hamiltonian functions $H_k$ such that the isotopies $\phi^t_{H_k}$ $C^0$--converge to $\phi$ and the Hamiltonians $H_k$ $C^0$--converge to a continuous function $H$ (see Definition \ref{def:hameo}). Then, $H$ is said to generate the hameotopy $\phi^t$, and to emphasize this we write $\phi^t_H$; the set of such generators will be denoted $C^0_\Ham$.  A foundational result of $C^0$--Hamiltonian dynamics is the uniqueness of generators Theorem (see \cite{viterbo2,buhovsky-seyfaddini}) which states that the trivial hameotopy, $\phi^t = \mathrm{Id},$ can only be generated by those functions in $C^0_\Ham$ which solely depend on time (see also Corollary \ref{cor:uniqueness-thm} below).
% \textit{$H \in C^0_\Ham$ is constant if and only if $\phi^t_H = \mathrm{Id}$} (see \cite{viterbo2,buhovsky-seyfaddini}). 

%Recall the following dynamical property of coisotropic submanifolds: \textit{Let $H\in C^\infty(\bb S^1\times M)$, $H|_C$ is a function of time if and only if $\phi_H$ (preserves $C$ and) flows along the characteristic foliation of $C$.} By flowing along characteristics we mean that for any point $p\in C$ and any time $t\geq 0$, $\phi_H^t(p)\in\m F(p)$, where $\m F(p)$ stands for the characteristic leaf through $p$.  The following result, which plays an important role in the proof of Theorem \ref{theo:smooth-C0-coiso-is-coiso}, is a $C^0$--analog of the above.

%The following result, which plays an important role in the proof of Theorem \ref{theo:smooth-C0-coiso-is-coiso}, is a $C^0$--analog of the above.

Let $H\in C^\infty( [0,1] \times M)$. Recall the following two dynamical properties of a coisotropic submanifold $C$: Assume that $C$ is closed as a subset, 

\noindent \textbf{1.} $H|_C$ is a function of time if and only if $\phi_H$ (preserves $C$ and) flows along the characteristic foliation of $C$. By flowing along characteristics we mean that for any point $p\in C$ and any time $t\geq 0$, $\phi_H^t(p)\in\m F(p)$, where $\m F(p)$ stands for the characteristic leaf through $p$.  

\noindent \textbf{2.}  For each $p \in C, \; H|_{\m F (p)}$ is a function of time if and only if the flow $\phi_H$ preserves $C$.  %The restriction of $H$ to each leaf of the characteristic foliation $\m F$  is a function of time if and only the flow $\phi_H$ preserves $C$.

We will show that the above two properties hold for continuous Hamiltonians.   The $C^0$--analog of the first property, stated below, plays an important role in the proof of Theorem \ref{theo:smooth-C0-coiso-is-coiso}.

\begin{theo}\label{theo:coiso-unique}
  Denote by $C$ a connected coisotropic submanifold of a symplectic manifold $(M, \omega)$ which is closed as a subset\footnote{  It is our convention that submanifolds have no boundary.  Note that a submanifold is closed as a subset if and only if it is properly embedded. } of $M$. Let $H\in C^0_\Ham$ with induced hameotopy $\phi_H$. The restriction of $H$ to $C$ is a function of time if and only if $\phi_H$ preserves $C$ and flows along the leaves of its characteristic foliation.
\end{theo}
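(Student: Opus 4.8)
The plan is to transport the uniqueness of generators theorem (Corollary~\ref{cor:uniqueness-thm}) to the leaf space of $C$ via coisotropic reduction. I first recall why the statement holds for smooth $H$, since this guides the $C^0$ argument. If $H_t|_C$ is constant, then $dH_t$ annihilates $T_pC$ for every $p\in C$, so the defining relation $\iota_{X_{H_t}}\omega=-dH_t$ forces $X_{H_t}(p)\in(T_pC)^\omega$; since $C$ is coisotropic, $(T_pC)^\omega\subset T_pC$ is precisely the characteristic distribution, so $\phi_H$ is tangent both to $C$ and to $\m F$, i.e. it preserves $C$ and flows along characteristics. The converse is the same computation read backwards: if $X_{H_t}(p)\in(T_pC)^\omega$ along $C$, then $dH_t(v)=-\omega(X_{H_t}(p),v)=0$ for every $v\in T_pC$, so $H_t|_C$ is locally constant, and connectedness of $C$ upgrades this to ``constant'', i.e. a function of time. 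The task is to recover this equivalence when $H$ is merely continuous and only the hameotopy $\phi_H$ is available.

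Granting the companion $C^0$ analog of dynamical property~2 stated above — that $\phi_H$ preserves $C$ if and only if $H$ restricts to a function of time on each leaf $\m F(p)$ — both implications reduce to a single statement about the leaf space. Indeed, once $\phi_H$ preserves $C$, property~2 already guarantees that $H|_C$ is constant along each leaf at each time, so the hypothesis that $H|_C$ is a function of time is equivalent to asking that this common leafwise value be independent of the leaf. Working in a coisotropic Darboux chart, in which $C=\{y_1=\dots=y_k=0\}$ and the leaves of $\m F$ are the $(x_1,\dots,x_k)$--planes, the local leaf space $N=C/\m F$ is a symplectic manifold with coordinates $(x_{k+1},\dots,x_n,y_{k+1},\dots,y_n)$. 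I would then show that $\phi_H|_C$ descends to a well-defined isotopy $\bar\phi$ of $N$, that $\bar\phi$ is itself a hameotopy, and that it is generated by the continuous function $\bar H$ obtained by descending $H|_C$. Under this dictionary, ``$\phi_H$ flows along characteristics'' means exactly ``$\bar\phi^t=\Id$ for all $t$'', while ``$H|_C$ is a function of time'' means exactly ``$\bar H$ is a function of time''.

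With the reduction in hand, both directions of the theorem follow at once from the uniqueness of generators theorem applied to $\bar\phi$ on $N$: the trivial hameotopy can be generated only by functions of time, so $\bar\phi^t=\Id$ for all $t$ precisely when $\bar H$ depends only on $t$. Since $N$ is built locally, I would invoke the locality inherent in the uniqueness theorem (the same locality exploited in Theorem~\ref{theo:smooth-C0-coiso-is-coiso}), so that the non-compactness of $N$ causes no difficulty.

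The main obstacle is the $C^0$ descent of the dynamics, i.e. establishing that $\bar\phi$ is a hameotopy generated by $\bar H$. The difficulty is that the approximating Hamiltonian flows $\phi_{H_k}$ need not preserve $C$ — only the limit $\phi_H$ does — so one cannot simply reduce them term by term. I would address this by replacing the $H_k$ with modified smooth Hamiltonians that are constant along the characteristic leaves in a neighborhood of $C$, hence have $C$--preserving flows that genuinely reduce to $N$, while retaining $C^0$--convergence of both the flows and the Hamiltonians; controlling this modification, and verifying that the reduced flows and reduced Hamiltonians still converge in $C^0$ to $\bar\phi$ and $\bar H$, is the crux. A further delicate point is that $\bar\phi$ is well defined only if $\phi_H$ carries leaves to leaves, which I expect to extract from the same approximation, using that each $\phi_{H_k}$, being a symplectomorphism, preserves the characteristic foliation as soon as it preserves $C$.
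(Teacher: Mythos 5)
Your plan is genuinely different from the paper's (which never reduces to the leaf space, but instead slices $C$ into Lagrangians and runs spectral-invariant estimates in Weinstein neighborhoods), and your smooth-case computation is correct; however, the $C^0$ argument has two gaps that I do not see how to close. First, you take the companion statement of Theorem \ref{theo:emman_question} as an input (``granting the $C^0$ analog of dynamical property 2''). In the paper that theorem is proved \emph{after} and \emph{from} Theorems \ref{theo:coiso-unique} and \ref{theo:smooth-C0-coiso-is-coiso}, via the Lagrangian graph of the characteristic foliation and Propositions \ref{prop:c0lag-constant->preserved} and \ref{prop:C0-preserved->constant}; so you are assuming a result at least as hard as the one you are proving. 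The same issue affects the well-definedness of $\bar\phi$: knowing that $\phi_H$ carries leaves to leaves is precisely the foliation part of Theorem \ref{theo:smooth-C0-coiso-is-coiso}, which the paper also derives from Theorem \ref{theo:coiso-unique}. To make your route non-circular you would have to prove the leafwise statement independently, which brings you back to the Lagrangian propositions of Section \ref{sec:proof-main-theo-Lagr}.

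Second, and more fundamentally, the step you yourself identify as the crux --- replacing the approximating Hamiltonians $H_k$ by smooth Hamiltonians that are constant along the characteristic leaves near $C$ while ``retaining $C^0$--convergence of both the flows and the Hamiltonians'' --- runs into the basic obstruction of $C^0$ Hamiltonian dynamics: a $C^0$--small modification of a Hamiltonian can change its flow drastically, so there is no reason the modified flows still converge to $\phi_H$, hence no reason the reduced isotopies converge to anything, let alone form a hameotopy of $N$ generated by $\bar H$. If such a modification were available, uniqueness of generators on $N$ would indeed finish the proof, but producing it is essentially equivalent to the rigidity you are trying to establish. The paper sidesteps this entirely: it never needs the approximating flows to preserve $C$, only quantitative lower bounds on $\gamma(\phi_{H_k}^t(L_0),L_0)$ for auxiliary Lagrangians built inside $C$ (for the direct implication) or containing a leaf (for the converse), via Lemmas \ref{lemm:our-inequality}, \ref{lemm:LiRi-inequality} and \ref{lemm:haus_cont_gamma}. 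I would recommend abandoning the reduction to the leaf space and proving the two Lagrangian propositions directly.
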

This result answers a question raised by Buhovsky and Opshtein who asked if the above holds in the particular case where $C$ is a smooth hypersurface.  It also drastically generalizes the aforementioned uniqueness of generators Theorem.  Indeed, if $C$ is taken to be $M$, then the characteristic foliation consists of the points of $M$ and the theorem follows immediately:
\begin{corol}\label{cor:uniqueness-thm}
  $H \in C^0_\Ham$ is a function of time if and only if $\phi^t_H = \mathrm{Id}$.
\end{corol}

After the first draft of this article was written, we were asked by Opshtein if the second of the aforementioned properties holds for $C^0$ Hamiltonians.  Our next result provides an affirmative answer to  Opshtein's question.
\begin{theo}\label{theo:emman_question}
  Denote by $C$ a connected coisotropic submanifold of a symplectic manifold $(M, \omega)$  which is closed as a subset of $M$. Let $H\in C^0_\Ham$ with induced hameotopy $\phi_H$.  The restriction of $H$ to each leaf of the characteristic foliation of $C$  is a function of time if and only if the flow $\phi_H$ preserves $C$.
\end{theo}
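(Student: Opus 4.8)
The plan is to reduce both implications to the Lagrangian case of Theorem~\ref{theo:coiso-unique}, which is available to us, by passing to the characteristic Lagrangian correspondence attached to $C$. Working locally near a point $p_0\in C$ suffices: leafwise constancy can be tested on the plaques of a coisotropic Weinstein chart and then propagated along each connected leaf. So fix such a chart, in which $C=\{p=0\}$, the characteristic leaves are the $q$-plaques, and let $\pi\co C\rightarrow\Sigma$ be the projection onto the local reduced symplectic manifold $(\Sigma,\omega_\Sigma)$, whose points are the leaves. The defining identity $\omega_\Sigma(d\pi\,v,d\pi\,w)=\omega(v,w)$ for $v,w\in TC$ shows immediately that the graph $\Gamma=\{(c,\pi(c)):c\in C\}$ is a Lagrangian submanifold of $(M\times\Sigma,\ \omega\ominus\omega_\Sigma)$.

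The argument rests on a simple dictionary. Given $G\in C^0_\Ham(\Sigma)$, set $\tilde H(t,m,\sigma)=H(t,m)-G(t,\sigma)$; then $\tilde H$ generates the product hameotopy $\phi_H\times\phi_G$, and $\phi_{\tilde H}^t(c,\pi(c))=(\phi_H^t(c),\phi_G^t(\pi(c)))$ for $c\in C$. Hence $\phi_{\tilde H}$ preserves $\Gamma$ exactly when $\phi_H^t(c)\in C$ and $\pi(\phi_H^t(c))=\phi_G^t(\pi(c))$ for all $c,t$. On the other side, since $G\circ\pi$ is automatically leafwise constant, $\tilde H|_\Gamma=H|_C-G\circ\pi$ can be a function of time only if $H$ is leafwise constant; conversely, when $H$ is leafwise constant the choice $G=\bar H$, where $\bar H$ is the reduced Hamiltonian defined by $H|_C=\bar H\circ\pi$, makes $\tilde H|_\Gamma$ vanish identically. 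Because $\Gamma$ is Lagrangian, the Lagrangian instance of Theorem~\ref{theo:coiso-unique} asserts that $\phi_{\tilde H}$ preserves $\Gamma$ if and only if $\tilde H|_\Gamma$ is a function of time.

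The two implications now follow from suitable choices of $G$. For ``$\phi_H$ preserves $C\Rightarrow H$ leafwise constant'', I first invoke Theorem~\ref{theo:smooth-C0-coiso-is-coiso}: each $\phi_H^t$ is a symplectic homeomorphism preserving the smooth coisotropic $C$, hence preserves the characteristic foliation, so $\phi_H^t|_C$ descends to a flow $\bar\psi^t$ on $\Sigma$ with $\bar\psi^t\circ\pi=\pi\circ\phi_H^t$. Taking $G$ to be a generator of $\bar\psi$, the dictionary gives that $\phi_{\tilde H}$ preserves $\Gamma$; Theorem~\ref{theo:coiso-unique} then forces $\tilde H|_\Gamma$ to be a function of time, and therefore $H$ to be leafwise constant. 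For the reverse implication I assume $H$ leafwise constant, form $\bar H$, and take $G=\bar H$, so that $\tilde H|_\Gamma\equiv 0$; Theorem~\ref{theo:coiso-unique} yields that $\phi_{\tilde H}$ preserves $\Gamma$, and reading off the first coordinate shows $\phi_H$ preserves $C$.

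The main obstacle is precisely the step the dictionary takes for granted: realizing the coupled motion on $M\times\Sigma$ as a genuine hameotopy, that is, producing $G\in C^0_\Ham(\Sigma)$ (the generator of the reduced flow $\bar\psi$ in one direction, the reduced function $\bar H$ in the other) together with an approximating sequence for $\tilde H$ whose Hamiltonian flows $C^0$-converge to $\phi_H\times\phi_G$. The difficulty is that the smooth Hamiltonians $H_k$ approximating $H$ do not preserve $C$, so their flows cannot simply be reduced; controlling the reduced dynamics in the $C^0$ limit is exactly where the energy--capacity and spectral estimates underlying Theorem~\ref{theo:coiso-unique} must be deployed. A secondary, more bookkeeping obstacle is that $\Sigma$, $\pi$, and $\Gamma$ are defined only locally and $\Gamma$ is only locally closed as a subset, so one must either localize the uniqueness statement in space and time (rescaling so that trajectories remain in the chart) or reassemble the local conclusions using the connectedness of $C$ and of its leaves.
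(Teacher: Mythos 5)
Your overall strategy---reduce to the Lagrangian case of Theorem~\ref{theo:coiso-unique} by passing to a Lagrangian correspondence attached to the characteristic foliation---is the right one, but there is a genuine gap exactly where you flag it, and it is not a technicality that the ``energy--capacity and spectral estimates underlying Theorem~\ref{theo:coiso-unique}'' can be expected to absorb. Your dictionary lives on $(M\times\Sigma,\ \omega\ominus\omega_\Sigma)$ and requires a continuous generator $G\in C^0_\Ham(\Sigma)$ for the reduced dynamics: in one direction a generator of the reduced flow $\bar\psi^t$, in the other the reduced function $\bar H$. Membership in $C^0_\Ham(\Sigma)$ is not a pointwise condition; it demands a sequence of \emph{smooth} Hamiltonians on $\Sigma$ whose flows $C^0$-converge to $\bar\psi^t$ and which converge uniformly to $G$. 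The approximating Hamiltonians $H_k$ for $H$ do not preserve $C$, so nothing descends to $\Sigma$, and no approximating sequence for $G$ (nor for $\tilde H=H-G$ on the product) is available. Without it, $\phi_H\times\phi_G$ is not known to be a hameotopy and Theorem~\ref{theo:coiso-unique} simply cannot be invoked on $\Gamma$. This is the missing idea, not a bookkeeping issue.

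The paper circumvents this by choosing a different Lagrangian correspondence: the graph of the foliation $\Gamma(\m F)=\{(x,x')\ |\ x\in C,\ x'\in\m F(x)\}$ inside $(M\times M,\ \omega\oplus(-\omega))$, which in a coisotropic chart is a Lagrangian submanifold. The point of this choice is that the relevant product Hamiltonian $K_t(x,x')=H_t(x)-H_t(x')$ is \emph{automatically} in $C^0_\Ham(M\times M)$, with approximating sequence $H_k(x)-H_k(x')$ and hameotopy $\phi_H\times\phi_H$---no reduced generator is ever needed. Moreover $K$ vanishes on $\Gamma(\m F)$ precisely when $H$ is leafwise constant, and $\phi_H\times\phi_H$ preserves $\Gamma(\m F)$ precisely when $\phi_H$ maps leaves to leaves (which, in the converse direction, follows from Theorem~\ref{theo:smooth-C0-coiso-is-coiso} once $\phi_H$ preserves $C$). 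The two implications are then obtained from the localized Lagrangian statements (Propositions~\ref{prop:c0lag-constant->preserved} and~\ref{prop:C0-preserved->constant}) applied to the local piece $\Lambda=(U\times U)\cap\Gamma(\m F)$, with the global conclusion assembled via a supremum-time argument using that $C$ is closed as a subset, and via connectedness of the leaves. If you replace your correspondence $\{(c,\pi(c))\}\subset M\times\Sigma$ by $\Gamma(\m F)\subset M\times M$ and $\tilde H$ by $K$, your argument closes.
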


When $C$ is a Lagrangian, Theorems \ref{theo:coiso-unique} and \ref{theo:emman_question} coincide and both state that: \textit{The restriction of $H$ to $L$ is a function of time if and only if $\phi_H^t(L)=L$ for all $t$.} In an interesting manifestation of Weinstein's creed, ``Everything is a Lagrangian submanifold!'', the general case of Theorems \ref{theo:smooth-C0-coiso-is-coiso}, \ref{theo:coiso-unique} and \ref{theo:emman_question} will be essentially deduced from the a priori particular case of Lagrangians.  \\

The results of this paper establish $C^0$--rigidity of coisotropic submanifolds together with their characteristic foliations. It would be interesting to see if isotropic or symplectic submanifolds exhibit similar rigidity properties: If a smooth submanifold is the image of an isotropic (respectively symplectic) submanifold under a symplectic homeomorphism, is it isotropic (respectively symplectic)? Note that if in these questions one considers, instead of symplectic homeomorphisms, $C^0$--limits of isotropic (respectively symplectic) embeddings then Gromov's results on the $h$-principle provide negative answers in general. In short, isotropic and symplectic embeddings are not $C^0$--rigid. (See \cite[Section 3.4.2]{gromov}, or \cite[Theorems 12.1.1 and 12.4.1]{eliashberg-mishachev}.) 

\subsection*{Defining $C^0$--coisotropic submanifolds}
As we will see in Section \ref{sec:C0_coisotropics}, an interesting feature of Theorem \ref{theo:smooth-C0-coiso-is-coiso} is that it allows us to define $C^0$--coisotropic submanifolds along with their $C^0$--characteristic foliations.  Roughly speaking, a $C^0$--coisotropic will be defined to be a $C^0$--submanifold of a symplectic manifold which is locally symplectic homeomorphic to a smooth coisotropic.  The well-definedness of this notion is a consequence of Theorem \ref{theo:smooth-C0-coiso-is-coiso}.  Furthermore, from the same theorem we conclude that a $C^0$--coisotropic submanifold admits a unique $C^0$--foliation which will be referred to as its $C^0$--characteristic foliation.   

As a consequence, we obtain a definition for $C^0$--Lagrangian submanifolds as $C^0$--coisotropic submanifolds of dimension $n$.   Graphs of symplectic homeomorphisms and graphs of $C^0$ 1--forms, closed in the sense of distributions, constitute examples of $C^0$--Lagrangians.  For further details, we refer the interested reader to Section \ref{sec:C0_coisotropics}.

\subsection*{Main tools: Lagrangian spectral invariants}

In order to prove the main results, we use the theory of Lagrangian spectral invariants% ; see Section \ref{sec:lagr-spectr-invar}
. One consequence of this theory is the existence of the spectral distance $\gamma$ on the space of Lagrangians Hamiltonian isotopic to the 0--section in cotangent bundles introduced by Viterbo in \cite{viterbo1}.\footnote{One of the main features of $\gamma$ is that it is bounded from above by Hofer's distance on Lagrangians. In particular, Lemmas \ref{lemm:our-inequality} and \ref{lemm:LiRi-inequality} also hold with $\gamma$ replaced by Hofer's distance.}

More precisely, we establish inequalities comparing $\gamma$ to a capacity recently defined by Lisi--Rieser \cite{LR}. This capacity, which we denote by $\chzrel$, is a relative (to a fixed Lagrangian) version of the Hofer--Zehnder capacity.  We will now define $\chzrel$.
Fix a Lagrangian $L$.  Recall that a Hamiltonian chord of a Hamiltonian $H$, of length $T$, is a path $ x\co [0,T] \rightarrow M$ such that $x(0)$, $x(T) \in L$ and for all $t\in [0,T]$, $\dot{x}(t)=X_H^t(x(t))$. A Hamiltonian is said to be $L$--slow if all of its Hamiltonian chords of length at most 1 are constant. We denote by $\m H(U)$ the set of admissible Hamiltonians, that is, smooth time-independent functions with compact support included in $U$, which are non-negative and reach their maximum at a point of $L$. For an open set $U$ which intersects $L$, the relative capacity of $U$ with respect to $L$ is defined as
$$\chzrel(U;L)=\sup\{\max f\,|\,f\in \m H(U) \text{ $L$--slow}\}\,.$$ 
For instance, if $B$ is the open ball of radius $r$ in $\mathbb{R}^{2n}$ and $\m L_0 = \mathbb{R}^n \times \{0\}$, then $\chzrel(B;\m L_0)=\frac{\pi r^2}{2}$; see \cite{LR}. %To get this specific value, one first shows that the ball can be displaced by a Hamiltonian diffeomorphism $\phi$ such that $\gamma(\phi(\m L_0),\m L_0)=\frac{\pi r^2}{2}$ and the energy-capacity inequality of Lemma \ref{lemm:LiRi-inequality} gives an upper bound. Then one produces explicit admissible, $\m L_0$--slow Hamiltonians whose maxima are less than and arbitrarily close to $\frac{\pi r^2}{2}$.\\

In what follows, we denote by $L_0$ the 0--section of $T^*L$. The first energy-capacity inequality used in this paper is the following:
\begin{lemma}\label{lemm:our-inequality}
Let $L$ be a smooth closed manifold and $U_-$ and $U_+$ be open subsets of $T^*L$, so that $U_\pm\cap L_0\neq\emptyset$. If a compactly supported Hamiltonian $H$ satisfies $H|_{U_\pm}=\pm C_\pm$ with $C_\pm\in \bb R$ so that $C_\pm > \chzrel(U_\pm;L_0)$,  then $\gamma(\phi_H^{1}(L_0),L_0) \geq \min \{\chzrel(U_-;L_0),\chzrel(U_+;L_0)\}$.
\end{lemma}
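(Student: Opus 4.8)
The plan is to bound $\gamma$ from below by means of Lagrangian spectral invariants. Set $\Lambda:=\phi_H^1(L_0)$ and recall that $\gamma(\Lambda,L_0)=c_+(\Lambda,L_0)-c_-(\Lambda,L_0)$, where $c_+$ and $c_-$ are the spectral numbers associated with the fundamental class $[L]$ and the unit $1\in H^*(L)$; equivalently they are the spectral invariants $\ell([L],H)$ and $\ell(1,H)$ of $H$ with boundary condition $L_0$. I would rely on four standard properties: \emph{spectrality} (each $c_\pm$ equals the action of some Hamiltonian chord of $H$ from $L_0$ to $L_0$), \emph{monotonicity} (if $H_1\le H_2$ then $\ell(\alpha,H_1)\le\ell(\alpha,H_2)$ for every class $\alpha$), invariance under adding a function of time (so that $\gamma(\Lambda,L_0)$ depends only on the Lagrangian $\Lambda$), and the triangle inequality for $\gamma$.

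The engine of the argument is the computation of these invariants on $L_0$--slow admissible Hamiltonians. If $f\in\m H(U)$ is $L_0$--slow, then every chord of length $1$ issued from $L_0$ is constant, so the spectrum is read off from the critical values of $f$ on $L_0$; since $f$ is non--negative, compactly supported and reaches its maximum on $L_0$, one finds $c_+(\phi_f^1(L_0),L_0)=0$ and $c_-(\phi_f^1(L_0),L_0)=-\max f$, whence $\gamma(\phi_f^1(L_0),L_0)=\max f$. Taking the supremum over such $f$ is exactly the definition of $\chzrel(U;L_0)$. I would therefore fix $L_0$--slow Hamiltonians $f_\pm\in\m H(U_\pm)$ whose maxima approach $\chzrel(U_\pm;L_0)$ from below. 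Because $\max f_\pm\le\chzrel(U_\pm;L_0)<C_\pm$ while $H=\pm C_\pm$ on $U_\pm$, the function $f_+$ lies below $H$ on its support and $-f_-$ lies above $H$ on its support. The aim is to feed these comparisons into monotonicity so as to bound $c_-(H)$ from above by (a quantity tending to) $-\chzrel(U_+;L_0)$ and $c_+(H)$ from below by $\chzrel(U_-;L_0)$, and then to let $\max f_\pm\to\chzrel(U_\pm;L_0)$.

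The hard part, and the reason the two capacities do not simply add, is that the plateaus have opposite signs and $H$ is completely unconstrained away from $L_0\cup U_+\cup U_-$: on $U_+$ one has $-f_-=0\le H$ whereas on $U_-$ one has $-f_-\ge H$, and symmetrically $f_+\le H$ on $U_+$ but $f_+\ge H$ on $U_-$, so neither comparison holds globally. To circumvent this I would not compare the Hamiltonians on all of $M$, but rather use that $\gamma(\Lambda,L_0)$ depends only on the pair $(\Lambda,L_0)$ and localise each spectral number at the plateau that governs it. Here spectrality is decisive: the points of $\Lambda\cap L_0$ lying in the interior of $U_\pm$ are fixed by the flow and carry the actions $\mp C_\pm$, so the action spectrum of $(\Lambda,L_0)$ contains both $-C_+$ and $+C_-$. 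The role of the slowness hypothesis is then to guarantee that the homology class selecting $c_+$ (respectively $c_-$) persists across the corresponding transition region all the way down to the level $\chzrel(U_-;L_0)$ (respectively up to $-\chzrel(U_+;L_0)$); what prevents one from harvesting \emph{both} levels at once, and forces the minimum, is precisely the opposite-sign interference between the two transition regions. Establishing this persistence rigorously---i.e.\ making the monotone comparison with $f_\pm$ survive the interference---is the crux of the proof; once it is in place, letting $\max f_\pm\to\chzrel(U_\pm;L_0)$ yields $\gamma(\Lambda,L_0)\ge\min\{\chzrel(U_-;L_0),\chzrel(U_+;L_0)\}$.
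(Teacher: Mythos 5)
Your proposal correctly assembles the standard toolbox (spectrality, the computation $\gamma(\phi_f^1(L_0),L_0)=\max f$ for $L_0$--slow admissible $f$, which is Property (7) of Theorem \ref{theo:spectral-properties}, and the observation that the two plateaus cannot both be harvested), but it stops exactly where the proof has to begin. You write that establishing the ``persistence'' of the relevant spectral number across the transition region ``is the crux of the proof; once it is in place\dots''--- that crux is the entire content of the lemma, and your sketch contains no mechanism for it. In particular, monotonicity is not the right engine here: as you yourself note, neither comparison $f_+\le H$ nor $-f_-\ge H$ holds globally, and there is no way to ``localise'' a spectral invariant to a plateau by monotonicity alone. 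Moreover, your sketch never explains where the \emph{minimum} comes from, i.e.\ which of the two plateaus one is entitled to use for a given $H$; without that, the argument cannot be completed.

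The paper's proof supplies both missing ingredients. First, the minimum arises from a dichotomy on the sign of $\ell_-(\phi_H^1)$: if $\ell_-(\phi_H^1)\le 0$ one works with $U_+$, and if $\ell_-(\phi_H^1)\ge 0$ one replaces $H$ by $\bar H(t,x)=-H(t,\phi_H^t(x))$ (which equals $+C_-$ on $U_-$, since $\phi_H^t$ fixes $U_-$ pointwise) and reduces to the first case with $U_-$ in place of $U_+$; at least one case always applies, and $\gamma$ is unchanged. Second, the ``persistence'' is proved by a deformation--of--spectrum argument, not monotonicity: for $L_0$--slow $f\in\m H(U_+)$ one sets $H_s=H-sf$, notes that $\phi_{H_s}^1=\phi_H^1\phi_f^{-s}$ because $H$ is constant on $U_+\supset\mathrm{supp}(f)$, and observes that the new critical values $C_+-sf(p)$ all stay above $C_+-\chzrel(U_+;L_0)>0$, so that $\mathrm{Spec}(\m A_{H_s})\cap(-\infty,\varepsilon)$ is independent of $s$ for some $\varepsilon>0$. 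Continuity of $s\mapsto\ell_-(\phi_{H_s}^1)$, together with $\ell_-(\phi_H^1)\le 0<\varepsilon$ and nowhere--density of the spectrum, forces $\ell_-(\phi_{H_s}^1)$ to be constant in $s$; combined with the triangle inequality and duality this gives $\max f=\ell_+(\phi_f^1)\le\ell_+(\phi_H^1)-\ell_-(\phi_H^1)=\gamma(\phi_H^1(L_0),L_0)$, and one takes the supremum over $f$. Without these two steps---or substitutes for them---your proposal is a plan rather than a proof. (A minor further point: with the paper's action convention the constant chords on $U_\pm\cap L_0$ have actions $\pm C_\pm$, not $\mp C_\pm$, and $\ell_+(\phi_f^1)=\max f$, $\ell_-(\phi_f^1)=0$; your signs are internally consistent only under the opposite convention.)
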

This is the Lagrangian analog of the energy-capacity inequality proven for the Hamiltonian spectral distance in \cite[Corollary 12]{HLS12}. Then, as in \cite{HLS12}, we will derive a similar inequality for Hamiltonians (not necessarily constant but) with controlled oscillations on $U_\pm$, see Corollary \ref{corol:nrg-capcity-small-osc}.

Lemma \ref{lemm:our-inequality} can also be established on compact manifolds for weakly exact Lagrangians via Leclercq \cite{Leclercq08} and for monotone Lagrangians via Leclercq--Zapolsky \cite{LZ}.\\

The second energy-capacity inequality is due to Lisi--Rieser \cite{LR}. This is a relative version of the standard energy-capacity inequality, see for example Viterbo \cite{viterbo1}. 

\begin{lemma}\label{lemm:LiRi-inequality}
Let $L$ be a smooth closed manifold. Suppose that $U$ is an open subset of $T^*L$, with $L_0\cap U\neq \emptyset$. Assume that $L'$ is a Lagrangian Hamiltonian isotopic to $L_0$ such that $L'\cap U=\emptyset$. Then $\gamma(L',L_0) \geq \chzrel(U;L_0)$.
\end{lemma}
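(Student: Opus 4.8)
The plan is to prove the inequality for one test function at a time and then pass to the supremum. By the definition of $\chzrel$, it suffices to show that $\gamma(L',L_0)\ge \max f$ for every admissible $L_0$--slow Hamiltonian $f\in\m H(U)$, and then take the supremum over all such $f$. Fix such an $f$ and write $c=\max f$. Two elementary observations set the stage. First, since $f$ is supported in $U$ and $L'\cap U=\emptyset$, the flow $\phi_f^t$ is the identity near $L'$, so $\phi_f^t(L')=L'$ for all $t$. Second, because $f\ge 0$ attains its maximum at a point of $L_0$ and is $L_0$--slow, all of its Hamiltonian chords of length at most $1$ are constant; hence the action spectrum of $f$ relative to $L_0$ is contained in the set of critical values of $f$. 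Combined with $\min_{L_0}f=0$ (which holds because $L_0\not\subset U$: two exact Lagrangians Hamiltonian isotopic in $T^*L$ must intersect, so $L'\cap L_0\neq\emptyset$ while $L'\cap U=\emptyset$), this yields $\gamma(\phi_f^1(L_0),L_0)=\osc_{L_0}f=c$, with the entire spectral gap lying on a single side of $L_0$.

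The sharp statement is then obtained by the relative energy--capacity mechanism, which I would run by contradiction. Suppose $\gamma(L',L_0)<c$. Writing $L'=\phi_G^1(L_0)$ and letting $f\# G$ denote the Hamiltonian generating $\phi_f\phi_G$, the fact that $\phi_f^1$ fixes $L'$ shows that $f\# G$ and $G$ have the same time--one Lagrangian $L'$, so their spectral invariants agree. The assumption $\gamma(L',L_0)<c$ supplies precisely the action budget needed to run a continuation/monotonicity argument in filtered Lagrangian Floer homology: the nontrivial spectral class living at action level $c$ for $f$ cannot be squeezed below level $\gamma(L',L_0)$ without producing a non-constant Floer strip, i.e.\ a non-constant Hamiltonian chord of $f$ of length at most $1$ from $L_0$ to $L_0$. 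This contradicts the $L_0$--slowness of $f$, forcing $\gamma(L',L_0)\ge c$; taking the supremum over $f$ gives $\gamma(L',L_0)\ge\chzrel(U;L_0)$.

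The main obstacle is the \emph{sharp} constant. Using only the symmetric features of $\gamma$---its triangle inequality, its invariance under Hamiltonian diffeomorphisms, and $\phi_f^1(L')=L'$---one immediately gets the weaker bound
\[
c=\gamma(\phi_f^1(L_0),L_0)\le \gamma(\phi_f^1(L_0),\phi_f^1(L'))+\gamma(L',L_0)=\gamma(L_0,L')+\gamma(L',L_0)=2\,\gamma(L',L_0),
\]
that is, merely $\gamma(L',L_0)\ge\tfrac12 c$. Upgrading the factor $\tfrac12$ to $1$ is exactly the point where the one-sidedness of $f$ (the fact that $\phi_f^1(L_0)$ sits on a single side of $L_0$) must be used, through a careful tracking of action windows in the filtered theory rather than through the symmetric distance alone. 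I expect the delicate step to be the construction of the continuation map and the verification that the relevant Floer class persists in the prescribed action window; this is the genuine Hofer--Zehnder/capacity input, and it is the relative analogue of the classical energy--capacity inequality, with the remaining bookkeeping being formal.
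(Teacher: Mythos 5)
Your setup is right (reduce to a single $L_0$--slow $f\in\m H(U)$, note that $\phi_f^t$ fixes $L'$, recall $\ell_+(\phi_f^1)=\max f$ and $\ell_-(\phi_f^1)=0$ from Property \textit{(\ref{si:for-slow-ham})}, and observe that the symmetric properties of $\gamma$ alone only give the factor $\tfrac12$), but the core of the proof --- the mechanism that upgrades $\tfrac12$ to $1$ --- is precisely the part you leave as an unexecuted sketch (``a continuation/monotonicity argument in filtered Lagrangian Floer homology \dots I expect the delicate step to be the construction of the continuation map''). That is the whole content of the lemma, so as written this is a genuine gap. Moreover, the one concrete claim you make there is misjustified: from $\phi_f^1(L')=L'$ you conclude that $f\# G$ and $G$ ``have the same time--one Lagrangian, so their spectral invariants agree,'' but equality of the images $\phi(L_0)=\phi'(L_0)$ only forces the \emph{difference} $\ell_+-\ell_-$ to agree (Property \textit{(\ref{si:lagr-invar})}), not $\ell_\pm$ individually (e.g.\ composing with a Hamiltonian that is a nonzero constant near $L_0$ and cut off far away changes $\ell_\pm$ without moving $L_0$).

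What the paper actually does at this point requires no new Floer-theoretic construction: it interpolates $s\mapsto \phi_{sf}^1\phi_H^1$ (where $L'=\phi_H^1(L_0)$, realized as the concatenated isotopy $\phi_H$ followed by $\phi_{sf}$) and observes that every Hamiltonian chord of this isotopy from $L_0$ to $L_0$ has $\phi_H^1(\gamma(0))\in\phi_H^1(L_0)$, which is disjoint from $\mathrm{supp}(f)$; hence the chords, and therefore $\Spec(\phi_{sf}^1\phi_H^1)$, are independent of $s$. Since $\ell_+$ is continuous in $s$ and takes values in this fixed nowhere dense set, $s\mapsto\ell_+(\phi_{sf}^1\phi_H^1)$ is constant, so $\ell_+(\phi_f^1\phi_H^1)=\ell_+(\phi_H^1)$. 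The triangle inequality and duality then give
\[
\max f=\ell_+(\phi_f^1)\le \ell_+(\phi_f^1\phi_H^1)+\ell_+\bigl((\phi_H^1)^{-1}\bigr)=\ell_+(\phi_H^1)-\ell_-(\phi_H^1)=\gamma(L',L_0),
\]
and taking the supremum over $f$ finishes the proof. This asymmetric use of $\ell_+$ alone (rather than of the symmetric distance $\gamma$) is exactly how the one-sidedness you correctly identified gets exploited; if you want to salvage your write-up, replace the appeal to a filtered continuation map by this constancy-of-spectrum argument.
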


A special case of this specific inequality appears in Barraud--Cornea \cite{barraud-cornea07} and Charette \cite{charette12}. A similar inequality is worked out in Borman--McLean \cite{BMcL}.\\

Finally, we will need an inequality which provides an upper bound for the spectral distance. Let $g$ denote a Riemannian metric on a closed manifold $L$ and denote by $T^*_rL = \{ (q,p) \in T^*L \,|\, \Vert p \Vert_g \leq r \}$ the cotangent ball bundle of radius $r$.   Suppose that $\phi^t_H(L_0) \subset T^*_rL$ for all $t\in [0,1]$.  Viterbo has conjectured \cite{viterbo3} that there exists a constant $C>0$, depending on $g$, such that $\gamma(\phi^1_H(L_0), L_0) \leq C r$.  This conjecture has many important ramifications; see \cite{MVZ12,viterbo3}. Lemma \ref{lemm:haus_cont_gamma} below is a special case of Viterbo's conjecture; a more precise version of the lemma appears in \cite[Theorem 9.7]{Oh3}.
\begin{lemma}\label{lemm:haus_cont_gamma}
Let $L$ be a smooth closed manifold, $\m V$ a proper open subset of $L$, and $V = \pi^{-1}(\m V) \subset T^*L$, where $\pi \co T^*L \rightarrow L$ is the standard projection. There exists $C>0$, depending on the set $\m V$, such that: For all $r>0$, if $H$ is a smooth, compactly supported Hamiltonian on $T^*L$ such that  $H|_V = 0$, and $\phi^t_H(L_0) \subset T^*_rL$ for all $t\in [0,1]$ then
$\gamma(\phi^1_H(L_0), L_0) \leq C r.$
\end{lemma}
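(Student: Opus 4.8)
The plan is to reduce the spectral distance to the oscillation of a Lipschitz \emph{graph selector} for $L' := \phi_H^1(L_0)$, and then to bound that oscillation by $r$ times a purely geometric constant. Since $L'$ is the image of the zero section under a Hamiltonian isotopy, it admits a generating function quadratic at infinity and hence (following Sikorav, Viterbo and Oh) a Lipschitz graph selector $u \co L \to \R$ with the property that $(q, du(q)) \in L'$ at every point $q$ where $u$ is differentiable. The fact I would borrow from the theory of Lagrangian spectral invariants --- this is the content of \cite[Theorem 9.7]{Oh3} --- is that the two spectral invariants entering $\gamma$, namely those associated with the fundamental class and the point class, lie between $\min_L u$ and $\max_L u$; consequently
$$\gamma(L', L_0) \leq \osc(u) = \max_L u - \min_L u.$$
It then suffices to produce the bound $\osc(u) \leq C r$.

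The first geometric input controls the Lipschitz constant of $u$. Since by hypothesis $\phi_H^t(L_0) \subset T^*_rL$ for all $t$, in particular $L' \subset T^*_rL$, so wherever $u$ is differentiable the corresponding covector $(q, du(q)) \in L'$ satisfies $\Vert du(q) \Vert_g \leq r$. Hence $u$ is $r$--Lipschitz with respect to $g$. The second input uses the boundary condition $H|_V = 0$: because $H$ vanishes identically on the open set $V = \pi^{-1}(\m V)$, its differential, and therefore the Hamiltonian vector field $X_H^t$, vanishes on $V$, so the flow $\phi_H^t$ fixes every point of $V$. In particular it fixes $L_0 \cap V$ pointwise, whence $L'$ coincides with $L_0$ over $\m V$. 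Consequently $du = 0$ on $\m V$, and $u$ is locally constant there.

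Combining the two inputs finishes the argument. Normalizing $u$ to vanish on (a component of) $\m V$, the $r$--Lipschitz bound gives $|u(q)| \leq r\, d_g(q, \m V)$ for every $q \in L$, whence $\osc(u) \leq 2\, r\, \sup_{q\in L} d_g(q, \m V)$. One may therefore take $C := 2 \sup_{q\in L} d_g(q, \m V)$, a constant depending only on $\m V$ and the fixed background metric $g$ (more crudely, $C = 2\,\mathrm{diam}_g(L)$ also works); the properness of $\m V$ guarantees nothing more than that the normalization is harmless.

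The main obstacle is the first paragraph: establishing the existence of the Lipschitz graph selector and, crucially, the identification of the spectral invariants with its critical values, so that $\gamma$ is genuinely squeezed by $\osc(u)$ rather than merely by $\Vert H \Vert$ --- the latter being useless here, since $H$ may be arbitrarily large even while its flow keeps $L_0$ inside $T^*_rL$. This is exactly where I would invoke the finer statement of \cite[Theorem 9.7]{Oh3}; the remaining steps are elementary once the selector is in hand. I would also take care to reconcile the conventions (normalization and sign of the spectral invariants, the additive ambiguity of $u$ which is fixed by the anchoring on $\m V$), and to confirm that the hypothesis that the \emph{whole} isotopy stays in $T^*_rL$ --- not merely its time--one image --- is what the selector construction actually needs.
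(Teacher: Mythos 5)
There is a genuine gap, and it sits exactly where you suspected it might: the inequality $\gamma(\phi_H^1(L_0),L_0)\leq \osc(u)$ for a graph selector $u$ of $L'=\phi^1_H(L_0)$ is not available, and in fact the standard relation between graph selectors and spectral invariants goes the \emph{other} way. The selector is built from a generating function $S$ quadratic at infinity by taking, over each $q\in L$, the fiberwise minimax value $u(q)=c(T_q,S_q)$; including a fiberwise relative cycle into the total space shows $\ell_-(L',L_0)\leq u(q)$ for every $q$, while intersecting a cycle representing the fundamental class with the fiber over $q$ shows $u(q)\leq \ell_+(L',L_0)$. Hence $\ell_-\leq \min_L u\leq \max_L u\leq \ell_+$, i.e.\ $\osc(u)\leq\gamma(L',L_0)$ — the reverse of what your argument needs. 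A decisive sanity check that something must be wrong: your use of the hypotheses is essentially cosmetic. The $r$--Lipschitz bound on $u$ needs only $L'\subset T^*_rL$ (neither $H|_V=0$ nor the condition on the whole isotopy is used except to sharpen the constant, since $\osc(u)\leq r\cdot\mathrm{diam}_g(L)$ already follows from the Lipschitz bound alone). So if $\gamma\leq\osc(u)$ were true, you would have proved $\gamma(L',L_0)\leq C r$ for \emph{every} $L'\subset T^*_rL$ Hamiltonian isotopic to $L_0$ — that is, Viterbo's conjecture in full generality, which the paper explicitly presents as open. The citation of \cite[Theorem 9.7]{Oh3} does not rescue this: whatever its precise form, it cannot assert the inequality $\gamma\leq\osc(u)$ for an arbitrary selector. (The auxiliary observations are fine: $\phi^t_H$ does fix $V$ pointwise, $L'$ does coincide with $L_0$ over $\m V$, and $u$ is indeed locally constant there — but these feed into a step that fails.)

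For contrast, the paper's proof uses both hypotheses essentially and avoids selectors altogether. Since the whole isotopy stays in $T^*_rL$, one may cut $H$ off to be supported in $T^*_{2r}L\setminus V$ without changing $\phi^1_H(L_0)$. One then picks a Morse function $f$ on $L$ with all critical points in $\m V$ and $\Vert df\Vert_g\geq 1$ on $L\setminus\m V$, and lets $F$ be a cutoff of $\pi^*f$; near $T^*_rL$ its flow is the fiberwise translation $(q,p)\mapsto(q,p+t\,df(q))$, so for $r<t\leq 2r$ the image $\phi^t_F(L_0)$ has left the support of $H$ (over $\m V$ because $H$ vanishes there, over $L\setminus\m V$ because the translation has pushed the fiber past height $r$). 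Hence $\phi^1_H\phi^{2r}_F(L_0)=\phi^{2r}_F(L_0)$, and the triangle inequality gives $\gamma(\phi^1_H(L_0),L_0)\leq 2\gamma(\phi^{2r}_F)\leq 4r\,\osc(f)$. If you want to salvage your approach, you would need a genuinely new upper bound for $\gamma$ in terms of selector data, which is precisely the hard content of Viterbo's conjecture.
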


\subsection*{Organization of the paper}

In Section \ref{sec:preliminaries}, we review the preliminaries on $C^0$--Hamiltonian dynamics and Lagrangian spectral invariants. 
In Section \ref{sec:proof-prop}, we prove energy-capacity inequalities (Lemmas \ref{lemm:our-inequality} and \ref{lemm:LiRi-inequality}) as well as the upper bound on the spectral distance (Lemma \ref{lemm:haus_cont_gamma}).
In Section \ref{sec:proof-main-theo-Lagr}, we use these inequalities in order to prove localized versions of Theorem \ref{theo:coiso-unique} in the special case of Lagrangians. 
In Section \ref{sec:coisotr-subm-foliat}, we prove Theorems \ref{theo:smooth-C0-coiso-is-coiso}, \ref{theo:coiso-unique}, and \ref{theo:emman_question} using the results of Section \ref{sec:proof-main-theo-Lagr}.
In Section \ref{sec:C0_coisotropics}, we define $C^0$--coisotropic submanifolds and their characteristic foliations.  In the same section, we provide examples of such $C^0$--objects.  

In Appendix \ref{sec:smooth-c0-lagrangian}, we provide relatively simple, and hopefully enlightening, proofs of Theorems \ref{theo:smooth-C0-coiso-is-coiso} and \ref{theo:coiso-unique} in the special case of closed Lagrangians in cotangent bundles. We hope that this appendix will give the reader an idea of the proofs of the main results while avoiding the technicalities of Sections \ref{sec:proof-main-theo-Lagr} and \ref{sec:coisotr-subm-foliat}.

\subsection*{Aknowledgements}
We thank Samuel Lisi and Tony Rieser for sharing their work with us before it was completed and for related discussions. The inspiration for this paper came partly from an unpublished work by Lev Buhovsky and Emmanuel Opshtein, whom we also thank. We are especially grateful to Emmanuel Opshtein for generously sharing his ideas and insights with us through many stimulating discussions. We are also grateful to Alan Weinstein for interesting questions and suggestions. Lastly we would like to thank the anonymous referees for pointing out several inaccuracies and for many helpful suggestions which have improved the exposition.

This work is partially supported by the grant ANR-11-JS01-010-01.  The research leading to these results has received funding from the European Research Council under the European Union's Seventh Framework Programme (FP/2007-2013) / ERC Grant Agreement  307062.

% ~\\

% Localization. {\color{blue} Theorem to be re-stated.}

% \begin{theo}\label{theo:local-smooth-implies-lagr}
%   Let $(M,\omega)$ be a symplectic manifold and $W$ be a $n$--dimensional smooth submanifold. If $W$ is Lagrangian \emph{near $p$}, so is $\psi(W)$ \emph{near $\psi(p)$} for all $\psi\in\Sympeo$.
% \end{theo}

% #############################################
%
%   SECTION
%
% #############################################

\section{Preliminaries}
\label{sec:preliminaries}

\subsection{Symplectic and Hamiltonian homeomorphisms}
\label{sec:sympl-hamilt-home}

In this section we give precise definitions for symplectic and Hamiltonian homeomorphisms and recall a few basic properties of the theory of  continuous Hamiltonian dynamics developed by Müller and Oh \cite{muller-oh}.

Given two manifolds $M_1$, $M_2$, a compact subset $K\subset M_1$, a Riemannian distance $d$ on $M_2$, and two maps $f,g \co M_1\to M_2$, we denote  
$$d_K(f,g)=\sup_{x\in K}d(f(x),g(x)).$$
We say that a sequence of maps $f_i \co M_1\to M_2$ $C^0$--converges to some map $f \co M_1\to M_2$, if for every compact subset $K\subset M_1$, the sequence $d_K(f_i,f)$ converges to 0. This notion does not depend on the choice of the Riemannian metric.

\begin{definition}
  \label{def:sympeo} Let $(M_1,\omega_1)$ and $(M_2,\omega_2)$ be symplectic manifolds.  
	 A continuous map $\theta \co U\to M_2$, where $U \subset M_1$ is open,  is called symplectic if it is the $C^0$--limit of a sequence of symplectic diffeomorphisms $\theta_i \co U\to \theta_i(U)$.    

Let $U_1\subset M_1$ and $U_2\subset M_2$ be open subsets. If a homeomorphism $\theta \co U_1\to U_2$ and its inverse $\theta^{-1}$ are both symplectic maps, we call $\theta$ a symplectic homeomorphism.
\end{definition}

Clearly, if $\theta$ is a symplectic homeomorphism, so is $\theta^{-1}$. By the Gromov--Eliashberg Theorem a symplectic homeomorphism which is smooth is a symplectic diffeomorphism. 

\begin{remark}\label{remark:radical-definition} More generally, one can define a symplectic homeomorphism to be a homeomorphism which is locally a $C^0$--limit of symplectic diffeomorphisms. For simplicity, we do not use this more general definition, however it is evident from the proof of Theorem \ref{theo:smooth-C0-coiso-is-coiso} that it does hold for such homeomorphisms as well.
\end{remark}

We now turn to the definition of Hamiltonian homeomorphisms (called hameomorphisms) introduced by Müller and Oh \cite{muller-oh}.

\begin{definition}
  \label{def:hameo}Let $(M,\omega)$ be a symplectic manifold and $I\subset\R$ an interval.
An isotopy $(\phi^t)_{t\in I}$ is called a hameotopy if there exist a compact subset $K\subset M$ and a sequence of smooth Hamiltonians $H_i$ supported in $K$ such that:
\begin{enumerate}
 \item The sequence of flows $\phi_{H_i}^t$ $C^0$--converges to $\phi^t$ uniformly in $t$ on every compact subset of $I$,
 \item the sequence $H_i(t,\cdot)$ $C^0$--converges to a continuous function $H(t,\cdot)$ uniformly in $t$ on every compact subset of $I$.
\end{enumerate}
We say that $H$ generates $\phi^t$, denote $\phi^t=\phi_H^t$, and call $H$ a continuous Hamiltonian. We denote by $C^0_{\Ham}(M,\omega)$ (or just $C^0_{\Ham}$) the set of all continuous functions $H\co [0,1]\times M\to\R$ which generate hameotopies parametrized by $[0,1]$.
A homeomorphism is called a hameomorphism if it is the time--$1$ map of some hameotopy parametrized by $[0,1]$.
\end{definition}

A continuous function $H \in C^0_{\Ham}$ generates a unique hameotopy \cite{muller-oh}. Conversely, Viterbo \cite{viterbo2} and Buhovsky--Seyfaddini \cite{buhovsky-seyfaddini} proved that a hameo\-to\-py has a unique (up to addition of a function of time) continuous generator. 

One can easily check that generators of hameotopies satisfy the same composition formulas as their smooth counterparts. Namely, if $\phi_H^t$ is a hameotopy, then $(\phi_H^t)^{-1}$ is a hameotopy generated by $-H(t,\phi^t_H(x))$; given another hameotopy $\phi_K^t$, the isotopy $\phi_H^t \phi_K^t$ is also a hameotopy, generated by $H(t,x)+K(t,(\phi^t_H)^{-1}(x))$.

Moreover, we will repeatedly use the following simple fact: If $H\in C^0_{\Ham}(V)$ for some open set $V$ in a symplectic manifold $(M,\omega)$ and if $\theta \co U\to V$ is a symplectic homeomorphism, then $H\circ\theta$ belongs to $C^0_{\Ham}(U)$ and generates the hameotopy $\theta^{-1}\phi_H^t\theta$. This, in particular, holds for smooth $H \co [0,1] \times M\to \R$ supported in $V$.

\subsection{Lagrangian spectral invariants}
\label{sec:lagr-spectr-invar}

In \cite{viterbo1}, Viterbo defined Lagrangian spectral invariants on $\bb R^{2n}$ and cotangent bundles via generating functions. Then Oh \cite{Oh97} defined similar invariants via Lagrangian Floer homology in cotangent bundles which have been proven to coincide with Viterbo's invariants by Milinkovi\'c \cite{milinkovic00}. They have been adapted to the compact case by Leclercq \cite{Leclercq08} for weakly exact Lagrangians and Leclercq--Zapolsky \cite{LZ} for monotone Lagrangians. However, for the type of problems which we consider here ($C^0$--convergence of Lagrangians), we can restrict ourselves to Weinstein neighborhoods and thus work only in cotangent bundles. We briefly outline below the construction of these invariants via Lagrangian Floer homology and collect their main properties in this situation. We refer to Monzner--Vichery--Zapolsky \cite{MVZ12} which gives a very nice exposition of the theory. % (as well as many applications in various areas).

Let $L$ be a smooth compact manifold, $L_0$ denote the 0--section in $T^*L$, and $\lambda$ the Liouville 1--form. To a compactly supported smooth time-dependent Hamiltonian $H\in C^\infty_c([0,1]\times T^*L)$ is associated the action functional
\begin{align*}
  \m A_H \co \Omega(T^*L) \rightarrow \bb R \;, \quad \gamma \mapsto \int_0^1 H_t(\gamma(t))\, dt - \int \gamma^*\lambda
\end{align*}
where $\Omega(T^*L)=\{ \gamma\co [0,1] \rightarrow T^*L \,|\, \gamma(0) \in L_0, \; \gamma(1) \in L_0 \}$. The critical points of $\m A_H$ are the chords of the Hamiltonian vector field $X_H$ which start and end on $L_0$. The spectrum of $\m A_H$, denoted by $\Spec(\m A_H)$, consists of the critical values of $\m A_H$. It is a nowhere dense subset of $\bb R$ which only depends on the time--1 map $\phi_H^1$, hence we sometimes denote it by $\Spec(\phi_H^1)$. 

Following Floer's construction, for a generic choice of Hamiltonian function, $\mathrm{crit}(\m A_H)$ is finite and one can form a chain complex $(CF_*(H),\partial_{H,J})$ whose generators are the critical chords and whose differential counts the elements of the 0--dimensional component of moduli spaces of Floer trajectories (i.e pseudo-holomorphic curves perturbed by $H$) which run between the critical chords (with boundary conditions on $L_0$). The differential relies on the additional data of a generic enough almost complex structure, $J$.

This complex is filtered by the values of the action, that is, for $a\in\bb R$ a regular value of $\m A_H$, one can consider only chords of action less than $a$. Such chords generate a subcomplex of the total complex $CF^a_*(H)$ (because the action decreases along Floer trajectories). We denote by $i^a_*$ the inclusion $CF^a_*(H) \rightarrow CF_*(H)$. By considering homotopies between pairs $(H,J)$ and $(H',J')$, one can canonically identify the homology induced by the respective Floer complexes $H_*(CF(H),\partial_{H,J})$ and  $H_*(CF(H'),\partial_{H',J'})$ and by considering $C^2$--small enough Hamiltonian functions, one can see that the resulting object $HF_*(L_0)$ is canonically isomorphic to the singular homology of $L$.

Thus, one can consider spectral invariants associated to any non-zero homology class $\alpha$ of $L$, defined as the smallest action level which detects $\alpha$:
\begin{align*}
  \ell(\alpha;H) = \inf \{ a\in \bb R \,|\, \alpha \in \mathrm{im}(H_*(i^a)) \}
\end{align*}
In what follows we will only be interested in the spectral invariants  associated to the class of a point and the fundamental class which will be respectively denoted by $\ell_-(H)=\ell([\mathrm{pt}];H)$ and $\ell_+(H)=\ell([L];H)$.

These invariants were proven to be continuous with respect to the $C^0$--norm on Hamiltonian functions so that they are defined for any (not necessarily generic) Hamiltonian. Moreover, they only depend on the time--1 map $\phi^1_H$ induced by the flow of $H$; hence they are well-defined on $\Ham^c(T^*L,d\lambda).$\\

Their main properties are collected in the following theorem, which corresponds to \cite[Theorem 2.20]{MVZ12}, except for \textit{(\ref{si:for-slow-ham})} which we prove below. Note that, except for \textit{(\ref{si:cst-on-Lagr})} and \textit{(\ref{si:for-slow-ham})}, these properties already appear in Viterbo \cite{viterbo1}.

\begin{theo} \label{theo:spectral-properties}
 Let $L$ be a smooth closed connected manifold. Let $L_0$ denote the 0--section of $T^*L$. There exist two maps $\ell_\pm \co \Ham^c(T^*L,d\lambda)\rightarrow  \bb R$ with the following properties:
  \begin{enumerate}
  \item For any $\phi\in\Ham^c(T^*L,d\lambda)$, $\ell_\pm(\phi)$ lie in $\mathrm{Spec}(\phi)$.\label{si:crit-val}
  \item $\ell_- \leq \ell_+$.\label{si:ineq-ellmoins-ellplus}
  \item For any two Hamiltonian functions $H$ and $K$, \\
$\int_0^1 \min(H_t-K_t)\, dt \leq  \ell_-(\phi^1_H) - \ell_-(\phi^1_K) \leq \int_0^1 \max(H_t-K_t)\, dt$, and $\int_0^1 \min(H_t-K_t)\, dt \leq  \ell_+(\phi^1_H) - \ell_+(\phi^1_K) \leq \int_0^1 \max(H_t-K_t)\, dt$.\label{si:cont-for-Hofer}
  \item For any $\phi$ and $\phi'\in \Ham^c(T^*L,d\lambda)$, $\ell_+(\phi\phi') \leq \ell_+(\phi) + \ell_+(\phi')$.\label{si:triang-ineq}
  \item For any $\phi\in \Ham^c(T^*L,d\lambda)$, $\ell_\pm(\phi)=-\ell_\mp(\phi^{-1})$.\label{si:duality}
  \item If $H|_{L_0}\leq c$ (respectively $H|_{L_0}\geq c$ or $H|_{L_0} = c$), then $\ell_\pm(\phi^1_H) \leq c$ (respectively $\ell_\pm(\phi^1_H) \geq c$ or $\ell_\pm(\phi^1_H) = c$).\label{si:cst-on-Lagr}
%  \item {\color{blue} Not used later a priori.} For all $\phi$ and $\phi'\in \Ham^c(T^*L,d\lambda)$, $|\ell_\pm(\phi)-\ell_\pm(\phi'\phi\phi'^{-1})|\leq \ell_+(\phi')-\ell_-(\phi')$.\label{si:conjug} 
  \item If $f$ is a $L_0$--slow admissible Hamiltonian, then $\ell_+(\phi^1_f)=\max(f|_{L_0})$ and $\ell_-(\phi_f^1)=0$.\label{si:for-slow-ham}
  \item For any $\phi$ and $\phi'\in \Ham^c(T^*L,d\lambda)$ such that $\phi(L_0)=\phi'(L_0)$, $\ell_+(\phi)-\ell_-(\phi)=\ell_+(\phi')-\ell_-(\phi')$.\label{si:lagr-invar}
  \end{enumerate}
\end{theo}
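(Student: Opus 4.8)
The plan is to prove only property~(\ref{si:for-slow-ham}), since, as the authors note, every other item of Theorem~\ref{theo:spectral-properties} is quoted verbatim from \cite{MVZ12}. The starting point is to use the $L_0$--slow hypothesis to control the action spectrum not just of $f$ but of the whole ray $sf$, $s\in[0,1]$. Because $f$ is autonomous, a Hamiltonian chord of $sf$ of length $1$ is, after reparametrization, a chord of $f$ of length $s\le 1$, and by $L_0$--slowness every such chord is constant. Hence the critical points of $\m A_{sf}$ are exactly the constant chords seated at points of $\Crit:=\{p\in L_0 : X_f(p)=0\}$, and the constant chord at $p$ has action $\int_0^1 sf(p)\,dt=s\,f(p)$. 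Setting $S:=\{f(p) : p\in\Crit\}=\Spec(\m A_f)$, this gives $\Spec(\m A_{sf})=s\,S$ for every $s\in[0,1]$, where $S$ is a fixed nowhere dense subset of $\R$ that contains $\max(f|_{L_0})$ (admissibility forces the global maximum of $f$ to be attained on $L_0$, at a point of $\Crit$) and contains $0$ with $\min(f|_{L_0})=0$ (as $f$ has compact support and vanishes, together with its differential, on $L_0\setminus\mathrm{supp}(f)$).

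Next I would run a homotopy argument. By property~(\ref{si:crit-val}) the functions $s\mapsto\ell_\pm(\phi^1_{sf})$ take values in $\Spec(\m A_{sf})=s\,S$, and by property~(\ref{si:cont-for-Hofer}), applied to the pair $s'f,sf$ and using $\min f=0$, $\max f=\max(f|_{L_0})$, they are Lipschitz, hence continuous, on $[0,1]$. Therefore the rescaled functions $s\mapsto\ell_\pm(\phi^1_{sf})/s$ are continuous on the connected set $(0,1]$ and take values in $S$. Since $S$ is nowhere dense it contains no interval, so the image of each of these maps is a single point: there exist constants $\sigma_\pm\in S$ with $\ell_\pm(\phi^1_{sf})=\sigma_\pm\,s$ for all $s\in(0,1]$. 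This "constant ratio'' step is the soft heart of the argument and is where slowness is really used, through the exact scaling $\Spec(\m A_{sf})=s\,S$.

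It then remains to identify $\sigma_\pm$, and here I would use the small--parameter limit as an anchor: for $s$ small, $sf$ is $C^2$--small, and it is classical that for a $C^2$--small autonomous Hamiltonian the Floer complex reduces to the Morse complex of its restriction to $L_0$, so that $\ell_+(\phi^1_{sf})=\max(sf|_{L_0})=s\max(f|_{L_0})$ and $\ell_-(\phi^1_{sf})=\min(sf|_{L_0})=s\min(f|_{L_0})=0$. Comparing with $\ell_\pm(\phi^1_{sf})=\sigma_\pm\,s$ yields $\sigma_+=\max(f|_{L_0})$ and $\sigma_-=0$, and evaluating at $s=1$ gives property~(\ref{si:for-slow-ham}). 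I expect the genuine obstacle to be precisely this anchoring step. The purely axiomatic inputs—properties~(\ref{si:ineq-ellmoins-ellplus}) and~(\ref{si:cst-on-Lagr})—only deliver the a priori bounds $0\le\sigma_-\le\sigma_+\le\max(f|_{L_0})$, and since the duality~(\ref{si:duality}) is symmetric under $f\mapsto -f$ it cannot break this symmetry; singling out the extreme values $\sigma_+=\max(f|_{L_0})$ and $\sigma_-=0$ from the interior of the interval genuinely requires the $C^2$--small Morse computation, i.e.\ the fact that the constant chord at the maximum carries the fundamental class and the one at the minimum the point class. Everything else in the proof is formal.
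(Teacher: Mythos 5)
Your restriction of attention to item~(\ref{si:for-slow-ham}) is right, and the first half of your argument --- the exact scaling $\Spec(\m A_{sf})=s\cdot\Spec(\m A_f)$ for an $L_0$--slow autonomous $f$, the nowhere-density of the spectrum, and Lipschitz continuity in $s$ forcing $\ell_\pm(\phi^1_{sf})=\sigma_\pm s$ --- is exactly the paper's argument. You also correctly predict that the whole difficulty sits in the anchoring step. But your resolution of that step has a genuine gap. The assertion that for a $C^2$--small autonomous Hamiltonian the Floer complex ``reduces to the Morse complex of its restriction to $L_0$,'' with $\ell_\pm$ equal to the extrema of $H|_{L_0}$, is not a correct statement of the classical comparison. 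First, the generators are Hamiltonian chords: for a slow autonomous $f$ these are the points of $\Crit(f)\cap L_0$, in general a proper (and possibly degenerate) subset of $\Crit(f|_{L_0})$, so no Morse complex of $f|_{L_0}$ appears. Second, the actual $C^2$--small comparison identifies the complex with the Morse complex of the generating function $S_s$ of the graph $\phi^1_{sf}(L_0)=\Gamma_{dS_s}$ and yields $\ell_+-\ell_-=\osc(S_s)$; but $\osc(S_s)$ agrees with $\osc(sf|_{L_0})$ only to first order in $s$, so the exact equalities $\ell_\pm(\phi^1_{sf})=\max(sf|_{L_0})$, $\min(sf|_{L_0})$ at a fixed small $s$ --- which is what you assert --- are false for general autonomous Hamiltonians and unproven here.

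The paper fills precisely this hole with a non-formal external input: Milinkovi\'c's theorem that an autonomous Hamiltonian generates a Hofer geodesic for small times, i.e.\ $\max(sf)-\min(sf)=d_{\mathrm{Hof}}(L_0,\phi^s_f(L_0))$, combined with $d_{\mathrm{Hof}}(L_0,\Gamma_{dS})\leq\osc(S)=\ell_+(\phi^1_{\tilde S})-\ell_-(\phi^1_{\tilde S})$ and the invariance property~(\ref{si:lagr-invar}) to convert this into the lower bound $\osc(sf)\leq\gamma(\phi^1_{sf})$; together with the bounds $\min(sf)\leq\ell_-\leq\ell_+\leq\max(sf)$ from property~(\ref{si:cst-on-Lagr}) this pins down $\ell_-=\min(sf)=0$ and $\ell_+=\max(sf)$. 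Your argument could be repaired in the same spirit: since you already know $\ell_\pm(\phi^1_{sf})=\sigma_\pm s$ exactly, a first-order asymptotic $\gamma(\phi^1_{sf})=s\,\osc(f|_{L_0})+o(s)$ would suffice to identify $\sigma_+-\sigma_-$ and hence $\sigma_\pm$. But some such quantitative input --- the geodesic property, or a controlled expansion of the generating function --- must be supplied; as you yourself observe, the monotonicity and duality axioms alone cannot single out the endpoints of the interval.
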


\begin{proof}[Proof of item \textit{(\ref{si:for-slow-ham})}]
Since $f$ is $L_0$--slow, $\mathrm{Spec}(\phi_f^1)$ consists of critical values of $f$ corresponding to critical points lying in $L_0$. Now for all $s\in [0,1]$, since $f$ is autonomous, $\mathrm{Spec}(\phi_f^s)=\mathrm{Spec}(\phi_{sf}^1)=s\cdot \mathrm{Spec}(\phi_f^1)$. Since in cotangent bundles spectral invariants lie in the spectrum regardless of degeneracy of $f$, by continuity of $\ell_\pm$ there exist $p_\pm\in \mathrm{crit}(f)\cap L_0$ such that $\ell_\pm(\phi_f^s)=s\cdot f(p_\pm)$. Now we claim that for small times $s$, $\ell_+(\phi_{sf}^1)$ (respectively $\ell_-(\phi_{sf}^1)$) is the maximum (respectively the minimum) of $sf$ so that $f(p_+)=\max (f)$ (respectively $f(p_-)=\min (f)$) which concludes the proof.

It remains to prove the claim. 
Since $f$ is autonomous, the path of Lagrangians it generates is a geodesic with respect to Hofer's distance for small times, see Milinkovi\'c \cite[Theorem 8]{milinkovic01}. That is, for $s\in (0,1)$ small enough
\begin{align}\label{eq:milinkovic-geodesic-path}
 \max(sf)-\min(sf) =  \mathrm{length}(\{ L_t \}_{t\in [0,s]}) = d_{\mathrm{Hof}}(L_0,L_{s})
\end{align}
with $L_t = \phi_{f}^t(L_0)$. %=\phi_{sf}^1(L_0)$.
(As $f$ reaches its extrema on $L_0$, the extrema of $sf$ over $T^*L$ and $L_0$ coincide and we remove them from the notation.)
On the other hand, by choosing $s$ small enough we can ensure that %$L_s$ is included in a $C^1$--neighborhood of $L_0$ small enough so that 
$L_s=\Gamma_{\!dS}$, the graph of the differential of some smooth function $S \co L \rightarrow \bb R$.
Now in general $d_{\mathrm{Hof}}(L_0,\Gamma_{\!dS})\leq \osc (S)$ and by \cite[Proposition 1.(6)]{milinkovic01}: $\osc(S)=\ell_+(\phi_{\tilde S}^1)-\ell_-(\phi_{\tilde S}^1)$ where $\tilde S$ lifts $S$ to $T^*L$ by pullback via the natural projection and cutoff far from the Lagrangian isotopy. % (The sign difference in \cite{milinkovic01} is due to a different convention in the definition of spectral invariants.)
Since $\phi_{\tilde{S}}^1(L_0)=\Gamma_{\! dS}$, by Property \textit{(\ref{si:lagr-invar})}, we can replace $\phi_{\tilde S}^1$ by $\phi_{sf}^1$ thus 
% get that  $d_{\mathrm{Hof}}(L_0,\Gamma_{\!dS})\leq \ell_+ (\phi_{sf}^1) - \ell_- (\phi_{sf}^1)$ which, together with
\eqref{eq:milinkovic-geodesic-path} leads to the inequality $\max(sf)-\min(sf) \leq \ell_+ (\phi_{sf}^1) -  \ell_- (\phi_{sf}^1)$.
Finally, since $\min(sf) \leq \ell_-(\phi_{sf}^1)\leq \ell_+(\phi_{sf}^1)\leq \max(sf)$, we deduce that $\ell_-(\phi_{sf}^1)=\min(sf)$ and $\ell_+(\phi_{sf}^1)=\max(sf)$. 
\end{proof}

In view of Property \textit{(\ref{si:lagr-invar})}, Viterbo (followed by Oh) derived an invariant $\gamma$ of Lagrangians Hamiltonian isotopic to the 0--section, defined as follows.
\begin{definition} For a Hamiltonian diffeomorphism $\phi\in\Ham^c(T^*L,d\lambda)$ we set $\gamma(\phi)=\ell_+(\phi)-\ell_-(\phi)$ and for a Lagrangian $L$ Hamiltonian isotopic to the 0--section, we set $\gamma(L,L_0)=\gamma(\phi)$ for any $\phi\in\Ham^c(T^*L,d\lambda)$ such that $\phi(L_0)=L$.
\end{definition}
From the properties of spectral invariants, it is immediate that for all $\phi$ and $\psi\in \Ham^c(T^*L,d\lambda)$, $0\leq\gamma(\phi\psi)\leq \gamma(\phi) + \gamma(\psi)$, and that $\gamma(\phi)=\gamma(\phi^{-1})$. %, and that $\gamma$ is bounded from above by Hofer's norm. 
% Defining $\gamma(\phi(L_0),\phi'(L_0))=\gamma(\phi'^{-1}\phi(L_0),L_0)$, we get a distance 
Moreover, $\gamma(L, L_0)=0$ implies $L=L_0$ as proven in \cite{viterbo1}. % (this fact can be inferred from energy-capacity inequalities like Lemmas \ref{lemm:our-inequality} and \ref{lemm:LiRi-inequality}). 

  One of the main properties of $\gamma$ which will be used in what follows is the fact that
\begin{align}
  \label{eq:gamma-bdd-osc-on-L}
  \gamma(\phi_H^1(L_0),L_0) = \gamma(\phi_H^1) \leq \max_{t\in[0,1]} (\osc (H_t|_{L_0}))
\end{align}
where $\displaystyle \osc (H_t|_{L_0}) =  \max_{L_0} (H_t) - \min_{L_0} (H_t)$.  This inequality can be directly derived from Property \textit{(\ref{si:cst-on-Lagr})} of Theorem \ref{theo:spectral-properties}.  Note that this yields $$\displaystyle \forall t_0 \in \bb R, \;\;\gamma(\phi_H^{t_0}(L_0),L_0) \leq t_0 \cdot \max_{t\in [0, t_0]} (\osc (H_t|_{L_0})).$$ 

% #############################################
%
%   SECTION
%
% #############################################

\section{Energy-capacity inequalities}
\label{sec:proof-prop}

\subsection{The energy-capacity inequality for Hamiltonians constant on open sets}
\label{sec:hamitl-const-open}

We prove Lemma \ref{lemm:our-inequality} by mimicking the proof of the corresponding inequality in \cite{HLS12} (here for Lagrangians, in the easier world of aspherical objects). Then we prove a corollary which will be used in the proof of the main result.

\begin{proof}[Proof of Lemma \ref{lemm:our-inequality}]
First, assume that $\ell_-(\phi_H^1) \leq 0$.  Then for any admissible $L_0$--slow function with support in $U_+$, $f\in \m H(U_+)$, we define the $1$--parameter family of Hamiltonians $H_s(t,x)=H(t,x)-sf(x)$ with $s\in [0,1]$. Since $H$ is constant on $U_+$, $H_s$ generates $\phi_{H_s}^1=\phi_H^1 \phi_f^{-s}$. By triangle inequality and duality (i.e Properties \textit{(\ref{si:triang-ineq})} and \textit{(\ref{si:duality})}) of spectral invariants, we get
\begin{align}\label{eq:avant-derniere-etape}
  \ell_+(\phi_f^{s})\leq  \ell_+(\phi_{H_s}^{-1})+ \ell_+(\phi_{H}^1)=\ell_+(\phi_{H}^1)-\ell_-(\phi_{H_s}^1) \;.
\end{align}
Then notice that
\begin{align*}
  \mathrm{Spec}(\m A_{H_s})=\mathrm{Spec}(\m A_{H})\cup \{ C_+ -sf(p) \,|\, p\in\mathrm{crit}(f)\cap U_+ \} \;.
\end{align*}
Since for all $p\in \mathrm{crit}(f)\cap U_+$, $sf(p)\leq \max (f)\leq \chzrel(U_+;L_0)< C_+$, % the second term of the union is included in $\bb R^*_+$
 there exists $\varepsilon >0$, such that $\mathrm{Spec}_{\varepsilon}(\m A_{H_s})$, defined as $\mathrm{Spec}(\m A_{H_s})\cap (-\infty,\varepsilon)$, does not depend on $s$ and coincides with $\mathrm{Spec}_{\varepsilon}(\m A_{H})$ which is totally discontinuous. Since the map $s\mapsto \ell_-(\phi_{H_s}^1)$ is continuous and maps $0$ to $\mathrm{Spec}_{\varepsilon}(\m A_{H})$, it has to be constant so that $\ell_-(\phi_{H_1}^1)=\ell_-(\phi_{H}^1)$ and, from \eqref{eq:avant-derniere-etape}, Property \textit{(\ref{si:for-slow-ham})} of spectral invariants  immediately leads to
\begin{align*}
\max(f) =  \ell_+(\phi_f^1)\leq  \ell_+(\phi_{H}^1)-\ell_-(\phi_{H}^1)=\gamma(\phi_H^1(L_0),L_0).
\end{align*}
Since this holds for any $L_0$--slow function in $\m H(U_+)$, we get $\gamma(\phi_H^{1}(L_0),L_0) \geq \chzrel(U_+;L_0)$.

Now, assume that $\ell_-(\phi_H^1)\geq 0$ and consider $\bar{H}(t,x)=-H(t,\phi_H^t(x))$. By assumption, $\phi_H^t$ is the identity on $U_-$ and  $\bar{H}|_{U_-}=-H|_{U_-}=C_-$. Since $\bar{H}$ generates $\phi_{\bar{H}}^1=(\phi_H^1)^{-1}$,
\begin{align*}
  \ell_-(\phi_{\bar{H}}^1) =  -\ell_+(\phi_H^1)  \leq -\ell_-(\phi_H^1) \leq 0.
\end{align*}
Then the first case gives that $\gamma(\phi_{\bar{H}}^{1}) \geq \chzrel(U_-;L_0)$. Since $\gamma(\phi_{\bar{H}}^{1})=\gamma(\phi_H^1)=\gamma(\phi_H^{1}(L_0),L_0)$, we get that when $\ell_-(\phi_H^1)\geq 0$, $\gamma(\phi_{H}^{1}) \geq \chzrel(U_-;L_0)$.

So $\gamma(\phi_{H}^{1}) \geq \min\{\chzrel(U_-;L_0),\chzrel(U_+;L_0)\}$ regardless the sign of $\ell_-(\phi_H^1)$ which concludes the proof.
\end{proof}

From this (and as in \cite{HLS12}) we infer the same result but for Hamiltonian functions which are allowed to have controlled oscillations on $U_\pm$. 
\begin{corol}\label{corol:nrg-capcity-small-osc} 
Let $L$ be a smooth closed manifold and $U_\pm$ be open subsets of $T^*L$ such that $U_\pm\cap L_0\neq \emptyset$. Let $H$ be a Hamiltonian so that for all $t \in [0,1]$
  \begin{enumerate}
    \item $H_t|_{U_+} > \chzrel(U_+;L_0)$, and $H_t|_{U_-} < -\chzrel(U_-;L_0)$, 
    \item $\osc (H_t|_{U_\pm}) < \varepsilon$, for some $\varepsilon >0$.
 \end{enumerate}
 Then, $\gamma(\phi_H^1(L_0),L_0) \geq \min\{\chzrel(U_-;L_0),\chzrel(U_+;L_0)\} - 2 \varepsilon$.
\end{corol}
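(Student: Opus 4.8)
The plan is to deduce this from Lemma~\ref{lemm:our-inequality} by replacing $H$ with a nearby Hamiltonian $K$ that is genuinely (spatially) constant on slightly shrunken pieces of $U_\pm$, and then to pay for the replacement using the Hofer--continuity of the spectral invariants together with the supremum characterization of $\chzrel$. First I would record the only analytic input needed: for any two compactly supported Hamiltonians $H$ and $K$, applying Property~\textit{(\ref{si:cont-for-Hofer})} to $\ell_+$ and to $\ell_-$ separately and subtracting gives
$$\gamma(\phi_H^1)-\gamma(\phi_K^1) \geq \int_0^1 \min(H_t-K_t)\,dt - \int_0^1 \max(H_t-K_t)\,dt = -\int_0^1 \osc_{T^*L}(H_t-K_t)\,dt.$$
Hence it suffices to construct a $K$ which satisfies the hypotheses of Lemma~\ref{lemm:our-inequality} and for which $\osc_{T^*L}(H_t-K_t) < 2\varepsilon$ for all $t$.

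For the construction, fix open sets $V_\pm$ with $\overline{V_\pm}$ compact, $\overline{V_\pm}\subset U_\pm$, and $V_\pm\cap L_0\neq\emptyset$ (recall $U_+\cap U_-=\emptyset$, since hypothesis~(1) forces $H_t$ to be positive on $U_+$ and negative on $U_-$). Pick cutoffs $\chi_\pm\co T^*L\to[0,1]$ with compact support in $U_\pm$ and $\chi_\pm\equiv 1$ near $\overline{V_\pm}$, set $C_+(t):=\min_{\overline{V_+}}H_t$ and $C_-(t):=-\max_{\overline{V_-}}H_t$, and define
$$K_t := H_t + \chi_+\,(C_+(t)-H_t) + \chi_-\,(-C_-(t)-H_t).$$
Then $K$ is a compactly supported Hamiltonian equal to $H$ off $U_+\cup U_-$, spatially constant equal to $C_+(t)$ on $V_+$ and to $-C_-(t)$ on $V_-$. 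Since every point of the compact set $\overline{V_\pm}\subset U_\pm$ satisfies the strict bound in hypothesis~(1), we get $C_\pm(t) > \chzrel(U_\pm;L_0)\geq \chzrel(V_\pm;L_0)$ by monotonicity of $\chzrel$. Moreover, on $U_+$ the value $C_+(t)$ and each value $H_t(x)$ both lie in the range of $H_t|_{U_+}$, so $|H_t-K_t|=\chi_+\,|H_t-C_+(t)|\leq \osc(H_t|_{U_+})<\varepsilon$ by hypothesis~(2), and likewise on $U_-$, while $H_t=K_t$ elsewhere. Thus $H_t-K_t$ takes values in $(-\varepsilon,\varepsilon)$, giving $\osc_{T^*L}(H_t-K_t)<2\varepsilon$.

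To conclude, note that the proof of Lemma~\ref{lemm:our-inequality} uses the constancy of the Hamiltonian on $U_\pm$ only through the vanishing of its spatial differential there (which makes the relevant flows decouple and pins down the low end of the spectrum); it therefore applies verbatim to $K$ with the possibly time-dependent spatial constants $\pm C_\pm(t)$ and the sets $V_\pm$, yielding $\gamma(\phi_K^1)\geq \min\{\chzrel(V_-;L_0),\chzrel(V_+;L_0)\}$. Combined with the continuity estimate this gives $\gamma(\phi_H^1(L_0),L_0)=\gamma(\phi_H^1) > \min\{\chzrel(V_-;L_0),\chzrel(V_+;L_0)\} - 2\varepsilon$. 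Finally I would let $V_\pm$ exhaust $U_\pm$: since $\chzrel(U_\pm;L_0)$ is the supremum of $\max f$ over $L_0$--slow admissible $f$ of compact support in $U_\pm$, and each such $f$ is admissible for some $V_\pm\subset\subset U_\pm$ meeting $L_0$, one has $\sup_{V_\pm\subset\subset U_\pm}\chzrel(V_\pm;L_0)=\chzrel(U_\pm;L_0)$; taking the supremum over admissible $V_\pm$ yields the claim. The main obstacle is precisely this tension: inserting a transition layer forces one to shrink $U_\pm$ to $V_\pm$, which a priori lowers the capacity, and the remedy is the sup-characterization of $\chzrel$, which recovers the full-set capacities in the limit while the oscillation error stays exactly $2\varepsilon$ (one $\varepsilon$ from each of $U_+$ and $U_-$). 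The secondary point to verify is that Lemma~\ref{lemm:our-inequality} tolerates time-dependent spatial constants, which its proof does.
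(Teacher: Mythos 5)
Your argument is correct and shares the paper's overall strategy -- flatten $H$ on the plateau regions, apply Lemma \ref{lemm:our-inequality} to the flattened Hamiltonian, and charge the correction term to the oscillation hypothesis -- but the execution differs in three places. The paper \emph{enlarges} $U_\pm$ to slightly bigger disjoint open sets $V_\pm\supset U_\pm$ with $\osc_{V_\pm}(H)<\osc_{U_\pm}(H)+\eta$, puts the cutoff transition layer in $V_\pm\setminus U_\pm$, and so keeps the modified Hamiltonian constant on all of $U_\pm$ with the genuine constants $C_+=\inf(H|_{[0,1]\times U_+})$ and $C_-=-\sup(H|_{[0,1]\times U_-})$; Lemma \ref{lemm:our-inequality} then applies exactly as stated, at the cost of an extra $2\eta$ in the error. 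You instead \emph{shrink} to $V_\pm\Subset U_\pm$, which forces the exhaustion step $\sup_{V_\pm\Subset U_\pm}\chzrel(V_\pm;L_0)=\chzrel(U_\pm;L_0)$ at the end (correct, and cleanly justified by the compact-support definition of admissibility), and you use time-dependent plateau values $C_\pm(t)$ to bring the error down to exactly $2\varepsilon$. The price of that choice is the extension of Lemma \ref{lemm:our-inequality} to time-dependent spatial constants, and ``verbatim'' slightly oversells it: in the spectrum computation the new critical values become $\int_0^1 C_+(t)\,dt-sf(p)$, so the hypothesis actually needed is $\int_0^1 C_+(t)\,dt>\max(f)$, which your pointwise bound $C_+(t)>\chzrel(U_+;L_0)\geq\max f$ does supply; also $t\mapsto\min_{\overline{V_+}}H_t$ is in general only Lipschitz, so you should either mollify $C_\pm(t)$ in $t$ (losing an arbitrarily small $\eta$, exactly as the paper does) or invoke the $C^0$-continuity of $\ell_\pm$ in the Hamiltonian. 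Finally, your direct use of Property \textit{(\ref{si:cont-for-Hofer})} applied to $\ell_+$ and $\ell_-$ separately, in place of the paper's triangle inequality $\gamma(\phi_H^1)\geq\gamma(\phi_h^1)-\gamma((\phi_H^1)^{-1}\phi_h^1)$, is a legitimate and mildly simpler route to the same estimate.
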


\begin{proof}
  Fix $\eta >0$. We choose disjoint open subsets $V_\pm$ such that $U_\pm\Subset V_\pm$ (with $\Subset$ denoting compact containment) and\footnote{In the next few lines we loosely denote $\max_{t\in [0,1]} \osc(f_t|_U)$ by $\osc_U(f)$ for readability.} $\osc_{V_\pm}(H) < \osc_{U_\pm}(H) + \eta$. We also choose cut-off functions $\rho_\pm$ with support in $V_\pm$, such that $0\leq \rho_\pm\leq 1$ and $\rho_\pm|_{U_\pm}=1$. Then we define 
\begin{align*}
  h = H - \rho_+(H-C_+) - \rho_-(H+C_-) 
\end{align*}
with $C_+=\inf(H|_{[0,1]\times U_+})$ and $C_-=-\sup(H|_{[0,1]\times U_-})$. By triangle inequality, we get $\gamma(\phi_H^1) \geq   \gamma(\phi_h^1) - \gamma((\phi_H^1)^{-1}\phi_h^1)$ and we now bound the right-hand side terms.

First, notice that $h$ satisfies the requirements of Lemma \ref{lemm:our-inequality}: $h|_{U_\pm} = \pm C_\pm$ with by assumption $C_+  > \chzrel(U_+;L_0), \; \mbox{and} \; C_- > \chzrel(U_-;L_0)$. Thus we immediately get that $\gamma(\phi_h^1)\geq \min\{\chzrel(U_-;L_0),\chzrel(U_+;L_0)\}$. Now by Property \textit{(\ref{si:cont-for-Hofer})} of spectral invariants, 
\begin{align*}
  \gamma((\phi_H^1)^{-1}\phi_h^1) \leq \osc_{T^*L}\big((H-h)\circ (\phi_H)^{-1} \big) \leq \osc_{V_-\cup V_+} (H-h)
\end{align*}
so that % and since $\mathrm{supp}\big((\phi_H^1)^{-1}\phi_h^1\big) \subset V=V_-\cup V_+$. 
\begin{align*}
\gamma((\phi_H^1)^{-1}\phi_h^1) % &\leq \osc_{V} (\rho_+(H-C_+) + \rho_-(H+C_-))\\
&\leq  \osc_{V_+} \big(\rho_+(H-C_+)\big) +  \osc_{V_-} \big(\rho_-(H+C_-)\big)   \\
&\leq  \osc_{V_+} (H) +  \osc_{V_-} (H)  \leq  \osc_{U_+} (H) + \osc_{U_-} (H) + 2\eta \\
&\leq 2 \varepsilon + 2\eta .
\end{align*}
Finally, $\gamma(\phi_H^1(L_0),L_0) \geq \min\{\chzrel(U_-;L_0),\chzrel(U_+;L_0)\} - 2 \varepsilon - 2 \eta$ for any $\eta >0$.
\end{proof}

\subsection{The energy-capacity inequality for Lagrangians displaced from an open set}
\label{sec:lagr-disppl-from}

We give a proof of Lemma \ref{lemm:LiRi-inequality} from Lisi--Rieser \cite{LR}, for the reader's convenience. The method of proof is now classical and goes back to Viterbo \cite{viterbo1} (see also Usher's proof of the analogous result \cite{usher} for compact manifolds, itself heavily influenced by Frauenfelder--Ginzburg--Schlenk \cite{FGS}). Recall that in cotangent bundles the spectral invariants only depend on the endpoints of Hamiltonian isotopies; this drastically simplifies the proof.

\begin{proof}[Proof of Lemma \ref{lemm:LiRi-inequality}]
  Assume that $\phi_H^1(L_0)\cap U =\emptyset$. Choose a $L_0$--slow function $f$ in $\m H(U)$. For $s\in [0,1]$, consider the Hamiltonian diffeomorphism $\phi_{sf}^1\phi_H^1$. It is the end of the isotopy defined as the concatenation
\begin{align*}
  \theta^t = \left\{
    \begin{array}{cl}
      \phi_H^{2t}\,, & {t\in [0,1/2]},\\
      \phi_{sf}^{2t-1}\phi_H^1\,, & {t\in [1/2,1]}.
    \end{array}
\right.
\end{align*}
A Hamiltonian chord of $\theta$ is a path $t\mapsto \gamma(t)$ such that $\gamma(0)\in L_0$, $\gamma(1)\in L_0$ and for all $t$, $\gamma(t)=\theta^t(\gamma(0))$. In particular, for such a chord, $\phi_{sf}^{1}\phi_H^1(\gamma(0)) \in L_0$. However, since by assumption $\phi_H^1(L_0) \cap \mathrm{supp}(sf)=\emptyset$, necessarily $\phi_H^1(\gamma(0))$ is not in the support of $f$ and $\gamma(t)=\phi_H^{2t}(\gamma(0))$ for all $t\leq 1/2$ and remains constant $\gamma(t)=\phi_H^1(\gamma(0))$ for $t\geq 1/2$.

This means that for all $s$, the set of Hamiltonian chords remains constant and so does $\mathrm{Spec}(\phi_{sf}^1\phi_H^1)$. Since this set is nowhere dense and $\ell_+$ is continuous (and takes its values in the action spectrum), the function $s\mapsto \ell_+(\phi_{sf}^1\phi_H^1)$ is constant so that $\ell_+(\phi_H^1)=\ell_+(\phi_{f}^1\phi_H^1)$. Thus,
\begin{align*}
  \ell_+(\phi_f^1) \leq \ell_+(\phi_{f}^1\phi_H^1) + \ell_+((\phi_H^1)^{-1}) = \ell_+(\phi_H^1)-\ell_-(\phi_H^1) = \gamma(\phi_H^1)
\end{align*}
by Properties \textit{(\ref{si:triang-ineq})} and \textit{(\ref{si:duality})} of spectral invariants. Since this holds for any $L_0$--slow function $f\in \m H(U)$ and since for such a function $\max(f) = \ell_+(\phi_f^1)$ by Property \textit{(\ref{si:for-slow-ham})} of spectral invariants, the result follows.
\end{proof}

\begin{remark} \label{LiRi_reform}
Lemma \ref{lemm:LiRi-inequality} can be reformulated as follows:

\textit{Let $L, L_0,$ and $U$ be as in Lemma \ref{lemm:LiRi-inequality}.  Suppose that $H$ is a smooth Hamiltonian such that $L_0\cap  \phi^1_H(U) = \emptyset$. Then, $\gamma(\phi^1_H(L_0), L_0) \geq c_\mathrm{LR}(U; L_0)$.}

The equivalence of the above statement to Lemma \ref{lemm:LiRi-inequality} follows easily from the fact that $\gamma(\phi^1_H(L_0), L_0) = \gamma((\phi^1_H)^{-1}(L_0), L_0)$.
\end{remark}

\begin{remark} \label{osc_LiRi}
Let $L, L_0,$ and $U$ be as in Lemma \ref{lemm:LiRi-inequality}.  Suppose that $H$ is a smooth Hamiltonian such that either  $\phi^1_H(L_0) \cap U = \emptyset$ or $L_0\cap  \phi^1_H(U) = \emptyset$.  Then, it follows from Lemma \ref{lemm:LiRi-inequality}, the previous remark, and Inequality \eqref{eq:gamma-bdd-osc-on-L} that
%\begin{equation} \label{LiRi_osc_version}
   $\max_{t\in[0,1]} (\osc (H_t|_{L_0})) \geq c_\mathrm{LR}(U;L_0).$
%\end{equation}

\end{remark}

\subsection{An upper bound for the spectral distance}
\label{sec:haus_cont_gamma}
In this section we prove Lemma \ref{lemm:haus_cont_gamma} which establishes Viterbo's conjecture in a special case.  The fact that Viterbo's conjecture holds under the additional assumptions of Lemma \ref{lemm:haus_cont_gamma} seems to be well known to experts; we provide a proof here for the sake of completeness. % This lemma will be used in the proof of Proposition \ref{prop:C0-preserved->constant} in Section \ref{sec:preserv-implies-cst}.

\begin{proof}[Proof of Lemma \ref{lemm:haus_cont_gamma}] 
Note that since $\phi^t_H(L_0) \subset T^*_rL$ for all $t\in [0,1]$, modifying $H$ outside of $T^*_rL$ leaves $\phi^1_H(L_0)$, and hence $\gamma(\phi^1_H(L_0), L_0)$, unchanged.  Therefore, by cutting $H$ off outside of $T^*_rL$ and replacing $r$ with $2r$ we may assume that $H$ is supported inside $T^*_rL \setminus V$.

Pick $f \co L \rightarrow \mathbb{R}$ to be a Morse function on $L$ whose critical points are all contained in the open set $\m V$.
Because $f$ has no critical points inside $L \setminus \m V$, we may assume, by rescaling, that 
\begin{equation}\label{norm_df}
\Vert df|_{L\setminus \m V} \Vert_g \geq 1.
\end{equation}

Let $\beta \co T^*L \rightarrow \mathbb{R}$ denote a non-negative cutoff function such that $\beta = 1$ on $T^*_RL$, where $R$ is picked so that $R \gg r$. Let $F= \beta \pi^* f \co T^*L \rightarrow \mathbb{R}.$  By picking $R$ to be sufficiently large, we can ensure that, for $t\in [0,2r]$ and $(q,p)$ in a neighborhood of $T^*_rL$, the Hamiltonian flow of $F$ is given by the formula $$\phi^t_F(q,p) = (q, p+tdf(q)).$$
This, combined with (\ref{norm_df}), implies that $\phi^t_F(q,0) \notin T^*_rL$ for $r < t \leq 2r $ and any point $q \in L \setminus \m V$.
Hence, we see that $\phi^t_F(L_0)$ is outside the support of $H$ for $r < t \leq 2r$.  Therefore, 
$\phi^1_H\phi^{2r}_F(L_0) = \phi^{2r}_F(L_0),$ and so 
$$\gamma(\phi^1_H \phi^{2r}_F(L_0), L_0) = \gamma(\phi^{2r}_F(L_0),L_0). $$
Thus we get,
\begin{align*}
 \gamma(\phi_H^1(L_0),L_0)=\gamma(\phi_H^1) &\leq  \gamma(\phi^1_H \phi^{2r}_F) + \gamma((\phi^{2r}_F)^{-1}) =  \gamma(\phi^{2r}_F) + \gamma(\phi^{2r}_F)\\
  &\leq 4r \,\osc(F) = 4r \,\osc(f)
\end{align*}
and the result follows with $C = 4 \, \osc(f).$
% $$\gamma(\phi^1_H \phi^{2r}_F(L_0), L_0) = \gamma(\phi^{2r}_F(L_0),L_0) \leq 2r \, \osc(F) =  2r \, \osc(f). $$
% The Hamiltonian diffeomorphism $\phi^1_H \phi^{2r}_F$ is the time--$1$ map of the flow of $G(t,x) = H(t,x) + 2rF(\phi^t_H(x))$.
% Using Property \textit{(\ref{si:cont-for-Hofer})} from Theorem \ref{theo:spectral-properties}, we obtain that
% $$\min(2rF) \leq  \ell_\pm(\phi^1_H \phi^{2r}_F) - \ell_\pm(\phi^1_H) \leq \max(2rF),$$
% from which we conclude that $|\gamma(\phi^1_H \phi^{2r}_F(L_0), L_0) - \gamma(\phi^1_H(L_0),L_0) | \leq  2r \,  \osc(F) =  2r \, \osc(f).$
% The result follows with $C = 4 \, \osc(f).$
\end{proof}

% #############################################
%
%   SECTION
%
% #############################################

\section{Localized results for Lagrangians}
\label{sec:proof-main-theo-Lagr}

%Recall from Definition \ref{def: c0-coisotropic} that $L$ is called a $C^0$--Lagrangian if around each point $p \in L$ there exists a \emph{$C^0$--Lagrangian chart}, that is, a symplectic homeomorphism $\theta \co U \rightarrow V\subset \bb R^{2n}$, with $U$ open neighborhood of $p$, so that $\theta (L\cap U)=\m L_0\cap V$ where
  %\begin{align*}
    %\m L_0 = \{ (x_1,\ldots, x_n, y_1,\ldots, y_{n}) \,|\, (y_{1},\ldots, y_{n}) =(0, \ldots,0)  \} \;.
  %\end{align*}

The main goal of this section is to establish a suitable localized version of Theorem \ref{theo:coiso-unique} for Lagrangians; since we seek localized statements we do not assume that the Lagrangians in question are necessarily closed.   Not surprisingly, in this new setting Theorem  \ref{theo:coiso-unique} does not hold as stated.   The localized results of this section, which have more complicated statements and proofs, are more powerful and they constitute the main technical steps towards proving Theorems \ref{theo:smooth-C0-coiso-is-coiso} and \ref{theo:coiso-unique}.

We prove the analog of the direct implication of Theorem \ref{theo:coiso-unique} in Section \ref{sec:cst-implies-preserv}.  The analog of the converse implication is proven in Section \ref{sec:preserv-implies-cst}.  Since $L$ is a Lagrangian, its characteristic foliation has a single leaf, $L$ itself, and thus in this section we make no mention of characteristic foliations. 

\subsection{$C^0$--Hamiltonians constant on a Lagrangian preserve it}
\label{sec:cst-implies-preserv}
In this subsection, we show that if the restriction of $H \in C^0_\Ham$ to a Lagrangian $L$ is a function of time, then the associated hameotopy $\phi_H$ preserves $L$, locally. More precisely,
\begin{prop}\label{prop:c0lag-constant->preserved}
  Let $L \subset M$ denote a Lagrangian (not necessarily closed) and $H\in C^0_\Ham$ with associated hameotopy $\phi_H$. If $H|_{L}=c(t)$, is a function of time, then for any point $p \in L$ there exists $\varepsilon >0$ such that $\phi_H^t(p)\in L $ for all $t \in [0,\varepsilon]$.
\end{prop}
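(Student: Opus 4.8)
The plan is to reduce the statement to the model situation of a cotangent bundle over a \emph{closed} manifold and then run an energy--capacity argument by contradiction, squeezing the spectral distance $\gamma$ between an upper bound coming from the (vanishing) oscillation on the Lagrangian and a lower bound coming from a fixed displaced set.

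\emph{Reduction.} Since the claim is local near $p$ and is unchanged when $H$ is altered by a function of time, I would first replace $H$ by $H-c(t)$, so that $H|_L=0$. By Weinstein's neighborhood theorem there is a symplectic embedding $\Phi$ of a neighborhood $W$ of $p$ onto an open subset of $T^*N$, where $N$ is a closed manifold, sending $L\cap W$ onto an open piece of the zero section $N_0$ and $p$ to a point $x_0\in N_0$. Fix $\varepsilon_0>0$ so small that the compact arc $\{\phi_H^t(p):t\in[0,\varepsilon_0]\}$ stays in $W$, and a neighborhood $W'$ of it with $\overline{W'}\subset W$. Choosing smooth $H_k$ as in Definition \ref{def:hameo} ($H_k\to H$ and $\phi^t_{H_k}\to\phi^t_H$ uniformly) and a cutoff $\chi$ supported in $W$ with $\chi\equiv 1$ on $W'$, I set $G_k=(\chi H_k)\circ\Phi^{-1}$, extended by $0$; these are smooth, compactly supported Hamiltonians on $T^*N$. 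Two properties survive the cutoff: because $H|_L=0$, the restrictions $G_k|_{N_0}$ converge to $0$ uniformly, so $\delta_k:=\max_t\osc(G_{k,t}|_{N_0})\to 0$; and on $W'$ the flow $\phi^t_{G_k}$ is conjugate to $\phi^t_{H_k}$ as long as trajectories remain in $W'$, so $\phi^t_{G_k}$ converges uniformly on $\Phi(W')$ to $\Phi\,\phi^t_H\,\Phi^{-1}$ for $t\le\varepsilon_0$. I stress that I do \emph{not} need this cutoff to define an element of $C^0_\Ham(T^*N)$: only the sequence $G_k$ and the two properties above enter.

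\emph{Contradiction.} Suppose the conclusion fails, so $q:=\phi_H^{t_0}(p)\notin L$ for some $t_0\in(0,\varepsilon_0]$. Then $w:=\Phi(q)\notin N_0$, at some distance $2\rho>0$ from $N_0$. Since the limit homeomorphism $\Phi\,\phi_H^{t_0}\,\Phi^{-1}$ is continuous and sends $x_0$ to $w$, there is a \emph{fixed} open neighborhood $U$ of $x_0$ (so $U\cap N_0\ne\emptyset$) whose image lies in the metric ball $B(w,\rho/2)$; by uniform convergence of the flows, $\phi_{G_k}^{t_0}(U)\subset B(w,\rho)$ for all large $k$, and hence $N_0\cap\phi_{G_k}^{t_0}(U)=\emptyset$. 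Now the reformulation of the Lisi--Rieser inequality (Remark \ref{LiRi_reform}, applied with $N,N_0$ to the Hamiltonian diffeomorphism $\phi_{G_k}^{t_0}$) gives, for large $k$,
\[
\gamma\bigl(\phi_{G_k}^{t_0}(N_0),N_0\bigr)\ \ge\ \chzrel(U;N_0)=:\kappa>0,
\]
a \emph{$k$--independent} positive constant since $U$ is fixed. On the other hand, inequality \eqref{eq:gamma-bdd-osc-on-L} yields
\[
\gamma\bigl(\phi_{G_k}^{t_0}(N_0),N_0\bigr)\ \le\ t_0\max_{t\in[0,t_0]}\osc(G_{k,t}|_{N_0})\ \le\ t_0\,\delta_k\ \longrightarrow\ 0
\]
as $k\to\infty$. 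These contradict each other, so $\phi_H^{t_0}(p)\in L$ for every $t_0\in(0,\varepsilon_0]$, which proves the proposition with $\varepsilon=\varepsilon_0$.

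\emph{Main obstacle.} The delicate point is producing a displaced set $U$ whose relative capacity $\chzrel(U;N_0)$ stays bounded away from $0$ while $\osc(G_k|_{N_0})\to 0$; this is exactly what makes it essential to use the uniform $C^0$--convergence of the \emph{flows}, and not merely of the Hamiltonians. Indeed, $\gamma$-smallness (equivalently, $C^0$-smallness of $G_k|_{N_0}$) does not by itself prevent $\phi_{G_k}^{t_0}(N_0)$ from protruding far from $N_0$ --- rapidly oscillating generating functions of the form $G_k|_{N_0}\sim \varepsilon_k\sin(\cdot/\varepsilon_k)$ have vanishing oscillation yet push $N_0$ a definite distance away --- so one cannot argue by controlling the image Lagrangian directly. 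The uniform convergence of the flows is precisely what rules out such behavior and lets one fix $U$ independently of $k$.
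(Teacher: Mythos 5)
Your proof is correct and follows essentially the same route as the paper: reduce to a closed model Lagrangian, cut off the approximating Hamiltonians, and squeeze $\gamma$ between the Lisi--Rieser lower bound for a displaced neighborhood and the oscillation upper bound \eqref{eq:gamma-bdd-osc-on-L}. The only point to make explicit is the reduction step: since $L$ is not closed, the usual Weinstein theorem does not apply directly, and one must first complete a small piece of $L$ to a closed Lagrangian torus inside a Darboux chart (arranging also that $\Phi(W)\cap N_0=\Phi(L\cap W)$, which your argument uses twice) --- this is exactly what the paper's Lemma \ref{lemm: local_structure} provides.
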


Our proof of the above proposition will use the following simple lemma on the local structure of Lagrangians.  
\begin{lemma} \label{lemm: local_structure} 
 Let $L \subset M$ denote a Lagrangian.  Around each point $p \in L$ there exists a neighborhood $L_p \subset L$ such that $L_p$ is contained in a closed Lagrangian torus $T \subset M$.
\end{lemma}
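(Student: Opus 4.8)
The plan is to reduce the statement to a model situation in $\R^{2n}$ and then to exhibit the torus explicitly as a product of planar loops. First I would apply Weinstein's Lagrangian neighborhood theorem to obtain, around $p$, a symplectic chart $\Phi\co \m O\to \R^{2n}$ with $\Phi(p)=0$ in which $\omega$ becomes the standard form $\sum_{i=1}^n dq_i\wedge dp_i$ and $L\cap\m O$ becomes the piece $\{p=0\}\cap\Phi(\m O)$ of the zero section. After this normalization it suffices to produce, inside any prescribed ball $B(0,\rho)\subset\R^{2n}$ contained in $\Phi(\m O)$, a closed Lagrangian torus containing a neighborhood of the origin in $\{p=0\}$; pulling back by $\Phi^{-1}$ then yields the desired torus $T\subset M$ and the neighborhood $L_p\subset L$.

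For the model construction I would work one symplectic $2$--plane at a time. In the $(q_i,p_i)$--plane I choose a smooth embedded closed curve $\gamma_i$ (for instance a smoothed rectangle) which contains the horizontal segment $S_i=\{(t,0)\co |t|\leq\delta\}$ and is contained in the disk of radius $\rho/\sqrt{n}$ about the origin, where $\delta>0$ is a small parameter to be fixed. I set $T=\gamma_1\times\cdots\times\gamma_n$. This is a smoothly embedded $n$--torus, and it is Lagrangian: for tangent vectors $v=\sum_i v_i$ and $w=\sum_j w_j$ with $v_i,w_i$ tangent to $\gamma_i$, the standard form gives $\omega(v,w)=\sum_i (dq_i\wedge dp_i)(v_i,w_i)$, since distinct planes are symplectically orthogonal; and each summand vanishes because $v_i$ and $w_i$ are tangent to the same curve $\gamma_i$. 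Moreover $T$ contains the cube $S_1\times\cdots\times S_n=[-\delta,\delta]^n\times\{0\}$, whose interior is an open neighborhood of the origin in the zero section, hence corresponds to a neighborhood $L_p$ of $p$ in $L$.

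Finally, choosing $\delta$ small enough that each $\gamma_i$ fits inside the disk of radius $\rho/\sqrt{n}$ guarantees $T\subset B(0,\rho)\subset\Phi(\m O)$, so that $\Phi^{-1}(T)$ is a genuine closed Lagrangian torus in $M$ containing $L_p$. I do not expect any serious obstacle. The only points requiring care are (i) arranging the chart so that $L$ is exactly the flat zero section, which is precisely the content of the Weinstein neighborhood theorem, and (ii) checking that the product of loops is embedded and stays inside the chart, which is handled by shrinking $\delta$. The Lagrangian condition for the product is the short computation above and presents no difficulty.
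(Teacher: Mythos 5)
Your proposal is correct and follows essentially the same route as the paper: flatten $L$ locally to a piece of the zero section in a Darboux-type chart, then build the torus as a product of planar embedded loops each containing the relevant segment in its $(q_i,p_i)$--plane. The paper phrases the chart as a Darboux ball in which $L\cap U$ is the standard linear Lagrangian rather than invoking the Weinstein neighborhood theorem, but the construction and verification are the same.
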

\begin{proof}
Let $U$ denote a Darboux ball around $p$, equipped with the standard coordinates $(x_i, y_i),$ such that $L\cap U = \{(x_i, y_i)\,|\, -a < x_i < a, y_i = 0 \}.$  Let $L_p = \{(x_i, y_i)\,|\, -\frac{a}{2} \leq x_i\leq \frac{a}{2}, y_i = 0 \}.$  Then, for all $k$, the projection of $L_p$ onto the $\langle x_k, y_k \rangle$ plane is the segment $[-\frac{a}{2}, \frac{a}{2}] \times \{0\}$ which can be completed to a smooth embedded loop, say $T_k$, in the  $\langle x_k, y_k\rangle$ plane.  Here, we can make sense of the projection onto a coordinate plane by identifying $U$ with a standard ball in $\R^{2n}.$ %Furthermore, we can ensure that $T_k$ is contained in the projection of $U$ onto the $\langle x_k, y_k\rangle$ plane.  
We set $T= T_1 \times \cdots \times T_n$.  This is a Lagrangian torus inside $U$ containing $L_p$.
\end{proof}
\begin{proof}[Proof of Proposition \ref{prop:c0lag-constant->preserved}]
 By replacing $H$ by $H - c(t)$ we can suppose that $H|_{L}= 0$.  Apply Lemma \ref{lemm: local_structure} to obtain $L_p$ and $T$ as described in the lemma and note that by replacing $L$ with $L_p$  we may make the following simplifying assumption: \textit{there exists a Lagrangian torus $T$ in $M$ such that $L \subset T$.}

%\noindent \textit{ $L$ is a $C^0$--Lagrangian covered by a $C^0$--Lagrangian chart $(U, \theta)$ such that $\theta(L) \subset T \subset \theta(U)$ where $T$ is a (standard) Lagrangian torus in $\mathbb{R}^{2n}$. }

%We will now prove the proposition under this simplifying assumption.  Let $H_i \co [0,1] \times M \to \mathbb{R}$ denote a sequence of smooth Hamiltonians such that $H_i$ converges uniformly to $H$ and $\phi_{H_i}$ converges to $\phi_H$ in $C^0$--topology.  Take $W \Subset V$ to be  open subsets of $U$ such that 
%\begin{equation} \label{eq: containments}
%p \in W \cap L \quad \text{and}\quad \overline{V} \cap (\theta^{-1}(T) \setminus L) = \emptyset.
%\end{equation}
 %Recall that the symbol $\Subset$ denotes compact containment and $\overline{V}$ denotes the closure of $V$.  Denote by $\beta \co M \to \mathbb{R}$ a cutoff function such that $\beta$ is supported  in $U,$ $\beta|_{V} =1,$ and $\beta|_{\theta^{-1}(T) \setminus L} = 0$.  By shrinking $V$, if needed, we may assume that $\theta$ maps the support of $\beta$ into a Weinstein neighborhood of $T$.

We will now prove the proposition under this simplifying assumption.  Let $H_i \co [0,1] \times M \to \mathbb{R}$ denote a sequence of smooth Hamiltonians such that $H_i$ converges uniformly to $H$ and $\phi_{H_i}$ converges to $\phi_H$ in $C^0$--topology.  Take $W \Subset V$ to be  open subsets of $M$ such that 
\begin{equation} \label{eq: containments}
p \in W \cap L \quad \text{and}\quad \overline{V} \cap (T \setminus L) = \emptyset.
\end{equation}
 Recall that the symbol $\Subset$ denotes compact containment and $\overline{V}$ denotes the closure of $V$.  The second condition in \eqref{eq: containments} allows us to pick a cutoff function $\beta \co M \to \mathbb{R}$ such that $\beta|_{V} =1$ and $\beta|_{T \setminus L} = 0$.  By shrinking $V$, if needed, we may assume that $\beta$ is supported in a Weinstein neighborhood of $T$.

Let $G_i = \beta H_i $ and  $G = \beta H $.  Observe that $G_i$ converges uniformly to $G$ and $G|_{T} = 0$.  We pick $ \varepsilon > 0 $ such that
\begin{equation} \label{eq: epsilon}
\forall t \in [0, \varepsilon], \;\; \phi^t_H(W) \subset V.
\end{equation}
For $i$ large enough $\phi^t_{H_i}(W) \subset V$ for all $t \in [0, \varepsilon]$.  Since, $G_i|_{V} = H_i|_{V}$  we conclude that, for large $i,$ 
\begin{equation}\label{eq: flows coincide}
\forall (t,x) \in [0, \varepsilon] \times W, \;\; \phi^t_{G_i} =  \phi^t_{H_i}.
\end{equation}

For a contradiction, suppose that  $\phi^{t_0}_H(p)$ is not contained in $L$ for some $ t_0 \in [0, \varepsilon].$  
From (\ref{eq: containments}) and (\ref{eq: epsilon}) we conclude that $\phi^{t_0}_H(p) \notin T$.  Hence, we can find a small ball $B \subset W$ around $p$ which intersects $T$ non-trivially and such that $\overline{\phi^{t_0}_H(B)} \cap T = \emptyset$.  Hence, for $i$ large enough, we have $\phi^{t_0}_{H_i}(B) \cap T= \emptyset.$  From (\ref{eq: flows coincide}) we get that $\phi^{t_0}_{G_i}(B) \cap T = \emptyset.$

We picked $\beta$ such that the Hamiltonians $G_i$ all have support in a Weinstein neighborhood of $T$.  Therefore, we can pass to $T^*T$, apply Lemma \ref{lemm:LiRi-inequality} and its consequences as stated in Remarks \ref{LiRi_reform} and \ref{osc_LiRi} and conclude that $$t_0 \cdot \max_{t \in [0, t_0]}(\osc(G_i(t, \cdot)|_{T_0})) \geq \chzrel(B;T_0),$$ where $T_0$ stands for the 0--section in $T^*T$.  Here, we have used the fact that $\phi^{t_0}_{G_i}$ is the time--1 map of the flow of the Hamiltonian $t_0G_i(t_0t,x)$.   Since $G_i$ converges uniformly to $G$, the same inequality must hold for $G$ but this contradicts the fact that $G|_{T} = 0.$
\end{proof}

%%%%%%%%%%%%%%%%%%%%%%%%%%%%%%%%%%%%%%%%%%%%%%%%%%%%%%%%%%%%%%%%%%%%%%%%%%%%%%%%%%%%%%%%%%%%%%%%%%%%%%%%%%%%%%
\subsection{$C^0$--Hamiltonians preserving a Lagrangian are constant on it}\label{sec:preserv-implies-cst}
In this subsection, we show that if $H\in C^0_\Ham$ generates a hameotopy $\phi_H$ which (locally) preserves a Lagrangian $L$ then, the restriction of $H$ to $L$ is (locally) a function of time. More precisely,

\begin{prop}\label{prop:C0-preserved->constant}
Let $L \subset M$ denote a Lagrangian, $\m U$ an open subset of $L$, and $H\in C^0_\Ham$ with associated hameotopy $\phi_H$. Suppose that $\phi^t_H(\m U) \subset L$ for all $t \in [0,1]$ and let $\m V$ denote the interior of $\cap_{t \in [0,1]} \phi^t_H(\m U)$.  Then, $H(t, \cdot)|_{\m V}$ is a locally constant function for each $t \in [0,1]$. 
\end{prop}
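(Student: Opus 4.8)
The plan is to argue by contradiction, setting up a competition between the energy--capacity lower bound for the spectral distance (Corollary \ref{corol:nrg-capcity-small-osc}) and the upper bound forced by the preservation hypothesis (Lemma \ref{lemm:haus_cont_gamma}). First I would make two reductions. Since the conclusion is local on $\m V$ and concerns a fixed time, I fix $z_0\in\m V$ and aim to show that $H(t_0,\cdot)$ is constant near $z_0$ for each fixed $t_0$. Replacing $\phi^t_H$ by the time--shifted hameotopy $s\mapsto \phi^{t_0+s}_H(\phi^{t_0}_H)^{-1}$, which is generated by $H(t_0+s,\cdot)$ and which maps $\phi^{t_0}_H(\m U)\supset\m V$ into $L$, I may assume $t_0=0$. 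Assuming for contradiction that $H(0,\cdot)$ is not constant near $z_0$, I obtain points $z_\pm$ arbitrarily close to $z_0$, lying in $\m V$, with $H(0,z_+)-H(0,z_-)=2c>0$. Subtracting the constant $m=\tfrac12\big(H(0,z_+)+H(0,z_-)\big)$ and using continuity, I get disjoint balls $U_\pm\ni z_\pm$ meeting $L$ and a short window $[0,\delta]$ on which $H-m>c/2$ on $U_+$, $H-m<-c/2$ on $U_-$, with $\osc$ on each $U_\pm$ below a prescribed $\eps$.

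Next I would localize. Using Lemma \ref{lemm: local_structure} I embed a neighborhood of $z_0$ in $L$ into a closed Lagrangian torus $T$, pass to a Weinstein neighborhood identified with a neighborhood of the zero section $T_0\subset T^*T$, and cut off: with smooth $H_i\to H$ and $\phi_{H_i}\to\phi_H$, I set $G_i=\beta H_i$, where $\beta=1$ near $z_0$ and $\beta=0$ outside the Weinstein neighborhood and on $\pi^{-1}(\m V')$ for a proper open $\m V'\subset T$ lying in the ``completion'' part of the torus, away from the genuine Lagrangian piece. Then $G_i$ is compactly supported, converges uniformly, and vanishes on $\pi^{-1}(\m V')$, which is exactly the hypothesis needed for Lemma \ref{lemm:haus_cont_gamma}. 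For the lower bound I apply Corollary \ref{corol:nrg-capcity-small-osc} to the reparametrization $K_i(s,\cdot)=\delta\big(G_i(\delta s,\cdot)-m\big)$ of $G_i-m$ over $[0,\delta]$: choosing $U_\pm$ small enough that $\chzrel(U_\pm;T_0)<\delta c/2$ and $\eps$ small, this gives
$$\gamma\big(\phi^\delta_{G_i}(T_0),T_0\big)\;\geq\;\min\{\chzrel(U_-;T_0),\chzrel(U_+;T_0)\}-2\delta\eps\;=:\;\kappa>0.$$

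Finally I would extract the upper bound from preservation. The hypothesis $\phi^t_H(\m U)\subset L$ forces the orbits of the genuine Lagrangian piece of $T_0$ to stay exactly on $T_0$, so that $\phi^t_{G_i}(T_0)$ should be confined to a thin tube $T^*_rT$ for $t\in[0,\delta]$ with $r\to 0$ as first $\delta\to 0$ and then $i\to\infty$. Lemma \ref{lemm:haus_cont_gamma}, applicable since $G_i$ vanishes on $\pi^{-1}(\m V')$, then yields $\gamma\big(\phi^\delta_{G_i}(T_0),T_0\big)\leq Cr$ with $C=C(\m V')$; arranging the parameters so that $Cr<\kappa$ produces the contradiction and proves local constancy.

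The main obstacle is precisely this last step: reconciling the cut-off with preservation. Preservation controls only the part of $T_0$ over $\m U$, whereas over the transition region $\{0<\beta<1\}$ the cut-off Hamiltonian $G_i=\beta H_i$ no longer generates the flow of $H_i$, so a priori $\phi^t_{G_i}(T_0)$ may bulge off $T_0$ there, and this bulge governs $r$. The tension is that the window must be long enough ($\delta\gtrsim \chzrel(U_\pm;T_0)/c$) for the small--capacity height to register in the lower bound, yet short enough that the uncontrolled motion over the transition keeps $r$ below $\kappa/C$; since the flows converge only in $C^0$, the velocity over the transition is not controlled, so closing this gap requires exploiting the geometry of the torus completion together with the function--of--time normalization to confine the transition to a region where the flow genuinely stays near $T_0$. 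This balancing of the two competing scalings is where the real work lies.
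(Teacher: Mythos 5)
Your overall strategy (lower bound from the energy--capacity inequality, upper bound from Lemma \ref{lemm:haus_cont_gamma} via preservation, contradiction through something like Proposition \ref{coro:spectral-conv-implies-constant}) is the right one, and you have correctly located the critical difficulty --- but you have not resolved it, and as written the argument does not close. The problem is exactly the one you name in your last paragraph: the cut-off Hamiltonian $G_i=\beta H_i$ does not generate the flow of $H_i$ over the transition region $\{0<\beta<1\}$, so the preservation hypothesis gives you no control whatsoever on $\phi^t_{G_i}(T_0)$ there. Consequently you cannot verify the hypothesis $\phi^t_{G_i}(T_0)\subset T^*_rT$ of Lemma \ref{lemm:haus_cont_gamma} for any $r$ that tends to $0$, and the hoped-for balancing of $\delta$ against $r$ has no basis: the bulge over the transition region is of order $1$, not of order $\delta$ or $o(1)$ in $i$. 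No choice of scaling fixes this, because the issue is not quantitative but structural --- a naive cutoff of $H$ simply produces a Hamiltonian whose flow has no reason to preserve the closed torus.

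The paper's resolution is a commutator trick that you are missing. Instead of cutting off $H$, pick $q$ near $p$ with $H(0,p)\neq H(0,q)$ and a symplectomorphism $\psi$ supported in a small ball $B$, preserving $T$, with $\psi(p)=q$, and form
$$G=(H\circ\psi-H)\circ\phi_H,$$
whose flow is the commutator $(\phi_H^t)^{-1}\psi^{-1}\phi_H^t\psi$. This $G$ is automatically supported in $\bigcup_t(\phi_H^t)^{-1}(B)\Subset U$ (after a time reparametrization ensuring $\phi_H^{\pm t}(B)\Subset U$), so no cutoff function is ever applied to $H$; and one checks, using only the local preservation hypothesis together with the fact that $\psi$ preserves $T$, that the commutator flow preserves $T$ \emph{globally}. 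Hence $\phi^t_{G_i}(T_0)\subset T^*_rT$ for every fixed $r$ once $i$ is large, Lemma \ref{lemm:haus_cont_gamma} gives $\gamma(\phi^t_{G_i}(T_0),T_0)\to0$, and Proposition \ref{coro:spectral-conv-implies-constant} forces $G|_T$ to be a function of time --- contradicting $G(0,p)=H(0,q)-H(0,p)\neq0$ while $G$ vanishes near $\partial U$. The commutator construction is precisely the device that reconciles compact support, genuine preservation of the closed torus, and non-constancy on it; without it (or an equivalent idea) the two bounds you set up cannot be made to collide.
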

We were recently informed by Y.-G. Oh that it is possible to extract the above proposition from \cite[Theorem 4.9]{Ohlocality}; the techniques of \cite{Ohlocality} are different than ours.

Our proof of the proposition uses the following consequence of Corollary \ref{corol:nrg-capcity-small-osc}.
This result can be viewed as a Lagrangian analog of the uniqueness of generators Theorem \cite[Theorem 2]{HLS12}.  The argument presented here is similar to the proof of the mentioned uniqueness theorem.

\begin{prop}\label{coro:spectral-conv-implies-constant}
  Let $L$ be a smooth closed manifold and $\{H_k\}_k$ a sequence of smooth, uniformly compactly supported  Hamiltonian functions on $T^*L$ (that is, there exists a compact $K$ such that $\cup_k \mathrm{supp}(H_k) \subset K$), so that
  \begin{enumerate}
  \item for all $t\in [0,1]$, $\gamma (\phi^t_{H_k}(L_0),L_0)$ converges to $0$, and
  \item $\{H_k\}_k$ uniformly converges to a continuous function $H$.
  \end{enumerate}
Then, $H$ restricted to $L_0$ is a function of time.
\end{prop}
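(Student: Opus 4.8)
The plan is to argue by contradiction: assuming that $H|_{L_0}$ is \emph{not} a function of time, I will produce from Corollary \ref{corol:nrg-capcity-small-osc} a uniform positive lower bound for $\gamma(\phi^1_{K_k}(L_0),L_0)$ for a cleverly reparametrized family $K_k$, contradicting hypothesis (1). If $H|_{L_0}$ is not a function of time there exist $t_0$ and two points $x_+,x_-\in L$ with $H(t_0,x_+)-H(t_0,x_-)=3\delta>0$; by continuity we may take $t_0\in(0,1)$. Since subtracting a function of time alters neither a Hamiltonian's flow nor the property of being a function of time on $L_0$, I first replace $H$ by $H'(t,x)=H(t,x)-c(t)$ and each $H_k$ by $H_k'(t,x)=H_k(t,x)-c(t)$, where $c(t)=\tfrac12\big(H(t,x_+)+H(t,x_-)\big)$. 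This preserves the uniform convergence $H_k'\to H'$ and all the flows, while $H'(t_0,x_\pm)=\pm\tfrac32\delta$, so $H'$ is positive near $x_+$ and negative near $x_-$. (A standard cutoff far from the $U_\pm$ below restores compact support without affecting the local data or the image of $L_0$.)

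Next I localize, using the observation that the capacity of a spatial region is fixed whereas a Hamiltonian's oscillation can be made negligible over a short time. By uniform continuity of $H'$ I fix $\tau_0>0$ and then small disjoint balls $U_\pm\ni x_\pm$ centered on $L_0$, of equal relative capacity $m:=\chzrel(U_\pm;L_0)=\tfrac{\pi r^2}{2}$, such that for all $t\in[t_0-\tau_0,t_0+\tau_0]$ one has $H'(t,\cdot)|_{U_+}>\delta$, $H'(t,\cdot)|_{U_-}<-\delta$, and the spatial oscillation satisfies $\osc(H'(t,\cdot)|_{U_\pm})<\sigma$ with $\sigma<\delta/4$; shrinking $r$ I also arrange $m<\tau_0\delta$. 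Now set $a=t_0-\tau,\ b=t_0+\tau$ with $\tau:=m/\delta\le\tau_0$, and reparametrize the interval $[a,b]$ to $[0,1]$ via $K_k(t,x)=(b-a)\,H_k'\big(a+(b-a)t,x\big)$, whose flow satisfies $\phi^1_{K_k}=\phi^b_{H_k}(\phi^a_{H_k})^{-1}$. For large $k$ (using $H_k'\to H'$ uniformly) we then have $K_k|_{U_+}>2\tau\delta=2m>\chzrel(U_+;L_0)$, likewise $K_k|_{U_-}<-2m<-\chzrel(U_-;L_0)$, and $\osc(K_k(t,\cdot)|_{U_\pm})<2\tau\sigma$.

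Applying Corollary \ref{corol:nrg-capcity-small-osc} to $K_k$ yields
\begin{align*}
\gamma(\phi^1_{K_k}(L_0),L_0)\ \geq\ m-2(2\tau\sigma)\ =\ m\Big(1-\tfrac{4\sigma}{\delta}\Big)\ >\ 0,
\end{align*}
a strictly positive constant independent of $k$, since $\sigma<\delta/4$. On the other hand, subadditivity of $\gamma$ together with $\gamma(\psi)=\gamma(\psi^{-1})=\gamma(\psi(L_0),L_0)$ and hypothesis (1) give
\begin{align*}
\gamma(\phi^1_{K_k}(L_0),L_0)=\gamma\big(\phi^b_{H_k}(\phi^a_{H_k})^{-1}\big)\leq \gamma(\phi^b_{H_k}(L_0),L_0)+\gamma(\phi^a_{H_k}(L_0),L_0)\xrightarrow[k\to\infty]{}0,
\end{align*}
a contradiction. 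Hence $H|_{L_0}$ is a function of time.

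I expect the main obstacle to be reconciling the energy-capacity inequality with the fact that the limit $H$ is merely continuous: the oscillation term $2\varepsilon$ in Corollary \ref{corol:nrg-capcity-small-osc} need not be dominated by the capacity of a small ball when the function is only continuous (a non-Lipschitz $H$ can oscillate far more than $r^2$ at scale $r$). The device that resolves this is precisely the short-time reparametrization above: passing to an interval of length $2\tau$ scales both the values and the oscillation of the Hamiltonian by $2\tau$ while leaving the relative capacity $m$ of the \emph{fixed} spatial regions $U_\pm$ untouched, and the choice $\tau=m/\delta$ makes the capacity win. A secondary, purely technical point is the bookkeeping needed to keep everything compactly supported after subtracting the function of time $c(t)$, which is handled by a cutoff supported away from $U_\pm$ that leaves the flow of $L_0$ unchanged.
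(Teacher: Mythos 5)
Your proposal is correct and follows essentially the same route as the paper's proof: normalize $H$ by subtracting a function of time, localize at two points where the values differ using small balls on $L_0$ with controlled oscillation, reparametrize over a short time interval so that the fixed relative capacity $\chzrel(U_\pm;L_0)$ dominates the (time-rescaled) oscillation, apply Corollary \ref{corol:nrg-capcity-small-osc} for a uniform positive lower bound on $\gamma(\phi^1_{K_k}(L_0),L_0)$, and contradict hypothesis (1) via subadditivity of $\gamma$. The only differences are cosmetic bookkeeping (a symmetric interval $[t_0-\tau,t_0+\tau]$ and the choice $\tau=m/\delta$, versus the paper's tuning of $r$ and $\delta$ so that $\chzrel(B_\pm;L_0)=\delta(\Delta-\eps)$), and the stated bound $K_k|_{U_+}>2m$ should really be $K_k|_{U_+}>m$ for large $k$, which is all that is needed.
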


{The proofs of this section repeatedly use the following simple fact: Let $H$ denote a Hamiltonian, $C^0$ or smooth. The time--$t$ flow of $\tilde{H}(t,x) = aH(t_0 +at,x)$ is given by the expression: $\phi_{\tilde{H}}^t=\phi^{t_0 + at}_H  (\phi_H^{t_0})^{-1}$.}

\begin{proof}[Proof of Proposition \ref{coro:spectral-conv-implies-constant}]
If $H|_{[0,1]\times L_0}$ is not a function of time, there exist $t_0\in [0,1)$ and $x_+$, $x_-\in L_0$ such that $H_{t_0}(x_+)>H_{t_0}(x_-)$. Up to a shift (and cutoff far from $K\cup L_0$), we can assume that $H_{t_0}(x_+) = - H_{t_0}(x_-) = \Delta > 0$.

Now, let $\eps=\Delta/4$ and notice that there exist $\delta\in (0,1]$ and $r>0$ such that $[t_0,t_0+\delta]\subset [0,1]$ and that there exist symplectically embedded balls, centered at $x_\pm$, $B_\pm=\iota_\pm(B_{\bb C^n}(0,r))$, with real part mapped to $L_0$, which are disjoint and such that 
$$\osc_{[t_0,t_0+\delta]\times B_\pm} H<\eps.$$
By shrinking either $\delta$ or $r$, we can assume that the Lisi-Rieser capacity of the balls with respect to $L_0$ satisfy 
$$\chzrel(B_{\pm};L_0)=\delta(\Delta-\eps).$$
Then set $F_k(t,x)=\delta H_k (t_0+\delta t,x)$ and define $F$ accordingly. By construction, the function $F$ satisfies
\[  \forall (t,x)\in [0,1]\times B_+,\ F(t,x) > \chzrel(B_{\pm};L_0),\]
\[  \forall (t,x)\in [0,1]\times B_-,\ F(t,x) <-\chzrel(B_\pm;L_0)\]
and $$ \osc_{[0,1]\times B_\pm}(F)<\delta\eps.$$
Since the Hamiltonians $F_k$ converge uniformly to $F$, they satisfy the same inequalities for $k$ large enough.
Thus, by Corollary \ref{corol:nrg-capcity-small-osc}, $$\gamma(\phi_{F_k}^1(L_0),{L_0}) \geq \chzrel(B_\pm;L_0)-2\delta\eps=\delta(\Delta-3\eps)=\tfrac14\delta\Delta>0.$$ 
Hence,  $\gamma(\phi_{F_k}^1(L_0),{L_0})$ is uniformly bounded away from 0.
 However, $\phi_{F_k}^1=\phi_{H_k}^{t_0+\delta} (\phi_{H_k}^{t_0})^{-1}$ so that, by Properties \textit{(\ref{si:triang-ineq})} and \textit{(\ref{si:duality})} of spectral invariants,
\begin{align*}
  \gamma(\phi_{F_k}^1(L_0),{L_0})=\gamma(\phi_{F_k}^1) &\leq \gamma(\phi_{H_k}^{t_0+\delta})+\gamma((\phi_{H_k}^{t_0})^{-1}) = \gamma(\phi_{H_k}^{t_0+\delta})+\gamma(\phi_{H_k}^{t_0}) \\
&\leq \gamma(\phi_{H_k}^{t_0+\delta}(L_0),{L_0})+\gamma(\phi_{H_k}^{t_0}(L_0),{L_0})
\end{align*}
which goes to 0 when $k$ goes to infinity because of Assumption \textit{(1)} and we get a contradiction.
\end{proof}

\begin{proof}[Proof of Proposition \ref{prop:C0-preserved->constant}]
Assume, for a contradiction, that the conclusion of the proposition fails to hold.  We can therefore find $p \in \m V$ and $t_0 \in [0,1)$ such that $H(t_0, \cdot)$ is not constant on any neighborhood of $p$ in $\m V$. First, note that, up to time reparametrization, we may assume that $t_0 = 0$.  Indeed, replace $H$ with $\tilde{H}(t,x) = aH(t_0 + at,x)$, where $a = 1- t_0,$ and $\m U$ with $\tilde{\m U} = \phi^{t_0}_H(\m U)$.  Then, $\m V$ is contained in the interior of $\cap_{t \in [0,1]} \phi^t_{\tilde{H}}(\tilde{\m U} )$ and  $\tilde{H}(0, \cdot)$ is not constant on any neighborhood of $p$.

Apply Lemma \ref{lemm: local_structure} to obtain $L_p$ and $T$ as described in the lemma.  By shrinking $L_p$, if needed, we may assume that $L_p \Subset \m V.$   Let $U \subset M$ denote a small open set around $p$ which is contained in a Weinstein neighborhood of $T$ and such that $L_p \cap U  = L \cap U = T \cap U \Subset L_p.$  Furthermore, towards the end of this proof we will need to apply Lemma \ref{lemm:haus_cont_gamma}, and so we pick $U$ such that the projection of $U$ to $T$ along the cotangent fibers in the Weinstein neighborhood is a proper subset of $T$ whose complement contains a ball.

% Note that $\phi^t_H(\m V), (\phi^t_H)^{-1} (\m V) \subset \m U$ for all $t \in [0,1]$. 
Since $L_p \cap U \Subset L_p \Subset \m V$, it follows that there exists a  small $\varepsilon > 0$ such that $\phi^t_H( L_p \cap U), (\phi^t_H)^{-1} (L_p \cap U) \Subset L_p$ for all $t \in [0,\varepsilon].$  Replacing $H$ with $\tilde{H}(t,x) = \eps H(\eps t,x)$,  we may assume that $$\phi^t_H( L_p \cap U),  (\phi^t_H)^{-1} (L_p \cap U) \Subset L_p \text{ for all }t \in [0,1].$$

Next, let $B$ denote an open neighborhood of $p$ which is compactly contained in $U$. Once again, as in the previous paragraph, by a reparametrization in time, where $H$ is replaced with $\tilde{H}(t,x) = \eps H(\eps t,x)$ for a sufficiently small $\eps$,  we may assume that $\phi^t_H(B)$, $(\phi^t_H)^{-1} (B) \Subset U$ for all $t \in [0,1].$

 Pick $q \in B$ such that $H(0, p) \neq H(0,q)$ and take a symplectomorphism $\psi$ supported in $B$ such that $\psi$ preserves $T$ and $\psi(p) = q$. Consider the continuous Hamiltonian $G = (H \circ \psi - H)\circ \phi_H$.  It is supported in $\cup_{t \in [0,1]} (\phi_H^t)^{-1}(B) \Subset U$, and moreover, the flow of $G$ is $(\phi_H^t)^{-1}\psi^{-1}\phi_H^t\psi$.  We will now prove that this flow preserves $T$ globally.  Note that the flow is supported in $U$ and pick $x \in T \cap U \subset L_p$.  Since $\phi^t_H(T\cap U) \subset T$, and $\psi(x) \in T\cap U$, we see that  $\phi^t_H\psi(x) \in T$. First, suppose that $\phi^t_H\psi(x) \notin B$.  Then, $\phi^t_H \psi(x)$ is outside the support of $\psi^{-1}$ and so $(\phi_H^t)^{-1}\psi^{-1}\phi_H^t\psi(x) =  \psi(x)$ which is in $T$.   Next, suppose that $\phi^t_H\psi(x) \in B \cap T$. Then, $\psi^{-1} \phi^t_H\psi(x) \in B\cap T$, and so it suffices to check that $(\phi_H^t)^{-1}(B \cap T) \subset T$: this is because $B\cap T \subset U \cap T \subset U \cap L_p$ and   $(\phi^t_H)^{-1} (L_p \cap U) \Subset L_p$ for all $t \in [0,1]$.  We have proven that the flow of $G$ preserves $T$ globally.

Note that $G|_{T \cap U}$ is not a function of time only: $G(0,p) = H(0,q) - H(0,p) \neq 0$ and $G=0$ near the boundary of $U$.  Hence, we have obtained a $C^0$--Hamiltonian $G$, supported in $U$, whose flow $\phi^t_G$ preserves $T$ globally, but $G(0,\cdot)$ is not constant on $T$.  Because $G \in C^0_{\Ham}$ there exist smooth Hamiltonians $G_i$  such that $\{G_i\}$ converges uniformly to $G$ and $\{\phi_{G_i}\}$ converges to $\phi_G$.  Furthermore, we can ensure that all $G_i$'s are supported in $U$.  This can be achieved by picking a corresponding sequence of smooth Hamiltonians $H_i$ for $H$ and defining $G_i = (H_i \circ \psi - H_i)\circ \phi_{H_i}$.  For large $i$, $G_i$ is supported in $U$. 

Since $U$  is contained in a Weinstein neighborhood of $T$, we can pass to $T^*T$ and work with the Lagrangian spectral invariants of the 0--section $T_0$ associated to the Hamiltonians $G_i$.   Recall that in the second paragraph of the proof we picked the set $U$ so that Lemma \ref{lemm:haus_cont_gamma} could be applied.   For any fixed $ r >0$,  because $\phi^t_G$ preserves $T_0,$  we have $\phi^t_{G_i}(T_0) \subset T^*_rT$ for sufficiently large $i$.   The Hamiltonians $G_i$ are all supported in $U$ and hence using Lemma \ref{lemm:haus_cont_gamma} we conclude that $\gamma(\phi^1_{G_i}(T_0), T_0) \leq C r $, i.e. $\gamma(\phi^1_{G_i}(T_0), T_0) \to 0$.    Of course, by the same reasoning we obtain that $\gamma(\phi^t_{G_i}(T_0), T_0) \to 0$ for all $t \in [0,1]$. Then, Proposition \ref{coro:spectral-conv-implies-constant} implies that $G|_T = c(t)$, which contradicts the fact that $G|_T$ is not a function of time only.
\end{proof}

\section{$C^0$--rigidity of coisotropic submanifolds and their characteristic foliations}
\label{sec:coisotr-subm-foliat}

This section is devoted to the proofs of Theorems \ref{theo:smooth-C0-coiso-is-coiso}, \ref{theo:coiso-unique} and \ref{theo:emman_question}. We begin by proving Theorem \ref{theo:coiso-unique} and then deduce Theorems \ref{theo:smooth-C0-coiso-is-coiso} and  \ref{theo:emman_question} from it.

Before going into the proof, recall (see \cite[Proposition 13.7]{Libermann-Marle87} and \cite{gotay82}) that coisotropic submanifolds admit \emph{coisotropic charts}, that is, for every point $p\in C$, there is a pair  $(\theta,U)$ where $U$ is an open neighborhood of $p$ and $\theta \co U\to V\subset \R^{2n}$ is a symplectic diffeomorphism which maps $p$ to 0 and $C$ to the standard coisotropic linear subspace \begin{align*}
    \m C_0 = \{ (x_1,\ldots, x_n, y_1,\ldots, y_{n}) \,|\, (y_{n-k+1},\ldots, y_{n}) =(0, \ldots,0)  \} .
  \end{align*}
Such a diffeomorphism sends the characteristic foliation of $C$ to that of $\m C_0$, whose leaf through a point 
$q=(a_1,\ldots, a_n, b_1,\ldots, b_{n-k}, 0,\ldots, 0)\in \m C_0$ is the affine subspace 
\begin{align*}
\mathcal{F}_0(q)=\{(a_1,\ldots, a_{n-k},x_{n-k+1},\ldots,x_n, b_1,\ldots, &b_{n-k}, 0,\ldots, 0) \\
&\,|\,(x_{n-k+1},\ldots,x_{n})\in\R^{k}\}  \,.
\end{align*}

The first step of the proof, is establishing the next lemma which is a version of the first implication of Theorem \ref{theo:coiso-unique} that does not require the coisotropic submanifold to be a closed subset but holds only for small times.

\begin{lemma}\label{lemm:foliat-loc-preserved}
  Let $(M,\omega)$ be a symplectic manifold and $C$ a coisotropic submanifold of $M$. Let $H\in C^0_\Ham(M,\omega)$ with induced hameotopy $\phi_H$. Assume that the restriction of $H$ to $C$ only depends on time. Then, for every $p\in C$, there exists $\eps>0$ such that for all $t\in[0,\eps]$, $\phi_H^t(p)$ belongs to $\mathcal{F}(p)$, the characteristic leaf of $C$ through $p$. 
\end{lemma}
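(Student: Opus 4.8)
The plan is to reduce the lemma to its Lagrangian counterpart, Proposition \ref{prop:c0lag-constant->preserved}, following the ``everything is a Lagrangian'' philosophy: although $C$ itself need not be Lagrangian, I will realize the characteristic leaf $\m F(p)$ as the intersection of two Lagrangian submanifolds that are contained in $C$ near $p$. Since $H$ restricts to a function of time on $C$, it does so on each of these Lagrangians, and Proposition \ref{prop:c0lag-constant->preserved} forces the hameotopy to preserve each of them for small time; the orbit of $p$ is then trapped in their common intersection, which is exactly the leaf.

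To carry this out, I first fix $p \in C$ and invoke the existence of a coisotropic chart $(\theta, U)$ around $p$, so that $\theta \co U \to V \subset \R^{2n}$ is a symplectomorphism with $\theta(p) = 0$, $\theta(C \cap U) = \m C_0 \cap V$, and $\theta$ carries the characteristic foliation of $C$ to that of $\m C_0$. In these coordinates the leaf through the origin is $\m F_0(0) = \{(0,\ldots,0,x_{n-k+1},\ldots,x_n,0,\ldots,0)\}$. I then consider the two linear subspaces
$$ \Lambda_1 = \{ y = 0 \}, \qquad \Lambda_2 = \{ x_1 = \cdots = x_{n-k} = 0, \ y_{n-k+1} = \cdots = y_n = 0 \}. $$
A direct verification shows that each $\Lambda_i$ is Lagrangian, is contained in $\m C_0$, and contains $\m F_0(0)$, while their intersection satisfies $\Lambda_1 \cap \Lambda_2 = \m F_0(0)$. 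Pulling back by $\theta^{-1}$ yields two (non-closed) Lagrangian submanifolds $L_1, L_2$ of $M$ through $p$, both contained in $C \cap U$, whose intersection is precisely the portion of $\m F(p)$ lying in $U$.

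Because $L_i \subset C$ and $H|_C$ is a function of time, $H|_{L_i}$ is a function of time as well. Proposition \ref{prop:c0lag-constant->preserved} then produces $\eps_i > 0$ with $\phi_H^t(p) \in L_i$ for all $t \in [0,\eps_i]$. Setting $\eps = \min(\eps_1,\eps_2)$, and shrinking it if necessary so that $\phi_H^t(p)$ stays inside $U$ for $t \in [0,\eps]$ (possible by continuity of the flow), I conclude that $\phi_H^t(p) \in L_1 \cap L_2 \subset \m F(p)$ for all $t \in [0,\eps]$, which is the assertion of the lemma.

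I expect the only delicate point to be the choice of the two Lagrangians together with the verification that their intersection is exactly the leaf and nothing larger. Working in the flat coisotropic chart turns this into a transparent linear-algebra computation, so essentially no analytic difficulty remains beyond the appeal to Proposition \ref{prop:c0lag-constant->preserved}; one must only be careful to keep every conclusion local in time, in accordance with the ``small $\eps$'' nature of the statement.
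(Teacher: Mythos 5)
Your proof is correct and follows essentially the same route as the paper: pass to a coisotropic chart, realize the characteristic leaf through $p$ as an intersection of Lagrangians contained in $C$ near $p$, and apply Proposition \ref{prop:c0lag-constant->preserved} to each of them. The only difference is the choice of linear Lagrangians in the chart --- the paper intersects the $n-k$ subspaces $\Lambda_i=\{x_i=0 \text{ and } y_j=0 \ \forall j\neq i\}$, $i=1,\dots,n-k$, whereas you use just two --- and your pair is perfectly valid (and if anything treats the extreme codimensions more uniformly).
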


Before going into the details of the proof of Lemma \ref{lemm:foliat-loc-preserved}, we make the following observation. The lemma holds for coisotropic submanifolds of arbitrary codimension but its proof will follow from the particular case of Lagrangians. As mentioned in the introduction, this is not surprising in view of Weinstein's creed: ``Everything is a Lagrangian submanifold!'' \cite{weinstein}. 

\begin{proof} Let $p\in C$ and let $(U,\theta)$ be a coisotropic chart as defined above. 
For $i\in\{1,\ldots,n-k\}$ consider the Lagrangian linear subspaces 
$$\Lambda_i=\{(x_1,\ldots,x_n,y_1,\ldots,y_n)\,|\,x_i=0\text{ and }\forall j\neq i, y_j=0\},$$ and their pull backs $L_i=\theta^{-1}(\Lambda_i)$. Clearly, for all $i\in\{1,\ldots, n-k\}, L_i\subset  C\cap U$ and 
$$\m F(p)\cap U=\bigcap_{i=1}^{n-k} L_i.$$
Let $H$ be as in the statement of Lemma \ref{lemm:foliat-loc-preserved}. Then for any $i$, the restriction of $H$ to $L_i$ is a function of time since $L_i$ is included in $C$. Thus by Proposition \ref{prop:c0lag-constant->preserved} there exists $\eps_i>0$ such that for all $t\in[0,\eps_i]$, $\phi_H^t(p)\in L_i$. Taking $\eps=\min\{\eps_1,\ldots,\eps_{n-k}\}$, we get 
$$\forall t\in[0,\eps], \; \phi_H^t(p)\in \bigcap_{i=1}^{n-k} L_i\subset \m F(p).$$ 
\end{proof}

We can now prove Theorem \ref{theo:coiso-unique}.

\begin{proof}[Proof of Theorem \ref{theo:coiso-unique}] Let $H\in C^0_\Ham$ such that $H|_{C}$ is a function of time only and pick $p\in C$. 
%According to Lemma \ref{lemm:foliat-loc-preserved}, there exists $\eps>0$ such that $\phi_H^t(p)\in\m F(p)$ for all $t\in[0,\eps]$. 
For a contradiction, assume that for some $t>0$, $\phi_H^t(p)\notin\m F(p)$ and set $t_0=\inf\{t>0\,|\,\phi_H^t(p)\notin\m F(p)\}$. Note that since $C$ is a closed subset, the point $\phi_H^{t_0}(p)$ belongs to $C$.  Then, consider the Hamiltonian $K_t=-H_{-t+t_0}$, so that $\phi_K^t=\phi_H^{-t+t_0} (\phi_H^{t_0})^{-1}$. Its restriction to $C$ is also a function of time.
 Lemma \ref{lemm:foliat-loc-preserved} applied to $K$ at the point $\phi_H^{t_0}(p)$ implies that for some small $t>0$, $\phi_{H}^{t_0-t}(p)\in \m F(\phi_H^{t_0}(p))$. But by definition of $t_0$, we also have $\phi_{H}^{t_0-t}(p)\in \m F(p)$, hence $\phi_{H}^{t_0}(p)\in \m F(p)$.
Now apply  Lemma \ref{lemm:foliat-loc-preserved} again to $H_{t+t_0}$ at the point  $\phi_{H}^{t_0}(p)$. We get that for some $\eps'>0$  and all $t\in [t_0,t_0+\eps']$, $\phi_H^t(p)\in \m F(p)$ which contradicts the definition of $t_0$. Thus, $\phi_H^t(p)\in\m F(p)$ and the direct implication of Theorem \ref{theo:coiso-unique} follows.

We now prove the converse. Assume that the flow of $H\in C^0_{\Ham}$ preserves each leaf of the characteristic foliation. We are going to show first that the function $H_0$ is locally constant. 

Let $p\in C$ and $\theta \co U\to V$ be a coisotropic chart around $p$, with $\theta(p)=0$. For $\sigma>0$ small enough, the set $\cap_{t\in[0,\sigma]}(\phi_H^t)^{-1}(U)$ contains $p$ in its interior. Denote by $U'$ this interior for some fixed $\sigma$. Similarly, for $s\in(0,\sigma]$ small enough, $\cap_{t\in[0,s]}\phi_H^t(U')$ contains $p$ in its interior. 
Let $U''$ be an open neighborhood of $p$ contained in this interior, and with the property that $\theta(U'')$ is convex. Let $q$ be any other point in $U''$ and $\Lambda$ be a linear Lagrangian subspace included in $\m C_0$, containing $\theta(q)$ and the standard leaf $\m F_0(0)$. The subspace $\Lambda$ can be written as the union of the leaves $\m F_0(x)$ for all $x\in \Lambda$.

Now, consider the Lagrangian $L=\theta^{-1}(\Lambda\cap V)$. Let $\m U=L\cap U'$ and $\m V=L\cap U''$. By construction, $q\in\m V$.
By assumption $\phi_H^t(\m U)\subset L$ for all $t\in[0,s]$. We may apply Proposition \ref{prop:C0-preserved->constant} to $L$ and the continuous Hamiltonian $K_t(x)=sH_{st}(x)$ which generates the hameotopy $\phi_H^{st}$. We get that for any $t\in[0,1]$, $K_t$ is locally constant on $\m V$. Equivalently, for any $t\in[0,s]$, $H_t$ is locally constant on $\m V$.
Now since $\theta(U'')$ is convex and $\Lambda$ is linear, $\theta(U'')\cap\Lambda$ is connected. It follows that $\m V$ is also connected and therefore $H_t(p)=H_t(q)$. To summarize, we proved that for $t$ small enough, $H_t$ is constant on $U''\cap C$. In particular, $H_0$ is locally constant on $C$.

Since $C$ is assumed to be connected, this means that $H_0$ is constant on $C$. The argument we followed for $t=0$ applies for any other initial time. Thus, $H_t$ must be constant on $C$ for any $t$. 
\end{proof}

The proof of Theorem \ref{theo:smooth-C0-coiso-is-coiso} relies on the first implication of Theorem \ref{theo:coiso-unique} and the following characterization of coisotropic submanifolds and their characteristic foliations: 

\emph{A submanifold is coisotropic if and only if the flow of every autonomous Hamiltonian constant on it preserves it. Moreover, the leaf through a point $p$ is locally the union of the orbits of $p$ under the flows of all such Hamiltonians}. 

The next lemma is based on this characterization. 

\begin{lemma}\label{lemm:charact-smooth-coiso} Let $C$ be a submanifold in a symplectic manifold $(M,\omega)$. Assume that every point $p\in C$ admits an open neighborhood $V$ such that any $H\in C_c^\infty(V)$, with $H|_C\equiv 0$, satisfies $\phi_H^t(p)\in C$ for every $t\in[0,+\infty)$. Then $C$ is coisotropic. 

Moreover, for such a neighborhood $V$, there exists a smaller neighborhood $W\Subset V$ such that, the leaf $\m F(p)$ of the characteristic foliation of $C$ passing through $p$ satisfies
$$W\cap \m F(p)=W\cap \{\phi_H^t(p)\,|\,t\in[0,+\infty), H\in C_c^\infty(V), H|_C\equiv 0\}.$$
\end{lemma}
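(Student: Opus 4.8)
The plan is to reduce everything to a single linear-algebra computation at the level of Hamiltonian vector fields and then feed it into the hypothesis. Fix $p\in C$ and the neighborhood $V$ supplied by the hypothesis, and write $X_H$ for the Hamiltonian vector field of $H$. The condition $H|_C\equiv 0$ forces $dH_p$ to annihilate $T_pC$, i.e. $dH_p\in (T_pC)^\circ$; conversely, working in a submanifold chart for $C$ one checks that \emph{every} element of the annihilator $(T_pC)^\circ$ is realized as $dH_p$ for some $H\in C_c^\infty(V)$ with $H|_C\equiv 0$ (prescribe the differential at $p$ and cut off). Since the isomorphism $T_p^*M\to T_pM$ induced by $\omega$ carries $(T_pC)^\circ$ exactly onto $(T_pC)^\omega$, I would conclude
$$\{X_H(p)\,|\,H\in C_c^\infty(V),\ H|_C\equiv 0\}=(T_pC)^\omega.$$
This identity is the technical heart of the lemma. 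Coisotropy is then immediate: for any such $H$, the curve $t\mapsto\phi_H^t(p)$ is smooth and, by hypothesis, lies in the embedded submanifold $C$ for $t\in[0,+\infty)$; hence its velocity at $t=0$, namely $X_H(p)$, lies in $T_pC$. Combined with the displayed identity this gives $(T_pC)^\omega\subset T_pC$, which is exactly the coisotropy condition, and since $p$ was arbitrary $C$ is coisotropic.

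For the statement about the leaf I would prove the two inclusions separately. For the inclusion of the orbit set into the leaf, the computation above applies verbatim at every point $q\in C\cap V$, so $X_H(q)\in (T_qC)^\omega$; now that $C$ is known to be coisotropic this says $X_H$ is everywhere tangent to the characteristic distribution $(TC)^\omega$ along $C\cap V$. A vector field tangent to an integrable distribution has a flow preserving its integral foliation, so as long as the orbit remains in $C$ (which the hypothesis guarantees) one gets $\phi_H^t(p)\in\m F(p)$ for all $t\geq 0$, and intersecting with $W$ gives this inclusion.

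For the reverse inclusion I would pass to a coisotropic chart $\theta\co U_0\to\Omega$ with $U_0\subset V$, $\theta(p)=0$, sending $C$ to $\m C_0$ and the characteristic foliation to that of $\m C_0$. A point $q$ of the local leaf through $p$ corresponds to $\theta(q)=(0,\dots,0,c_{n-k+1},\dots,c_n,0,\dots,0)\in\m F_0(0)$; the Hamiltonian $h=\chi\cdot\sum_{j=n-k+1}^n c_j y_j$, with $\chi$ a cutoff equal to $1$ along the segment from $0$ to $\theta(q)$ and supported in $\Omega$, satisfies $X_h=\sum_j c_j\,\partial_{x_j}$ along that segment, so $\phi_h^1(0)=\theta(q)$. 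Pulling back, $H=h\circ\theta$ lies in $C_c^\infty(V)$, vanishes on $C$, and satisfies $\phi_H^1(p)=q$; thus $q$ belongs to the right-hand side, and since $t=1>0$ forward time suffices. Taking $W\Subset U_0$ small enough makes all these constructions legitimate.

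The main obstacle is reconciling the single-orbit reachable set with $W\cap\m F(p)$: a priori the global leaf $\m F(p)$ may re-enter $W$ in plaques other than the one through $p$, while the orbit of a single autonomous $H$ supported in $V$ stays inside the plaque through $p$ and so cannot reach such extra plaques. I would resolve this by taking $W$ inside the single coisotropic chart $U_0\subset V$ and using the chart to confine the characteristic orbits issuing from $p$ to the plaque $\m F_0(0)$, so that the component of $\m F(p)\cap W$ relevant to the equality is exactly the local leaf covered by the straight-line Hamiltonians above. Care is then needed to ensure that the connecting segments, and hence the supports of the Hamiltonians $H$, remain within $U_0\subset V$; this is arranged by shrinking $W$.
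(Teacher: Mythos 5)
Your argument is correct and, for the coisotropy claim and the inclusion of the orbit set into the leaf, it coincides with the paper's: both rest on the observation that the vectors $X_H(p)$, for $H\in C_c^\infty(V)$ vanishing on $C$, span exactly $(T_pC)^\omega$, and that the orbit velocities lie in $T_pC$ by the hypothesis. The only real divergence is in the reverse inclusion. The paper stays chart-free: it takes local defining functions $f_1,\dots,f_k$ of $C$, cut off so as to be supported in $V$ and to vanish on $C$, and applies the inverse function theorem to $F(v_1,\dots,v_k)=\phi^1_{\sum v_if_i}(p)$, whose partial derivatives at $0$ are the $X_{f_i}(p)$ and span $(T_pC)^\omega=T_p\m F(p)$, so that $F$ parametrizes a neighborhood of $p$ in the leaf by points of the orbit set. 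You instead invoke the coisotropic normal form and reach each point of the local plaque by an explicit cut-off linear Hamiltonian $\chi\cdot\sum c_jy_j$; this is equally valid (the chart theorem is recalled and used by the paper elsewhere in the same section) and has the small advantage of visibly covering the whole plaque rather than the unspecified neighborhood produced by the inverse function theorem. Your explicit treatment of the plaque-versus-global-leaf ambiguity is also a point in your favor: the paper's proof likewise only establishes the reverse inclusion for a neighborhood of $p$ inside $\m F(p)$ in the leaf topology, so the stated equality must be read at the level of the local leaf after shrinking $W$, exactly as you arrange.
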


\begin{proof} Let $p\in C$ and let $V$ be an open subset as in the statement of the lemma. Assume that $C$ coincides locally with $f_1^{-1}(0)\cap \ldots\cap f_k^{-1}(0)$ for some smooth functions $f_1,\ldots, f_k$ whose differentials are linearly independent at $p$. By multiplying by an appropriate cutoff function, we can assume that these functions are defined everywhere on $M$, have compact support in $V$, and vanish on $C$.

The Hamiltonian vector fields at $p$ of $f_1,\ldots, f_k$ span $(T_pC)^{\omega}$, and by assumption belong to $T_pC$. Thus $(T_pC)^{\omega}\subset T_pC$ and $C$ is coisotropic.

Now, since the characteristic leaves are preserved by smooth Hamiltonians constant on $C$, we have the inclusion 
 $$\m F(p)\supset \{\phi_H^t(p)\,|\,t\in[0,+\infty), H\in C_c^\infty(V), H|_C\equiv 0\}.$$
Conversely, consider the map $$F \co \R^k\to C,\quad (v_1,\ldots, v_k)\mapsto \phi^1_{\sum_{i=1}^kv_if_i}(p).$$ Since,  $\sum_{i=1}^kv_if_i$ is constant on $C$, its flow preserves the characteristics, hence $F$ takes values in the characteristic leaf $\mathcal F(p)$ through $p$. The partial derivatives of $F$ at $0$ are $\partial_{v_i}F(0)=X_{f_i}(p)$ and in particular they are linearly independent and span $T_p\m F(p)=(T_pC)^\omega$. The inverse function theorem then shows that $F$ is a  diffeomorphism from a neighborhood of 0 to a neighborhood of $p$ in $\mathcal{F}(p)$. This shows $$W\cap \m F(p)\subset W\cap \{\phi_H^t(p)\,|\,t\in[0,+\infty), H\in C_c^\infty(V), H|_C\equiv 0\}$$ for some neighborhood of $p$ in $M$ and finishes the proof of Lemma \ref{lemm:charact-smooth-coiso}.
\end{proof}

We are now ready to prove Theorem \ref{theo:smooth-C0-coiso-is-coiso}.

\begin{proof}[Proof of Theorem \ref{theo:smooth-C0-coiso-is-coiso}] Let $C$ be a smooth coisotropic submanifold, and $\theta \co U\to V$ be a symplectic homeomorphism. Assume $C'=\theta(U\cap C)$ is smooth. Let $p'\in C'$ and $p=\theta^{-1}(p')$. By passing to an appropriate Darboux chart around $p$, we may assume that $U\subset\R^{2n}$, $p=0\in\R^{2n}$, and $C=\m C_0$.
We are going to prove that any function $H\in C_c^\infty(V)$, with $H|_{C'}\equiv 0$, satisfies $\phi_H^t(p')\in C'$ for all $t\in[0,+\infty)$. According to Lemma \ref{lemm:charact-smooth-coiso}, this will imply that $C'$ is coisotropic.

Let $H$ be such a function and consider the function $H\circ\theta$. It is compactly supported in $U$ and can be extended  by 0 outside $U$ to a continuous compactly supported function $K \co \R^{2n}\to \R$. Since $H$ is smooth and $\theta$ is a symplectic homeomorphism, $K\in C^0_{\Ham}(\R^{2n},\omega_0)$. 
Since $K|_{\m C_0}=0$, Theorem \ref{theo:coiso-unique} yields $\phi_K^t(0)\in\m F_0(0)$ for any $t\geq 0$. Since $K$ has support in $U$, we have 
\begin{equation}\label{eq:preserved-leaf}\forall t\geq 0,\ \phi_K^t(0)\in\m F_0(0)\cap U\subset \m C_0\cap U.
\end{equation}
Since $\phi_H^t=\theta \phi_K^t \theta^{-1}$, we deduce $\phi_H^t(p')\in C'$ as desired and hence that $C'$ is coisotropic.  

Denote $\m F'$ the characteristic foliation of $C'$. From (\ref{eq:preserved-leaf}), we deduce that for any $H\in C_c^\infty(V)$, $H|_{C'}\equiv 0$, 
$$\forall t\geq 0,\ \phi_H^t(p')\in \theta(\m F_0(0)\cap U).$$
Now according to Lemma \ref{lemm:charact-smooth-coiso}, there exists a neighborhood $W\subset V$ such that 
$$W\cap \m F'(p')=W\cap \{\phi_H^t(p')\,|\,t\in[0,+\infty), H\in C_c^\infty(V), H|_{C'}\equiv 0\}.$$
Thus,
$$W\cap \m F'(p')\subset W\cap\theta(\m F_0(0)).$$
We get the reverse inclusion by switching the roles of $C$ and $C'$, and we see that $\theta$ sends locally $\m F_0(0)$ onto $\m F'(p')$. 
\end{proof}

Let us now turn to the proof of Theorem \ref{theo:emman_question}. The proof has three main ingredients: the Lagrangian case in Theorem \ref{theo:coiso-unique} (i.e., Propositions \ref{prop:c0lag-constant->preserved} and \ref{prop:C0-preserved->constant}), Theorem \ref{theo:smooth-C0-coiso-is-coiso}, and the fact that the graph of the characteristic foliation, given by $$\Gamma(\m F)=\{(x,x')\in M\times M\,|\,x\in C, x'\in\m F(x)\},$$
is Lagrangian in the product $M\times M$ endowed with the symplectic form $\omega \oplus (-\omega)$, as long as it is a submanifold. 

\begin{proof}[Proof of Theorem \ref{theo:emman_question}] Let $p\in C$ and $(U,\theta)$ be a coisotropic chart of $C$ around $p$ sending $p$ to 0. The symplectic diffeomorphism $\Theta=\theta\times\theta$, defined on $U\times U$ maps $\Gamma(\m F)$ to the graph of the standard characteristic foliation 
  \begin{align*}
\Gamma(\m F_0)=\{(x_1,\ldots,x_n,y_1,\ldots,y_n,x_1',&\ldots,x_n',y_1',\ldots,y_n')\in\R^{2n}\times\R^{2n}\,| \\
\forall i\in\{n-k+1,&\ldots,n\}, y_i=y_i'=0 \text{ and } \\
& \forall j\in\{1,\ldots,n-k\},x_i=x_i', y_i=y_i'\} \,. 
 \end{align*}
Since $\Gamma(\m F_0)$ is a Lagrangian submanifold of $\R^{2n}\times\R^{2n}$, then $\Lambda=\Theta^{-1}(\Gamma(\m F_0))=(U\times U)\cap \Gamma(\m F)$ is a Lagrangian submanifold of $M\times M$. 

Now note that if $H\in C^0_{\Ham}$ then the function $K:[0,1]\times M\times M\to\R$ given by $K_t(x,x')=H_t(x)-H_t(x')$ is a continuous Hamiltonian generating the hameotopy $\phi_H\times\phi_H$ (recall that the symplectic form on $M\times M$ is $\omega\oplus(-\omega)$). 

Assume for a contradiction that $H$ is a function of time on every leaf of the characteristic foliation $\m F$ of $C$ and that for some point $q\in C$ and some time $t<1$, $\phi_H^t(q)\notin C$. Set $t_0=\sup\{t\geq 0\,|\, \phi_H^t(q)\in C\}$. Since $C$ is a closed subset of $M$, $p=\phi_H^{t_0}(q)\in C$ and we can assume that the above construction yielding the construction of $\Lambda$ is performed in the neighborhood of this point. Consider the ''time-reparametrized'' Hamiltonians $\tilde{H}$, $\tilde{K}$ given by $\tilde{H}(t,x)=(1-t_0)H(t_0+(1-t_0)t,x)$ and $\tilde{K}(t,x,x')=\tilde{H}(t,x)-\tilde{H}(t,x')$. 
The fact that $H$ is a function of time on any leaf implies that the restriction of $\tilde{K}$ to $\Gamma(\m F)$ is identically 0. In particular, it vanishes on the Lagrangian $\Lambda$ and according to Proposition \ref{prop:c0lag-constant->preserved} there exists $\eps>0$ such that for all $t\in[0,\eps]$, $\phi_{\tilde{K}}^t(p,p)=(\phi_{\tilde{H}}^t\times\phi_{\tilde{H}}^t)(p,p)\in\Lambda$. This implies that $$\phi_H^{t_0+\eps(1-t_0)}(q)=\phi_{\tilde{H}}^\eps(p)\in C,$$ which contradicts the maximality of $t_0$. 

Conversely, assume that the flow $\phi_H^t$ preserves $C$. By Theorem \ref{theo:smooth-C0-coiso-is-coiso}, $\phi_H^t$ sends leaves to leaves and in particular, it preserves the graph of the foliation. Therefore, for any point $p\in C$, we may apply Proposition \ref{prop:C0-preserved->constant} to the Lagrangian $\Lambda$ and the continuous Hamiltonian $K$. We get that on a neighborhood of $(p,p)$, and for small times $t$, $K_t$ is constant. Since $K_0(p,p)=0$ we get that $K_0$ vanishes in a neighborhood of $(p,p)$. But this implies that $H_0$ is constant on a neighborhood of $p$ in the leaf $\m F(p)$. The argument can be performed for any $p\in C$ and at any initial time $t$ instead of 0. It shows that $H_t$ is locally constant, hence constant, on leaves.
\end{proof}

% #############################################
%
%   SECTION
%
% #############################################

\section{Defining $C^0$--Coisotropic submanifolds and their characteristic foliations} \label{sec:C0_coisotropics}

In this section we will use Theorem \ref{theo:smooth-C0-coiso-is-coiso} to define $C^0$--coisotropic submanifolds and their characteristic foliations.  Below, we assume that $\bb R^{2n}$ is equipped with the standard symplectic structure. Recall from the beginning of Section \ref{sec:coisotr-subm-foliat} that every coisotropic submanifold of codimension $k$ is locally symplectomorphic to 
\begin{align*}
    \m C_0 = \{ (x_1,\ldots, x_n, y_1,\ldots, y_{n}) \,|\, (y_{n-k+1},\ldots, y_{n}) =(0, \ldots,0)  \}\subset \R^{2n} \;,
  \end{align*}
and that the leaf of its characteristic foliation, $\m F_0$, passing through $p=(a_1,\ldots, a_n, b_1,\ldots, b_{n-k},0, \ldots,0)$ is given by
\begin{align*}
  \m F_0(p) = \{ (a_1,\ldots, a_{n-k}, x_{n-k+1},\ldots, x_{n}, b_1,\ldots, &b_{n-k},0, \ldots,0)  \,|\, \\
& (x_{n-k+1},\ldots, x_{n}) \in \bb R^{k} \} \;.
\end{align*}

\begin{definition}\label{def: c0-coisotropic}
  A codimension--$k$ $C^0$--submanifold $C$ of a symplectic manifold $(M,\omega)$ is \emph{$C^0$--coisotropic} if around each point $p\in C$ there exists a \emph{$C^0$--coisotropic chart}, that is, a pair $(U, \theta)$ with $U$ an open neighborhood of $p$ and  $\theta \co U \rightarrow V\subset \bb R^{2n}$ a symplectic homeomorphism, such that $\theta (C\cap U)=\m C_0\cap V$.

 A codimension--$n$ \emph{$C^0$--coisotropic} submanifold is called a \emph{$C^0$--Lagrangian.}
\end{definition}

\begin{example}
Graphs of symplectic homeomorphisms are $C^0$--Lagrangians. Graphs of differentials of $C^1$ functions and, more generally, graphs of $C^0$ 1--forms, closed in the sense of distributions, provide a family of non trivial examples; see Proposition \ref{prop:graphs-c0-closed} for a proof.

Conversely, we could ask whether every continuous 1--form whose graph is a $C^0$--Lagrangian is closed in the sense of distributions. An affirmative answer in a particular case appears in Viterbo \cite[Corollary 22]{viterbo2}.
\end{example}

As a consequence of Theorem \ref{theo:smooth-C0-coiso-is-coiso}, $C^0$--coisotropic submanifolds carry  ($C^0$--) characteristic foliations in the following sense.
\begin{prop}\label{coro:c0coiso-have-c0foliation}
Any $C^0$--coisotropic submanifold $C$ admits a unique $C^0$--foliation $\m F$ which is mapped to $\m F_0$ by any $C^0$--coisotropic chart.
\end{prop}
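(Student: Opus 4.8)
The plan is to define the foliation $\m F$ chart by chart and then to invoke Theorem \ref{theo:smooth-C0-coiso-is-coiso} to show that this local prescription is independent of the chosen chart. Concretely, given $p\in C$ and a $C^0$--coisotropic chart $(U,\theta)$ around $p$, I would declare the plaque of $\m F$ through a point $q\in C\cap U$ to be $\theta^{-1}\big(\m F_0(\theta(q))\cap\theta(U)\big)$. Since $\theta$ is a homeomorphism onto an open subset of $\RN$ and $\m F_0$ is a genuine (smooth) foliation of $\m C_0$, this prescription defines a $C^0$--foliation of $C\cap U$ modeled on $\m F_0$. The whole content of the proposition then reduces to a compatibility statement: if $(U_1,\theta_1)$ and $(U_2,\theta_2)$ are two $C^0$--coisotropic charts, the two resulting foliations agree on $C\cap U_1\cap U_2$.

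To establish this compatibility, I would consider the transition map $\psi=\theta_2\circ\theta_1^{-1}$, a symplectic homeomorphism defined on the open set $\theta_1(U_1\cap U_2)\subset\RN$ which maps $\m C_0\cap\theta_1(U_1\cap U_2)$ onto $\m C_0\cap\theta_2(U_1\cap U_2)$. Here the source $\m C_0$ is a smooth coisotropic submanifold and its image under $\psi$ is again a piece of the smooth coisotropic $\m C_0$; thus Theorem \ref{theo:smooth-C0-coiso-is-coiso} applies verbatim and yields that $\psi$ carries the characteristic foliation of the source to that of its image. But both of these characteristic foliations are $\m F_0$, so $\psi$ sends each plaque of $\m F_0$ to a plaque of $\m F_0$; in particular $\psi\big(\m F_0(\theta_1(q))\big)$ coincides, near $\theta_2(q)$, with $\m F_0(\theta_2(q))$. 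Pulling this identity back by $\theta_2^{-1}$ and using $\psi=\theta_2\theta_1^{-1}$ gives $\theta_1^{-1}(\m F_0(\theta_1(q)))=\theta_2^{-1}(\m F_0(\theta_2(q)))$ near $q$, which is precisely the required agreement of plaques.

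Granting compatibility, the local foliations glue to a well-defined global $C^0$--foliation $\m F$ of $C$ which, by construction, is carried to $\m F_0$ by every $C^0$--coisotropic chart; this gives existence. Uniqueness is then immediate, since any $C^0$--foliation with this property must, read in each chart, have plaques equal to the $\theta$--preimages of the plaques of $\m F_0$, and hence must coincide with $\m F$. I expect the only genuine obstacle to be the compatibility step, and the essential point is that it is exactly here that the full strength of Theorem \ref{theo:smooth-C0-coiso-is-coiso} is indispensable: one needs the rigidity of the \emph{characteristic foliation} under symplectic homeomorphisms, not merely that of the underlying coisotropic submanifold. Everything else is the routine bookkeeping of patching together a foliation from local models.
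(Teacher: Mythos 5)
Your proposal is correct and follows essentially the same route as the paper: define the plaques chart by chart as $\theta^{-1}(\m F_0)$ and reduce everything to the compatibility of two charts, which is exactly Theorem \ref{theo:smooth-C0-coiso-is-coiso} applied to $C=\m C_0$ and the transition map $\theta_1\theta_2^{-1}$ (the paper uses the inverse of your $\psi$, an immaterial difference). Your identification of the compatibility step as the only place where the full strength of the rigidity of the characteristic foliation is needed matches the paper's reasoning precisely.
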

\begin{proof}If such a foliation exists it has to coincide with $\theta^{-1}(\m F_0)$ on the domain of any $C^0$--coisotropic chart $\varphi$. The only thing to check is that for any two $C^0$--coisotropic charts $\theta_1 \co U_1\to V_1$ and $\theta_2 \co U_2\to V_2$, the foliations $\theta_1^{-1}(\m F_0)$ and $\theta_2^{-1}(\m F_0)$ coincide on $U_1\cap U_2$. But this follows immediately from Theorem \ref{theo:smooth-C0-coiso-is-coiso} applied to $C=\m C_0$ and $\theta=\theta_1 \theta_2^{-1} \co \theta_2(U_1\cap U_2)\to \theta_1(U_1\cap U_2)$.\end{proof}

Theorem \ref{theo:smooth-C0-coiso-is-coiso} states that a smooth $C^0$--coisotropic submanifold is coisotropic and its natural $C^0$--foliation coincides with its characteristic foliation.

\begin{example}
  If $C=\theta (C')$, with $C'$ a smooth coisotropic submanifold and $\theta$ a symplectic homeomorphism, then $\m F=\theta (\m F')$ where $\m F'$ is the characteristic foliation of $C'$. 
	
   One may wonder if every topological hypersurface is $C^0$--coisotropic.  It is possible to show, via an application of Proposition \ref{coro:c0coiso-have-c0foliation}, that the boundary of the standard cube in $\R^4$ does not possess a $C^0$--characteristic foliation, and hence, it is not  $C^0$--coisotropic.  
\end{example}

%\begin{corol}\label{coro:smooth-c0foliation=characteritic}
% A smooth $C^0$--coisotropic submanifold is coisotropic and the natural $C^0$--foliation it carries coincides with its characteristic foliation.
%\end{corol}

The following proposition tells us that Theorems \ref{theo:coiso-unique} and \ref{theo:emman_question} hold for $C^0$--coisotropic submanifolds.
\begin{prop}\label{prop:C0_coiso-unique}
  Denote by $C$ a connected $C^0$--coisotropic submanifold of a symplectic manifold $(M, \omega)$  which is closed as a subset of $M$. Let $H\in C^0_\Ham$ with induced hameotopy $\phi_H$. 
	\begin{enumerate}
  \item The restriction of $H$ to $C$ is a function of time if and only if $\phi_H$ preserves $C$ and flows along the leaves of its ($C^0$--)characteristic foliation.
	\item The restriction of $H$ to each leaf of the characteristic foliation of $C$  is a function of time if and only if the flow $\phi_H$ preserves $C$. % For each $p \in C, \; H|_{\m F (p)}$ is a function of time if and only if the flow $\phi_H$ preserves $C$.
	\end{enumerate}
\end{prop}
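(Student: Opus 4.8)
The plan is to reduce every assertion to the already-established smooth case by transporting the dynamics into the standard local model via the defining $C^0$--coisotropic charts. By Definition \ref{def: c0-coisotropic}, around each $p\in C$ there is a symplectic homeomorphism $\theta\co U\to V\subset\R^{2n}$ with $\theta(C\cap U)=\m C_0\cap V$, and by Proposition \ref{coro:c0coiso-have-c0foliation} this $\theta$ carries the $C^0$--characteristic foliation $\m F$ to the standard (smooth) foliation $\m F_0$. The central mechanism is the transfer property recorded in Section \ref{sec:preliminaries}: conjugation by a symplectic homeomorphism preserves $C^0_\Ham$ and intertwines the generated hameotopies. Since $\m C_0$ is a \emph{smooth} coisotropic subspace of $\R^{2n}$ whose characteristic leaves are \emph{smooth} affine Lagrangians, after transfer we land exactly in the situation handled by Lemma \ref{lemm:foliat-loc-preserved} and Propositions \ref{prop:c0lag-constant->preserved} and \ref{prop:C0-preserved->constant}.

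For part (1), I would first prove the local analog of Lemma \ref{lemm:foliat-loc-preserved}: if $H|_C$ is a function of time, then for each $p\in C$ there is $\eps>0$ with $\phi_H^t(p)\in\m F(p)$ for $t\in[0,\eps]$. Normalizing $H|_C\equiv 0$ and fixing a chart $\theta$ as above, I would approximate $H$ by smooth $H_i$ and cut off by a function $\beta$ supported in $U$ and equal to $1$ near the short-time trajectory of $p$, exactly as in the proof of Proposition \ref{prop:c0lag-constant->preserved}; the cut-off Hamiltonian $\beta H$ lies in $C^0_\Ham$, vanishes on $C$, and has the same flow as $H$ near $p$ for small time. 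Pushing it forward by $\theta$ and extending by $0$ produces $K\in C^0_\Ham(\R^{2n})$ with $K|_{\m C_0}=0$ generating $\theta\phi_H^t\theta^{-1}$ near $\theta(p)$; Lemma \ref{lemm:foliat-loc-preserved} applied to the smooth $\m C_0$ gives $\phi_K^t(\theta(p))\in\m F_0(\theta(p))$ for small $t$, and transferring back through $\theta^{-1}$ yields $\phi_H^t(p)\in\theta^{-1}(\m F_0(\theta(p)))=\m F(p)$. The passage from small time to all $t$, and hence to global preservation of $C$, is then the verbatim closedness-plus-connectedness argument of the proof of Theorem \ref{theo:coiso-unique}, which uses only the local statement and the hypothesis that $C$ is closed as a subset. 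For the converse implication of part (1), I would likewise transport $H$ to the standard model and run the local part of the converse argument of Theorem \ref{theo:coiso-unique}: there the only use of smoothness was in forming the pulled-back linear Lagrangian $L=\theta^{-1}(\Lambda\cap V)$ and applying Proposition \ref{prop:C0-preserved->constant} to it. Since the hypothesis that $\phi_H$ flows along leaves transfers to the statement that $\theta\phi_H^t\theta^{-1}$ preserves each leaf of $\m F_0$, and since in the model $\Lambda$ is a genuine smooth Lagrangian, Proposition \ref{prop:C0-preserved->constant} applies and shows $K$, hence $H$, is locally constant on $C$; connectedness of $C$ upgrades this to a function of time.

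Part (2) is handled by the same transfer applied to the Lagrangian graph of the foliation. Using the product chart $\Theta=\theta\times\theta$, which is a symplectic homeomorphism of $(M\times M,\omega\oplus(-\omega))$ onto its image carrying $\Gamma(\m F)$ to the \emph{smooth} Lagrangian $\Gamma(\m F_0)$, together with the continuous Hamiltonian $K_t(x,x')=H_t(x)-H_t(x')$ generating $\phi_H\times\phi_H$, I would repeat the argument proving Theorem \ref{theo:emman_question} word for word: the two hypotheses (flowing along leaves, respectively preserving $C$) are invariant under $\Theta$ and under passing from $\m F$ to $\m F_0$, and the two Lagrangian propositions are now legitimately applied to the smooth $\Gamma(\m F_0)$. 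Here Theorem \ref{theo:smooth-C0-coiso-is-coiso} is used exactly as before to guarantee that $\phi_H$ preserving $C$ forces it to preserve the graph of the foliation.

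I expect the main obstacle to be purely technical bookkeeping in the transfer step: verifying that cutting off a continuous Hamiltonian and pushing it forward by a symplectic homeomorphism keeps it in $C^0_\Ham$ while its flow still agrees with that of $H$ on a neighborhood of $p$ for small times, and checking that $\Theta=\theta\times\theta$ is genuinely a symplectic homeomorphism of the product sending $\Gamma(\m F)$ to $\Gamma(\m F_0)$. Once these localization points are in place, no new idea beyond those already used for the smooth case is required: the essential content is that ``$C^0$--coisotropic'' means ``locally the smooth model $\m C_0$ up to a symplectic homeomorphism,'' which is precisely what lets the smooth proofs be replayed in $\R^{2n}$.
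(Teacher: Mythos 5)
The paper itself does not prove Proposition \ref{prop:C0_coiso-unique}; it only remarks that the proofs of Theorems \ref{theo:coiso-unique} and \ref{theo:emman_question} should be adapted by introducing $C^0$--coisotropic charts and using the fact that conjugation by a symplectic homeomorphism preserves $C^0_\Ham$. Your proposal is exactly this adaptation, and its overall architecture (transfer everything to the smooth model $\m C_0$, respectively $\Gamma(\m F_0)$, where Lemma \ref{lemm:foliat-loc-preserved} and Propositions \ref{prop:c0lag-constant->preserved} and \ref{prop:C0-preserved->constant} legitimately apply, then run the globalization arguments verbatim) is the intended one and is correct.

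One step deserves more care than you give it: you assert that the cut-off Hamiltonian $\beta H$ lies in $C^0_\Ham$. For a merely continuous $H\in C^0_\Ham$ this is not known (and is a genuinely delicate point of the theory): $\beta H_i\to\beta H$ uniformly, but there is no reason for the flows $\phi^t_{\beta H_i}$ to $C^0$--converge away from the region where $\beta=1$. Note that the paper's own proofs never claim this; in Proposition \ref{prop:c0lag-constant->preserved} the objects $G_i=\beta H_i$ are used only as a sequence of \emph{smooth} Hamiltonians to which the energy--capacity inequalities are applied, together with the local identity $\phi^t_{G_i}=\phi^t_{H_i}$ on $W$ for small $t$. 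The correct repair of your transfer step is the same: do not push forward the continuous $\beta H$, but rather work with smooth approximations $\beta\,(H_i\circ\theta_j^{-1})$ (cut off in the chart), extract a diagonal subsequence converging uniformly to $(\beta H)\circ\theta^{-1}$ whose flows converge to $\theta\phi^t_H\theta^{-1}$ near $\theta(p)$ for small times, and apply the spectral estimates to those smooth Hamiltonians and the smooth model Lagrangians. With that modification (and the analogous one for $\Theta=\theta\times\theta$ in part (2), where $\Theta^{-1}(\Gamma(\m F_0))$ is only a $C^0$--Lagrangian so the propositions must indeed be applied on the $\Gamma(\m F_0)$ side, as you say), your argument goes through.
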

The above can be proven by adapting the proofs of Theorems \ref{theo:coiso-unique} and \ref{theo:emman_question} to $C^0$--coisotropics.  We will not provide a proof for Proposition \ref{prop:C0_coiso-unique} here, and we only mention that to adapt the proofs one would have to introduce $C^0$--coisotropic charts and use the following simple fact: if $\theta$ is a symplectic homeomorphism and $H\in C^0_{Ham}$ then $H\circ \theta \in C^0_{Ham}$ and $\phi^t_{H\circ \theta} = \theta^{-1} \phi^t_H \theta.$ \\  %Although this is not difficult to do, it is notationally cumbersome and hence we will not provide a proof of Proposition \re{prop:C0_coiso-unique}.

Finally, we provide a family of non trivial examples of $C^0$--Lagrangians.

\begin{prop} \label{prop:graphs-c0-closed}Let $\alpha$ be a $C^0$ 1--form on a smooth manifold $N$ which is closed in the sense of distributions. Then, its graph, $\mathrm{graph}(\alpha)\subset T^*N$, is a $C^0$--Lagrangian.
\end{prop}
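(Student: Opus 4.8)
The plan is to realize $\mathrm{graph}(\alpha)$ as the image of the zero section under a \emph{fiber translation} and to prove that this translation is a symplectic homeomorphism. Since being a $C^0$--Lagrangian is a local condition (Definition \ref{def: c0-coisotropic}), I would fix a point of $N$ and a coordinate chart $\m O\subset N$ with coordinates $(x_1,\dots,x_n)$, so that $T^*\m O\cong \m O\times\R^n$ carries the canonical coordinates $(x,y)$ in which $\lambda=\sum_i y_i\,dx_i$ and $\omega=d\lambda$ are standard. Writing $\alpha=\sum_i\alpha_i(x)\,dx_i$ with each $\alpha_i$ continuous, consider the fiber translation
\[
  \tau_\alpha\co T^*\m O\to T^*\m O,\qquad \tau_\alpha(x,y)=(x,\,y+\alpha(x)),
\]
with $\alpha(x)=(\alpha_1(x),\dots,\alpha_n(x))$. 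This is a homeomorphism whose inverse is $\tau_{-\alpha}$, and it maps the zero section $L_0=\{y=0\}$ onto $\mathrm{graph}(\alpha)$. Observing that $L_0$ is precisely the standard linear Lagrangian $\m C_0$ of codimension $n$, the map $\theta=\tau_\alpha^{-1}=\tau_{-\alpha}$ will provide the $C^0$--coisotropic chart demanded by Definition \ref{def: c0-coisotropic}, \emph{once} I know that $\tau_\alpha$ is a symplectic homeomorphism.

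For a \emph{smooth} closed form the elementary identity $\tau_\alpha^*\lambda=\lambda+\pi^*\alpha$ gives $\tau_\alpha^*\omega=\omega+\pi^*d\alpha$, so $\tau_\alpha$ is a symplectomorphism exactly when $d\alpha=0$. The strategy is therefore to approximate the continuous closed form $\alpha$ by \emph{smooth closed} forms in the $C^0$ topology and to transport this to the translations. Concretely, I would fix a standard mollifier $\rho_\eps$ on $\R^n$ and set $\alpha^\eps=\sum_i(\alpha_i*\rho_\eps)\,dx_i$ on a slightly smaller chart. Each $\alpha^\eps$ is smooth; because convolution with a smooth compactly supported kernel commutes with the exterior derivative taken in the distributional sense, $d\alpha^\eps=(d\alpha)*\rho_\eps=0$, so $\alpha^\eps$ is closed. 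Since the $\alpha_i$ are continuous, $\alpha^\eps\to\alpha$ uniformly on compact subsets. It follows that the symplectomorphisms $\tau_{\alpha^\eps}$ $C^0$--converge to $\tau_\alpha$ and their inverses $\tau_{-\alpha^\eps}$ $C^0$--converge to $\tau_{-\alpha}=\tau_\alpha^{-1}$, which exhibits both $\tau_\alpha$ and $\tau_\alpha^{-1}$ as $C^0$--limits of symplectomorphisms; that is, $\tau_\alpha$ is a symplectic homeomorphism in the sense of Definition \ref{def:sympeo}.

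Granting this, $(T^*\m O,\tau_{-\alpha})$ is a $C^0$--coisotropic chart of codimension $n$ around the chosen point of $\mathrm{graph}(\alpha)$; in particular $\mathrm{graph}(\alpha)$ is a $C^0$--submanifold, since a homeomorphism carries it onto the smooth zero section. As such charts exist near every point, $\mathrm{graph}(\alpha)$ is a $C^0$--Lagrangian. I expect the only real (and otherwise quite standard) obstacle to be the approximation step, whose sole substantive point is that mollification yields smooth forms that remain \emph{closed} and converge uniformly to $\alpha$; this rests on the fact that convolution commutes with $d$ interpreted distributionally, together with the locality of the distributional closedness condition. The remaining ingredients are the elementary computation of $\tau_\alpha^*\omega$ and the locality built into the definition of a $C^0$--Lagrangian.
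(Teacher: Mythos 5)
Your proof is correct and follows essentially the same route as the paper's: localize to a coordinate chart, mollify the coefficients of $\alpha$ to obtain smooth closed 1--forms converging uniformly, and observe that the corresponding fiber translations are symplectomorphisms converging uniformly to $\tau_\alpha$, exhibiting $\mathrm{graph}(\alpha)$ locally as the image of the zero section under a symplectic homeomorphism. Your explicit remark that the inverses $\tau_{-\alpha^\eps}$ also converge to $\tau_{-\alpha}$ is a welcome small addition that the paper leaves implicit.
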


\begin{proof} Since the statement is local, it is sufficient to prove it when $N$ is an open set in $\R^n$. Then $\alpha$ can be written as $\alpha=\sum_{i=1}^n p_i(x)dx_i$, where $x_1,\ldots,x_n$ are the canonical coordinates in $\R^n$ and $p_1,\ldots,p_n$ continuous functions on $N$. The fact that $\alpha$ is closed is equivalent to the equations 
\begin{equation}\label{eq:closed-in-coordinates}
\forall i,j \in \{1,\ldots, n\}, \; \partial_jp_i=\partial_ip_j,
\end{equation}
where $\partial_ip_j$ is the $i$--th partial derivative of $p_j$ in the sense of distributions.

We use convolution to approximate $\alpha$. To that end, take a compactly supported smooth function $\rho$ such that $\rho\geq 0$, and $\int_N\rho(x)dx=1$ and set $\rho_\eps(x)=\frac1{\eps^n}\rho\!\left(\frac{x}{\eps}\right)$ for every $\eps>0$. For any continuous function $f$ on $N$, the functions 
$$f\ast\rho_\eps(x)=\int_Nf(y)\rho_\eps(x-y)dy$$
are well-defined on any compact subset of $N$ for $\eps$ small enough. Moreover, for any $\eps$, $f\ast\rho_\eps$ is smooth, converges locally uniformly to $f$ as $\eps$ goes to 0, its differential satisfies $d(f\ast\rho_\eps)=(df)\ast\rho_\eps$ and converges in the sense of distributions to $df$.

Let $U\Subset N$ be an open subset of $N$. Then, for $\eps$ small enough, 
$$\alpha_\eps=\sum_{i=1}^np_i\ast \rho_\eps \, dx_i$$
is a well-defined 1--form on $U$. It satisfies Equations (\ref{eq:closed-in-coordinates}) and thus is closed. Moreover, it converges uniformly to $\alpha$ on $U$.

Now let $\phi_\eps$ be the family of symplectic diffeomorphisms of $T^\ast U$ defined by $\phi_\eps(x,p)=(x,p+\alpha_\eps(x))$. They converge uniformly on $U$ to the symplectic homeomorphism $\phi \co T^\ast U\to T^\ast U$, $(x,p)\mapsto(x, p+\alpha(x))$ and $\mathrm{graph}(\alpha)$ restricted to $T^\ast U$ is $\phi(U)$. This shows that $\mathrm{graph}(\alpha)$ is locally the image of a smooth Lagrangian by a symplectic homeomorphism. 
\end{proof}

%This is, most probably,  a $C^0$ manifestation of Weinstein's creed: ``Everything is a $C^0$--Lagrangian submanifold!''

%%%%%%%%%%%%%%%%%%%%%%%%%%%%%%%%%%%%%
%Appendix
%%%%%%%%%%%%%%%%%%%%%%%%%%%%%%%%%%%%%

\appendix
\section{The main results for closed Lagrangian}
\label{sec:smooth-c0-lagrangian}
In this section we provide relatively simple proofs for Theorems \ref{theo:smooth-C0-coiso-is-coiso}, \ref{theo:coiso-unique}, and \ref{theo:emman_question} in an enlightening and important special case. We suppose that $M=T^*L$ equipped with its canonical symplectic structure for some closed smooth manifold $L$. Denote by $\theta$ a symplectic homeomorphism of $T^*L$. And let $L' = \theta(L_0)$, where $L_0$ denotes the 0--section of $T^*L$.  

Below, we will prove Theorems \ref{theo:smooth-C0-coiso-is-coiso}, \ref{theo:coiso-unique}, and \ref{theo:emman_question} in the special case where the coisotropic $C$ is taken to be  the zero section $L_0$.  In this case Theorem \ref{theo:smooth-C0-coiso-is-coiso} states the following:
\begin{theo}\label{theo:special_case_smooth_Lag}
 If $L'$ is smooth, then it is Lagrangian. 
\end{theo}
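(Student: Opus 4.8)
The plan is to reduce the statement to a dynamical property of the zero section and then feed it into the energy-capacity inequality of Lemma~\ref{lemm:LiRi-inequality}. Concretely, I will show that for every \emph{autonomous} $H\in C_c^\infty(T^*L)$ with $H|_{L'}=0$ the flow $\phi_H^t$ preserves $L'$ for all $t\geq 0$. Granting this, differentiating the identity $\phi_H^t(L')=L'$ at $t=0$ shows that $X_H$ is tangent to $L'$ along $L'$. Since the vectors $X_H(p)$, as $H$ ranges over the smooth functions vanishing on $L'$, fill out $(T_pL')^\omega$, this gives $(T_pL')^\omega\subseteq T_pL'$, so $L'$ is coisotropic; being of dimension $n$ in a $2n$--dimensional symplectic manifold, it is Lagrangian. (Alternatively one invokes Lemma~\ref{lemm:charact-smooth-coiso} directly.)

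To prove that $\phi_H^t$ preserves $L'$, I pull everything back by $\theta$. Set $K=H\circ\theta$. By the composition rule for hameotopies recalled in Section~\ref{sec:sympl-hamilt-home}, $K\in C^0_{\Ham}(T^*L)$ is compactly supported, $\phi_K^t=\theta^{-1}\phi_H^t\theta$, and $K|_{L_0}=(H|_{L'})\circ\theta=0$. Hence $\phi_H^t(L')=\theta\bigl(\phi_K^t(L_0)\bigr)$, and it suffices to prove that $\phi_K^t(L_0)=L_0$ for every $t\geq 0$. The advantage of working in $T^*L$ with $C=L_0$ is that $L_0$ is already a closed Lagrangian for which the spectral invariants of Theorem~\ref{theo:spectral-properties} are directly available, so none of the Weinstein-neighborhood or auxiliary-torus constructions of Sections~\ref{sec:proof-main-theo-Lagr} and~\ref{sec:coisotr-subm-foliat} are needed.

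For the identity $\phi_K^t(L_0)=L_0$, choose smooth Hamiltonians $K_i$ with $K_i\to K$ uniformly and $\phi_{K_i}\to\phi_K$ in the $C^0$--topology. Because $K|_{L_0}=0$, the oscillations $\max_{s\in[0,t]}\osc(K_i(s,\cdot)|_{L_0})$ tend to $0$, so the estimate~\eqref{eq:gamma-bdd-osc-on-L} yields $\gamma(\phi_{K_i}^t(L_0),L_0)\to 0$ for each fixed $t$. Writing $L''=\phi_K^t(L_0)$, the Lagrangians $\phi_{K_i}^t(L_0)$ converge to $L''$ in the Hausdorff sense (as $L_0$ is compact and $\phi_{K_i}^t\to\phi_K^t$ uniformly). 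The main obstacle is precisely to upgrade the soft convergence $\gamma\to 0$ to the exact equality $L''=L_0$, since one cannot appeal to $\gamma(L,L_0)=0\Rightarrow L=L_0$ directly at the continuous limit; this is where Lemma~\ref{lemm:LiRi-inequality} does the real work.

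I carry out this passage to the limit in two symmetric steps. If some $x_0\in L_0$ were not in $L''$, I pick a ball $U\ni x_0$ with $\overline{U}\cap L''=\emptyset$; then $\phi_{K_i}^t(L_0)\cap U=\emptyset$ for large $i$, while $U\cap L_0\neq\emptyset$, so Lemma~\ref{lemm:LiRi-inequality} forces $\gamma(\phi_{K_i}^t(L_0),L_0)\geq\chzrel(U;L_0)>0$, contradicting $\gamma\to 0$; hence $L_0\subseteq L''$. Applying the identical argument to the inverse diffeomorphisms $(\phi_{K_i}^t)^{-1}(L_0)$ — whose spectral distance to $L_0$ also tends to $0$ since $\gamma(\phi)=\gamma(\phi^{-1})$, and which converge to $(\phi_K^t)^{-1}(L_0)$ — gives $L_0\subseteq(\phi_K^t)^{-1}(L_0)$, i.e.\ $\phi_K^t(L_0)\subseteq L_0$. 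Combining the two inclusions yields $\phi_K^t(L_0)=L_0$, and translating back through $\theta$ gives $\phi_H^t(L')=L'$, which completes the argument by the reduction of the first paragraph.
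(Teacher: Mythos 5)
Your proposal is correct and follows essentially the same route as the paper: it transports the problem to $L_0$ via $\theta$, shows the resulting $C^0$--Hamiltonian's flow preserves $L_0$ by combining the oscillation bound \eqref{eq:gamma-bdd-osc-on-L} with the energy-capacity inequality of Lemma \ref{lemm:LiRi-inequality}, and concludes by the dynamical characterization of coisotropic submanifolds (the paper's ``second proof'' via Lemma \ref{lem:coiso_charact}). The only difference is presentational --- you inline the direct implication of Theorem \ref{theo:special_case_Lag_uniq} and the linear-algebra characterization instead of citing them.
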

In the settings considered in this appendix, Theorems \ref{theo:coiso-unique} and \ref{theo:emman_question} coincide and state the following:
\begin{theo}\label{theo:special_case_Lag_uniq}
Let $H\in C^0_{\Ham}$ with induced hameotopy $\phi_H$.  The restriction of $H$ to $L_0$ is a function of time if and only if $\phi_H$ preserves $L_0$.  
\end{theo}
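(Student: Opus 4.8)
The plan is to treat the two implications separately, reducing both to the energy-capacity machinery of Section \ref{sec:proof-prop}. A pleasant simplification in this special case is that, since $L_0$ is already a closed Lagrangian inside $T^*L$, one needs neither the local-structure device of Lemma \ref{lemm: local_structure} (the auxiliary torus) nor the passage to a Weinstein neighborhood: the whole argument can be run globally on $T^*L$. Throughout I fix smooth Hamiltonians $H_i$ with $H_i \to H$ uniformly and $\phi_{H_i} \to \phi_H$ in the $C^0$--topology.

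\emph{Suppose $H|_{L_0}$ is a function of time; I would show $\phi_H$ preserves $L_0$.} After subtracting a function of time I may assume $H|_{L_0} = 0$. Arguing by contradiction, if $\phi^{t_0}_H(p) \notin L_0$ for some $p \in L_0$ and $t_0 > 0$, then since $L_0$ is closed I can choose a ball $B$ with $p \in B$, $B \cap L_0 \neq \emptyset$, and $\overline{\phi^{t_0}_H(B)} \cap L_0 = \emptyset$; for large $i$ this yields $L_0 \cap \phi^{t_0}_{H_i}(B) = \emptyset$. Writing $\phi^{t_0}_{H_i}$ as the time--$1$ map of the reparametrized Hamiltonian $t_0 H_i(t_0 t, x)$ and invoking Remark \ref{osc_LiRi} gives $\max_t \osc\big(t_0 H_i(t_0 t, \cdot)|_{L_0}\big) \geq \chzrel(B; L_0) > 0$. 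The left-hand side tends to $0$ because $H_i \to H$ uniformly and $H|_{L_0} = 0$, a contradiction.

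\emph{Suppose $\phi_H$ preserves $L_0$; I would show $H|_{L_0}$ is a function of time.} This is the harder direction, and I would imitate the localization idea behind Proposition \ref{prop:C0-preserved->constant}. If $H|_{L_0}$ were not a function of time, then after a reparametrization in time I may assume there are points $p, q \in L_0$ with $H(0,p) \neq H(0,q)$. Choosing a symplectomorphism $\psi$ supported in a small set, preserving $L_0$, with $\psi(p) = q$, I form $G = (H \circ \psi - H) \circ \phi_H$, whose flow $(\phi^t_H)^{-1}\psi^{-1}\phi^t_H\psi$ visibly preserves $L_0$, which is supported in a small open set $U$, and which satisfies $G(0,p) = H(0,q) - H(0,p) \neq 0$ while vanishing near $\partial U$. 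The reason for localizing is that $U$ can be arranged so that $\pi(U)$ is a proper subset of $L$ whose complement contains a ball, so that the upper bound Lemma \ref{lemm:haus_cont_gamma} applies to smooth approximations $G_i$ supported in $U$. Since $\phi^t_G$ preserves $L_0$, for each fixed $r > 0$ and all large $i$ one has $\phi^t_{G_i}(L_0) \subset T^*_r L$, whence Lemma \ref{lemm:haus_cont_gamma} forces $\gamma(\phi^t_{G_i}(L_0), L_0) \to 0$ for every $t$. Proposition \ref{coro:spectral-conv-implies-constant} then implies that $G|_{L_0}$ is a function of time, contradicting that $G(0,\cdot)|_{L_0}$ is non-constant.

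I expect the converse implication to be the main obstacle, and within it the construction of the auxiliary Hamiltonian $G$: it must simultaneously have a flow preserving $L_0$, be supported in a set small enough that the vanishing hypothesis of Lemma \ref{lemm:haus_cont_gamma} over a proper $\m V \subset L$ is met by its smooth approximations, and remain non-constant on $L_0$. The two technical points demanding care are verifying that the approximations $G_i = (H_i \circ \psi - H_i) \circ \phi_{H_i}$ are eventually supported in $U$ and that $\phi^t_{G_i}(L_0)$ is genuinely squeezed into $T^*_r L$; once these are in hand, everything else reduces to the inequalities already established in Section \ref{sec:proof-prop}.
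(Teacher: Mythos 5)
Your proposal is correct and follows essentially the same route as the paper's proof: Remark \ref{osc_LiRi} (i.e.\ Lemma \ref{lemm:LiRi-inequality} plus Inequality \eqref{eq:gamma-bdd-osc-on-L}) for the direct implication, and the auxiliary Hamiltonian $G = (H\circ\psi - H)\circ\phi_H$ combined with Lemma \ref{lemm:haus_cont_gamma} and Proposition \ref{coro:spectral-conv-implies-constant} for the converse. The two technical points you flag are resolved in the paper exactly as you anticipate: a time reparametrization ensures $\phi^t_H(B), (\phi^t_H)^{-1}(B)\Subset U$ for all $t\in[0,1]$, which forces $G$ and the approximations $G_i = (H_i\circ\psi - H_i)\circ\phi_{H_i}$ to be supported in $U$, and the inclusion $\phi^t_{G_i}(L_0)\subset T^*_rL$ follows from $C^0$--convergence of the flows since $\phi^t_G(L_0)=L_0$.
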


We believe that the above special cases provide the reader with the opportunity to get an idea of the proofs of our main results without having to go through the technical details of Sections \ref{sec:proof-main-theo-Lagr} and \ref{sec:coisotr-subm-foliat}.

We will first show that Theorem \ref{theo:special_case_smooth_Lag} follows from Theorem \ref{theo:special_case_Lag_uniq}.  In order to do so we will need the following dynamical characterizations of isotropic and coisotropic submanifolds, respectively.

\begin{lemma}\label{lem:iso_charact}
Let $I$ denote a (smooth) submanifold of a symplectic manifold $(M, \omega)$.  The following are equivalent: 
\begin{itemize}
\item $I$ is isotropic,
\item For every smooth Hamiltonian $H$, if $\phi_H$ preserves $I$, then $H|_I$ is a function of time only.
\end{itemize}
\end{lemma}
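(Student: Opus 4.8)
The plan is to prove both implications, the forward one by a direct infinitesimal computation and the converse by contraposition with an explicit construction. Throughout I use the convention $\iota_{X_H}\omega = dH$, so that $dH(v)=\omega(X_H,v)$; the signs are irrelevant below. The basic dictionary I rely on is: for a (possibly time-dependent) smooth $H$, the flow $\phi_H$ preserves $I$ if and only if $X_{H_t}$ is tangent to $I$ at every point of $I$ for every $t$. Indeed, differentiating $\phi_H^t(I)=I$ shows $X_{H_t}$ is tangent to $I$ along $I$, and conversely a vector field tangent to $I$ along $I$ has $I$ as an invariant set. For the forward direction, suppose $I$ is isotropic and $\phi_H$ preserves $I$. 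Fix $t$ and $p\in I$. For every $v\in T_pI$ the dictionary gives $X_{H_t}(p)\in T_pI$, and since $I$ is isotropic $\omega(X_{H_t}(p),v)=0$. Hence $d(H_t)_p(v)=0$ for all $v\in T_pI$, i.e. $d(H_t|_I)=0$. Thus $H_t|_I$ is locally constant, and as $I$ is connected (or working one component at a time) $H|_I$ depends only on $t$.

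For the converse I argue by contraposition: assuming $I$ is not isotropic I build a smooth, compactly supported, autonomous $H$ whose flow preserves $I$ but with $H|_I$ non-constant, hence not a function of time. First I reformulate the two conditions in terms of the $1$--jet of $H$ along $I$: by the dictionary and $dH(v)=\omega(X_H,v)$, the flow preserves $I$ if and only if $dH$ annihilates $(T_pI)^\omega$ at every $p\in I$, whereas $H|_I$ is constant if and only if $dH$ annihilates $T_pI$ at every $p\in I$. Since $I$ is not isotropic there is a point $p$ and vectors $v,w\in T_pI$ with $\omega(v,w)\neq 0$; the covector $\xi_0=\omega(v,\cdot)$ then annihilates $(T_pI)^\omega$ (because $v\in T_pI=((T_pI)^\omega)^\omega$) but does not annihilate $T_pI$ (it does not kill $w$). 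This $\xi_0$ is the infinitesimal model for the differential of the $H$ I want.

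To globalize this model I pass to a normal form. The function $q\mapsto \mathrm{rank}(\omega|_{T_qI})$ is lower semicontinuous, so the set where it attains its maximal value $2s$ is open; since $I$ is not isotropic $2s\geq 2$, and I choose $p$ in this open set, so that $\omega|_{TI}$ has constant rank near $p$. By the Marle--Weinstein constant--rank normal form there are Darboux coordinates near $p$ in which $I$ is a product of a symplectic $\mathbb{R}^{2s}$ of free directions with the null (characteristic) directions of the presymplectic form $\omega|_I$. I then take $H$ to be a cutoff function times a function of the $\mathbb{R}^{2s}$--coordinates alone, chosen non-constant and with differential equal to $\xi_0|_{T_pI}$ at $p$. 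Being independent of the null coordinates, $H|_I$ is constant along the characteristic directions, so $dH|_I$ annihilates $T_pI\cap(T_pI)^\omega$; prescribing the normal part of the $1$--jet of $H$ appropriately then makes $dH$ annihilate all of $(T_pI)^\omega$ along $I$, so that $\phi_H$ preserves $I$. At the same time $H|_I$ is non-constant, so it is not a function of time. Cutting off near $p$ produces the required compactly supported Hamiltonian.

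The forward direction is immediate; the genuine work is the construction in the converse, and this is where I expect the main obstacle. The delicate point is to keep the flow \emph{exactly} tangent to $I$ (so that $I$ is truly invariant) while keeping $H|_I$ non-constant: tangency forces $H$ to be constant along the characteristic (null) leaves of $(I,\omega|_I)$, so the non-constancy must be carried entirely by the transverse, symplectic directions. This is precisely what the constant--rank normal form organizes, by separating the symplectic and null directions of $I$. The only additional care needed is to select $p$ where the rank of $\omega|_{TI}$ is locally constant and to cut off for compact support without destroying tangency along $I$.
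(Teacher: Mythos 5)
Your forward implication is correct and is the standard symplectic-linear-algebra argument the paper has in mind: invariance gives $X_{H_t}$ tangent to $I$ along $I$, and then $d(H_t)_p(v)=\omega(X_{H_t}(p),v)=0$ for all $v\in T_pI$ because $\omega$ vanishes on $T_pI\times T_pI$. (The paper leaves both directions to the reader, so there is no official argument to match against.)

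The converse, however, has a genuine gap, and it sits exactly at the step you flagged as needing ``additional care'': the cutoff. Your own reformulation shows that invariance of $I$ under $\phi_H$ forces $H|_I$ to be constant along the plaques of the null distribution $TI\cap(TI)^\omega$. In the constant-rank normal form these plaques are the affine slices in the null coordinates $u$, and they run all the way across the chart $V$. Consequently any $H$ compactly supported in $V$ whose flow preserves $I$ must vanish identically on $I\cap V$: it is constant on each plaque and equal to $0$ near the ends of every plaque. Concretely, for $H=\chi\cdot g(z)$ the offending term is $g\,d\chi$, whose evaluation on $\partial_u$ cannot vanish on $\{g\neq 0\}\cap I$ unless $\chi$ is independent of $u$ there, contradicting compact support; no choice of cutoff repairs this. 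Your recipe therefore only succeeds when the maximal rank of $\omega|_{TI}$ equals $\dim I$, i.e.\ when $I$ is symplectic somewhere; it fails for the most basic non-isotropic examples, such as hypersurfaces, where the null distribution is nowhere trivial. The honest fix is to abandon compact support and prove a localized statement (an $H$ defined only on the normal-form chart, equal to $g(z)$ with no cutoff in the null directions, whose \emph{local} flow preserves $I\cap V$ for small times) --- this is precisely how the paper phrases its Lemma \ref{lemm:charact-smooth-coiso} in the body of the text. Be warned that the global statement you set out to prove by contraposition can genuinely fail: if the characteristic leaves of $I$ are dense (for instance $I=S^*\Sigma$, the unit cotangent bundle of a closed hyperbolic surface, whose characteristics are the orbits of a topologically transitive geodesic flow), then every globally defined Hamiltonian preserving $I$ is automatically constant on $I$ even though $I$ is not isotropic; so no construction, however clever, closes this gap without localizing the statement.
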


\begin{lemma}\label{lem:coiso_charact}
Let $C$ denote a (smooth) submanifold of a symplectic manifold $(M, \omega)$.  The following are equivalent: 
\begin{itemize}
\item $C$ is coisotropic,
\item  For every smooth Hamiltonian $H$, if $H|_C$ is a function of time only, then $\phi_H$ preserves $C$.
\end{itemize}
\end{lemma}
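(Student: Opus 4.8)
The plan is to reduce both implications to a single pointwise computation in symplectic linear algebra, with no input from the $C^0$--theory. For a smooth time-dependent Hamiltonian $H$ and a point $p\in C$, the condition that $H_t|_C$ be a function of time only means exactly that, for each $t$, the covector $dH_t(p)$ annihilates $T_pC$, i.e. $dH_t(p)\in (T_pC)^0$, the annihilator. Under the isomorphism $\flat\co T_pM\to T_p^*M$, $v\mapsto \iota_v\omega$, which sends $X_{H_t}(p)$ to $\pm dH_t(p)$ and carries $(T_pC)^\omega$ onto $(T_pC)^0$, this is equivalent to $X_{H_t}(p)\in (T_pC)^\omega$. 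So the first thing I would record is the dictionary: \emph{$H_t|_C$ is (for each $t$) constant on $C$ if and only if $X_{H_t}(p)\in (T_pC)^\omega$ for every $p\in C$.}

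For the implication ``$C$ coisotropic $\Rightarrow$ property'', assume $H_t|_C$ is a function of time. By the dictionary and coisotropy, $X_{H_t}(p)\in (T_pC)^\omega\subset T_pC$ for all $p\in C$ and all $t$, so $X_{H_t}$ is tangent to $C$ along $C$. The restriction $X_{H_t}|_C$ is then a well-defined time-dependent vector field on $C$; its integral curve through a point $p\in C$ remains in $C$ and, by uniqueness of solutions to the defining equation $\dot x=X_{H_t}(x)$, coincides with $t\mapsto \phi_H^t(p)$ wherever the latter is defined. Hence $\phi_H^t$ preserves $C$.

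For the converse I would argue by contraposition. Suppose $C$ is not coisotropic, so there is $p\in C$ and $w\in (T_pC)^\omega\setminus T_pC$. Choose near $p$ defining functions $f_1,\dots,f_k$ with $k=\operatorname{codim} C$ and linearly independent differentials, so that $C=\{f_1=\cdots=f_k=0\}$ locally. Since $\{df_i(p)\}$ is a basis of $(T_pC)^0$, the vectors $\{X_{f_i}(p)\}$ form a basis of $(T_pC)^\omega$, and I can write $w=\sum_i c_i X_{f_i}(p)$. Multiplying $\sum_i c_i f_i$ by a cutoff $\chi$ that equals $1$ near $p$ and is supported inside the chart produces a globally defined autonomous Hamiltonian $H=\chi\sum_i c_i f_i$ that vanishes identically on $C$ (on $\operatorname{supp}\chi$ because the $f_i$ vanish there, elsewhere because $\chi=0$), while, since $f_i(p)=0$ and $\chi\equiv 1$ near $p$, one still has $X_H(p)=\sum_i c_i X_{f_i}(p)=w\notin T_pC$. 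Then $\tfrac{d}{dt}\big|_{t=0}\phi_H^t(p)=w$ is transverse to $C$, so $\phi_H^t(p)\notin C$ for small $t>0$ and $\phi_H$ does not preserve $C$, contradicting the assumed property.

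The delicate points, and the ones I would state most carefully, are the two ``globalization/uniqueness'' steps rather than any geometry: in the forward direction, justifying that pointwise tangency of the time-dependent vector field forces the flow to preserve $C$ (a standard consequence of ODE uniqueness, but one must note that $C$ need not be closed, so the honest conclusion is that orbits starting on $C$ stay on $C$ as long as they are defined); and in the converse, arranging the cutoff so that $H$ vanishes on all of $C$ while leaving $X_H(p)$ untouched. Both become routine once the linear-algebraic dictionary $\flat\big((T_pC)^\omega\big)=(T_pC)^0$ is in place, which is really the crux of the lemma.
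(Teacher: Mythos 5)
The paper never writes this proof out (it is explicitly ``left to the reader''), so the comparison is with the intended symplectic-linear-algebra argument and with its close cousin in the main text, Lemma \ref{lemm:charact-smooth-coiso}, whose proof is written out. Your dictionary $\flat\big((T_pC)^\omega\big)=(T_pC)^0$ and your contrapositive argument for ``property $\Rightarrow$ coisotropic'' --- defining functions $f_1,\dots,f_k$, a cutoff, and the fact that the $X_{f_i}(p)$ span $(T_pC)^\omega$ --- are exactly that argument, and this is the half of the equivalence the appendix actually uses. That part is correct.

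The forward implication as you wrote it has a genuine gap, which you brush against in your final paragraph but do not close. Tangency of $X_{H_t}$ to $C$ plus ODE uniqueness gives that the integral curve of $X_{H_t}|_C$ through $p$ agrees with $t\mapsto\phi_H^t(p)$ \emph{on the interval where the former is defined} --- not ``wherever the latter is defined'', as you assert. If $C$ is not closed as a subset, the restricted flow can reach the edge of $C$ in finite time while the ambient flow continues, and then $\phi_H$ does not preserve $C$: take $M=\R^2$, $C=\{(x,0)\,:\,0<x<1\}$ (a Lagrangian, hence coisotropic), and $H=y$; then $H|_C=0$ but $\phi_H^t(x,0)=(x+t,0)$ exits $C$. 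So the equivalence as stated is actually false for general submanifolds, and the honest fix is to add the hypothesis that $C$ is closed as a subset (equivalently, properly embedded --- consistent with the paper's stated convention and with its applications, where $C$ is the zero section of $T^*L$ for $L$ closed, or the compact set $\theta(L_0)$). With that hypothesis, the standard argument that $\{t\,:\,\phi_H^s(p)\in C \text{ for all } s\le t\}$ is nonempty, open (by local existence of the flow of $X_{H_t}|_C$ and uniqueness) and closed (since $C$ is closed and the flow is continuous) completes the forward direction. State the closedness hypothesis and run that open-closed argument, and your proof is complete.
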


We leave the proofs of the above lemmas, which follow from symplectic linear algebra, to the reader.  In the proof of Theorem \ref{theo:smooth-C0-coiso-is-coiso}, we use Lemma \ref{lemm:charact-smooth-coiso} which is a variation of the second of the above two lemmas.

\begin{proof}[Proof of Theorem \ref{theo:special_case_smooth_Lag}]
Each of Lemmas \ref{lem:iso_charact} and \ref{lem:coiso_charact} gives a different proof.  We provide both proofs here.

\noindent \textbf{First proof: } Suppose that $H$ is any smooth Hamiltonian whose flow $\phi_H$ preserves $L'$.  Then $H\circ\theta \in C^0_{\Ham}$ and its flow, $\theta^{-1} \phi^t_H \theta$, preserves $L_0$.  It follows from Theorem \ref{theo:special_case_Lag_uniq} that the restriction of $H\circ \theta$ to $L_0$ is a function of time only.  Therefore, $H|_{L'}$ depends on time only, and so using Lemma \ref{lem:iso_charact} we conclude that  $L'$ is isotropic.

\noindent \textbf{Second proof: } Suppose that $H$ is any smooth Hamiltonian whose restriction to $L'$ is a function of time only.  Then $H\circ\theta \in C^0_{\Ham}$ and its restriction to $L_0$ depends only on time.  It follows from Theorem \ref{theo:special_case_Lag_uniq} that the flow of $H\circ \theta$, which is  $\theta^{-1} \phi^t_H \theta$, preserves $L_0$ and so the flow of $H$ preserves $L'$.  Using Lemma \ref{lem:coiso_charact} we conclude that  $L'$ is coisotropic.
\end{proof}

\begin{proof}[Proof of Theorem \ref{theo:special_case_Lag_uniq}]
 % By replacing $H$ with $H \circ \theta$ we may prove the statement for the (smooth) Lagrangian $L_0$, rather than for $L'$.  Here, we use the fact that $ \phi^t_{H\circ \theta} = \theta^{-1} \phi^t_H \theta$.  Hence, we will prove that \textit{the restriction of $H \in C^0_\Ham$ to $L_0$ is a function of time if and only if $\phi_H$ preserves $L_0$.}

\medskip
To prove the direct implication suppose that $H_t|_{L_0} = c(t)$, where $c(t)$ is a function of time only. 
 % By replacing $H_t$ with $H_t-c(t)$ and then cutting of at infinity, we may assume that $H_t|_{L_0} = 0$ for all $t$.  
For a contradiction assume that $\phi_H$ does not preserve $L_0$, then for some $t_0$ we have $\phi^{t_0}_H(L_0) \not \subset L_0$, and after the time reparametrization $t \mapsto t_0t$ we may assume that $t_0=1$, that is, $\phi_H^1(L_0)\not\subset L_0$.

Since $H \in C^0_{\Ham}$ there exists a sequence of smooth Hamiltonians $H_i \co [0,1] \times M \to \mathbb{R}$  such that $H_i$ converges uniformly to $H$ and $\phi_{H_i}$ converges to $\phi_H$ in $C^0$--topology. 

Because $\phi^1_H(L_0) \not \subset L_0$, there exists a ball $B$ such that $B \cap L_0 \neq \emptyset$ and $\phi^1_H(B) \cap L_0 = \emptyset$.  It follows that $\phi^{-1}_{H_i}(L_0) \cap B = \emptyset$ for large $i$. And so,
 $$\gamma(\phi_{H_i}^{1}(L_0), L_0)=\gamma(\phi_{H_i}^{-1}(L_0),L_0)  \geq \chzrel(B;L_0) >0.$$
Inequality \eqref{eq:gamma-bdd-osc-on-L} from Section \ref{sec:lagr-spectr-invar} implies that
$$\max_{t\in[0,1]} (\osc(H_i(t,\cdot) |_{L_0}))\geq \chzrel(B;L_0),$$ contradicting the fact that $H|_{L_0}$ is a function of time.  We conclude that $\phi_H$ preserves $L_0$.

Next, to prove the converse implication suppose that $\phi_H$ preserves $L_0$.
We will show that $H(0,\cdot)|_{L_0}$ is constant.  A time reparametrization argument, where $H(t,x)$ is replaced with $\tilde{H}(t,x) = (1-s) H(s + (1-s)t, x)$,  would then show that $H(s, \cdot)|_{L_0}$ is constant for any choice of $s \in [0,1)$.  This in turn would imply that $H|_{L_0}$ is a function of time.

Let $B$ denote an open ball intersecting $L_0$ and $U$ a small open neighborhood of $\overline B$, such that $L_0\setminus \pi(U)$ has a non-empty interior, where $\pi \co T^*L\to L_0$ is the natural projection. (Picking $U$ in this way enables us to apply Lemma \ref{lemm:haus_cont_gamma}.) Let $\psi$ be any symplectomorphism supported in $B$ and preserving $L_0$. Next, we pick $\varepsilon > 0$ such that $\phi^t_H(B), (\phi^t_H)^{-1} (B) \Subset U$ for all $t \in [0,\varepsilon]$. By a reparametrization in time, where $H(t,x)$ is replaced with $\varepsilon H(\varepsilon t, x)$, we may assume that
 $\phi^t_H(B)$, $(\phi^t_H)^{-1} (B) \Subset U$ for all $t \in [0,1]$.

 Consider the $C^0$--Hamiltonian $G = (H \circ \psi - H)\circ \phi_H$. We will now show that $G|_{L_0}=0$.   The support of $G$ is included in $\cup_{t \in [0,1]} (\phi_H^t)^{-1}(B) \subset U$, and moreover, its flow is $(\phi_H^t)^{-1}\psi^{-1}\phi_H^t\psi$.  Because $\psi$ and $\phi_H$ preserve $L_0$ the flow of $G$ also preserves $L_0$.

Since $G \in C^0_{\Ham}$ there exist smooth Hamiltonians $G_i$  such that $\{G_i\}$ converges uniformly to $G$ and $\{\phi_{G_i}\}$ converges to $\phi_G$.  Furthermore, we can require that all $G_i$'s are supported in $U$.  This can be achieved by picking a corresponding sequence of smooth Hamiltonians $H_i$ for $H$ and defining $G_i = (H_i \circ \psi - H_i)\circ \phi_{H_i}$.  For large $i$, $G_i$ is supported in $U$. 

Fix a small $r>0$.  Because $\phi^t_G(L_0) = L_0$ for any $t \in [0,1],$ for sufficiently large $i$ we have $\phi^t_{G_i}(L_0) \subset T^*_rL_0$.
Furthermore,  the Hamiltonians $G_i$ are all supported in $U$ and hence we can apply Lemma \ref{lemm:haus_cont_gamma} and conclude that $\gamma(\phi^1_{G_i}(L_0), L_0) \leq C r $, i.e $\gamma(\phi^1_{G_i}(L_0), L_0)  \to 0$.  Of course, the same reasoning yields $\gamma(\phi^t_{G_i}(L_0), L_0)  \to 0$ for all $t\in [0,1].$  Then, Proposition \ref{coro:spectral-conv-implies-constant} implies that $G|_{L_0} = c(t)$. Since it has support in $U$, we conclude that $G|_{L_0}=0$. 

In particular, $G|_{L_0}=0$ at time 0. Now since the ball $B$ can contain any chosen pair of points $x_1$, $x_2\in L_0$, and $\psi$ can be chosen so that $\psi(x_1)=x_2$, we conclude that the restriction $H(0,\cdot)|_{L_0}$ is constant.  
\end{proof}

\bibliographystyle{abbrv}
\bibliography{biblio}

\end{document}